\newtheorem{remark}[theorem]{Remark}
\def\II{{\mathbb I}}
\def\ZZ{{\mathbb Z}}
\def\NN{{\mathbb N}}
\def\RR{{\mathbb R}}
\def\TT{{\mathbb T}}
\def\IIs{{\mathbb I}^s}
\def\NNs{{\mathbb N}^s}
\def\RRs{{\mathbb R}^s}
\def\TTs{{\mathbb T}^s}
\def\ZZs{{\mathbb Z}^s}
\def\ZZsp{{\mathbb Z}^s_+}
\def\Krs{H_{s,r}(\TTs)}
\def\tKrs{K^{r,a}(\TTs)}
\def\tUr{U^{r,a}(\TTs)}
\def\tUrn{U^{r,a}_*(\TTs)}
\def\Kw{K^{r,a}(\IIs,w)}
\def\Uw{U^{r,a}(\IIs,w)}
\def\Lw{L_2(\IIs,w)}
\def\supp{\operatorname{supp}}
\def\TTs{{\mathbb T}^s}
\def\supp{\operatorname{supp}}
\def\endproof{\hfill\vbox{\hrule height0.6pt\hbox{\vrule height1.3ex%
width0.6pt\hskip0.8ex\vrule width0.6pt}\hrule height0.6pt}}
\newcommand{\proofof}[1]{\noindent{\it Proof of #1.} \ignorespaces}
\newlength{\fixboxwidth}
\newcommand{\cF}{\mathcal F}
\newcommand{\cL}{\mathcal L}
\newcommand{\cP}{\mathcal P}
\newcommand{\IN}{\mathbb N}
\newcommand{\IR}{\mathbb R}
\newcommand{\bd}{{\bf d}}
\newcommand{\bk}{{\bf k}}
\newcommand{\bm}{{\bf m}}
\newcommand{\bt}{{\bf t}}
\newcommand{\bx}{{\bf x}}
\newcommand{\by}{{\bf y}}
\newcommand{\bj}{{\bf j}}
\newcommand{\ind}{{n}}
\definecolor{dgreen}{rgb}{0,.6,0}
\title{New explicit-in-dimension estimates for the cardinality of high-dimensional hyperbolic crosses and approximation of functions having mixed smoothness
\thanks{Alexey Chernov acknowledges the support by the Hausdorff Center of Mathematics, University of Bonn, Germany. Dinh D\~ung's research is  funded by Vietnam National Foundation for Science and Technology Development (NAFOSTED) under  Grant No. 102.01-2012.15. The authors would like to thank Erich Novak for valuable remarks and comments.
}}
\author{Alexey Chernov\thanks{Corresponding author,
Department of Mathematics and Statistics University of Reading  Whiteknights, PO Box 220 Berkshire, UK RG6 6AX ({\tt a.chernov@reading.ac.uk}).}
\and Dinh D\~ung \thanks{Corresponding author, Vietnam National University, Hanoi, Information Technology Institute, 144 Xuan Thuy, Cau Giay, Hanoi, Vietnam ({\tt dinhzung@gmail.vn}).}}
\begin{document}

\maketitle

\begin{abstract}
We are aiming at sharp and explicit-in-dimension estimations of the cardinality of $s$-dimensional hyperbolic crosses where $s$ may be large, and applications in high-dimensional approximations of functions having mixed smoothness. In particular, we provide new tight and explicit-in-dimension upper and lower bounds for the cardinality of  hyperbolic crosses. We apply them to obtain explicit upper and lower bounds for Kolmogorov $N$-widths and $\varepsilon$-dimensions in the space $L_2(\TTs)$ of a modified Korobov class $\tUr$  parametrized by positive $a$ of $s$-variate periodic functions on the $s$-torus $\TTs := [-\pi,\pi]^s$ having mixed smoothness $r$, as a function of three variables $N,s,a$ and $\varepsilon, s,a$, respectively, when $N,s$ may be large, $\varepsilon$ may be small and $a$ may range from $0$ to $\infty$. Based on these results we describe a complete classification of tractability for the problem of $\varepsilon$-dimensions of $\tUr$. In particular, we prove the introduced exponential tractability of this problem for $a>1$.
All of these methods and results are also extended to high-dimensional approximations of non-periodic functions in the weighted space $L_2([-1,1]^s,w)$ with the Jacobi weight $w$  by Jacobi polynomials with powers in hyperbolic crosses.
\end{abstract}

\begin{keywords} 
Hyperbolic cross, high-dimensional approximation, $N$-widths, $\varepsilon$-dimensions, tractability, exponential tractability.
\end{keywords}

\begin{AMS}
 41A25; 41A46; 41A63; 42A10;. 
\end{AMS}

\pagestyle{myheadings}
\thispagestyle{plain}
\markboth{ALEXEY CHERNOV AND DINH D\~UNG}{HD HYPERBOLIC CROSSES AND APPROXIMATION OF FUNCTIONS}

\section{Introduction} \label{sec:intro}

~\newline
The recent decades have been designated by an increasing interest in numerical approximation of problems in high dimensions, in particular problems involving high-dimensional input and output data depending on a large number $s$ of variables. 
They naturally appear in a vast number of applications in Mathematical Finance, Chemistry, Physics (e.g. Quantum Mechanics), Meteorology, Machine Learning, etc. 
Typically, a numerical solution of such problems to the target accuracy $\varepsilon$ demands for a high exponentially increasing computational cost $\varepsilon^{-\delta s}$ for some $\delta >0$, so that numerical computations even for a moderate values of $\varepsilon$ will result in an unacceptably large computation times and memory requirements. This phenomenon is called {\em the curse of dimensionality}, a term suggested by Bellmann \cite{Be57} (in the context of our paper term ``high-dimensional'' refers to the number of variables $s\gg1$). This consideration is true in general, but in some cases {\em the curse of dimensionality} can be overcome, particularly when the high-dimensional data belong to certain classes of functions having mixed smoothness. Such functions can be optimally represented by means of the {\em hyperbolic cross}  (HC) approximation. For example,
trigonometric polynomials with frequencies in HCs have been
widely used for approximating functions with a bounded mixed derivative or
difference. These classical trigonometric HC approximations date back to
Babenko \cite{Ba60a}. For further sources
on HC approximations in this classical context we refer to 
\cite{DD86,Te93} and the references therein. 
Later on, these terminologies were
extended to approximations by wavelets \cite{DeKoTe98, SU09}, by B-splines
\cite{DD10,SU10}, and specially to algebraic polynomial approximations where the power of algebraic polynomials approximants are in HCs.
\cite{C11,CS09}. HC approximations have applications
in quantum mechanics and PDEs \cite{Ys04,Ys05,Ys10, GH10b},
finance \cite{Ge07}, numerical solution of stochastic PDEs
\cite{C11,CS09,CS13,ST03a,ST03b}, and
data mining \cite{Ga01} to mention just a few
(see also the surveys \cite{BG04} and \cite{GeGr08} and the references
therein).

In traditional trigonometric approximations of functions having a mixed smoothness, there are two kinds of HC used as the frequency domain of approximant trigonometric polynomials: {\em continuous HCs}  
\begin{equation} 
G(s,T) := \bigg\{\bk \in \ZZs: \prod_{i=1}^s \max(|k_i|,1) \leq T\bigg\}
\end{equation}
and {\em step HCs} formed from the dyadic blocks $\square_\bj$
\begin{equation} 
G^*(s,T) := \bigg\{\bigcup \square_\bj: \bj \in \ZZsp, \ \sum_{i=1}^s j_i \le \log T\bigg\},
\end{equation}
where $\square_\bj :=  \{\bk \in \ZZs: \lfloor2^{j_i-1}\rfloor \le |k_i| < 2^{j_i}, \ i = 1,...,s\}$. (There are some modifications of these HCs for trigonometric approximations of functions having a mixed smoothness and zero mean value in each variable, see, for instance, \cite{DD86,Te93}.) These HCs having the cardinality $\asymp T \log^{s-1} T$, play an important role in computing asymptotic orders of various characteristics of optimal approximation such as $N$-widths and $\varepsilon$-dimensions for classes of periodic functions having mixed smoothness.  In this work we study approximations by trigonometric polynomials with  frequencies from modified HCs defined in \eqref{GammasTa-def} and \eqref{def[Gamma_pm(s,T,a)(2)]} below. We derive tight and explicit-in-dimension upper and lower bounds for the cardinality of these modified HCs and then apply them in study of Kolmogorov $N$-widths and $\varepsilon$-dimensions of a correspondingly  modified Korobov function class. 

Let us recall the concepts of Kolmogorov $N$-width \cite{Ko36} and its inverse $\varepsilon$-dimension, linear $N$-widths and Gel'fand $N$-widths being the most important approximation characterizations (particularly in high-dimensional approximation), which will be systematically studied in the present paper. Let $X$ be a normed space and $W$ a central symmetric subset in $X$. The Kolmogorov $N$-width $d_N(W,X)$ is defined by
\[
d_N(W,X)
:= \ 
\inf_{L_N} \ \sup_{f \in W} \ \inf_{g \in L_N} \|f - g\|_X,
\] 
where the outer inf is taken over all linear subspaces $L_N$ in $X$ of dimension $\le N$. 
The linear $n$-width $\lambda_n(W,X)$ \cite{Ti60} given by
\[
\lambda_n(W,X)
:= \ 
\inf_{\Lambda_n} \ \sup_{f \in W}  \|f - \Lambda_n(f)\|_X,
\] 
where the inf is taken over all linear operators $\Lambda_n$ in $X$ with rank $\le n$. 
The Gel'fand $N$-width $d^N(W,X)$ (see \cite{Ti76, P85}) is defined by
\[
d^N(W,X)
:= \ 
\inf_{L_{-N}} \ \inf \{\varepsilon \ge 0: W\cap L_{-N}  \subset \varepsilon \, U\},
\] 
where the outer inf is taken over all linear subspaces $L_{-N}$ in $X$ of codimension $\le N$, and $U$ is the unit ball in $X$. 
 There is a vast amount of literature on optimal approximations and these $N$-widths, see \cite{Ti76}, \cite{P85}, in particular, for $s$-variate function classes \cite{Te93}. 

In computational mathematics, the so-called $\varepsilon$-dimension
$n_\varepsilon = n_\varepsilon(W,X)$ is used to quantify the computational complexity. This approximation characteristic is defined as the inverse of $d_N(W,X)$.
In other words, the quantity $n_\varepsilon(W,X)$ is  the minimal number
$n_\varepsilon$ such that the approximation of $W$ by a suitably
chosen approximant $n_\varepsilon$-dimensional subspace $L$ in $X$ gives the approximation error 
$\le \varepsilon$ (see \cite{DD79}, \cite{DD80}, \cite{DD91}).  
The quantity $n_{\varepsilon}$ represents a special case of the information
complexity which is described by  the minimal number $n(\varepsilon,s)$ 
of information needed to solve the corresponding $s$-variate linear approximation problem of the identity operator within error
$\varepsilon$ (see \cite[4.1.4, 4.2]{NW08}). For further information on this topic we refer the interested reader to the surveys in monographs \cite{NW08,NW10} and the references therein. In what follows we address optimal
approximation of functions from the unit ball of a modified Korobov space $\tKrs$  in the Hilbert space $L_2(\TTs)$ for which the Komorogov $N$-width,  Gel'fand $N$-width and the linear $N$-width coincide (see \eqref{eq[three-widths]} below) and therefore, so do the quantities $n_{\varepsilon}$ and $n(\varepsilon,s)$. 

The task of an efficient numerical approximation (for example, numerical solution of a linear, high-dimensional elliptic PDE by the finite element method) raises naturally the question of the optimal selection of the approximation (finite element) subspace with $N$ degrees of freedom.
Recalling the above terminology, this reduces to 
the problem of optimal linear approximation in $X$ of
functions from $W$ by linear $N$-dimensional subspaces,  Kolmogorov $N$-widths $d_N(W,X)$ and $\varepsilon$-dimension $n_\varepsilon(W,X)$, where $W$ is a smoothness class of functions having in some sense more regularity than $X\supset W$.
In the present work, the regularity of the class $W$ will be measured by $L_2$-boundedness of mixed derivatives sufficiently of higher order. 
Finite element approximation spaces based on HC frequency domains are suitable for this framework \cite{DU13} (cf. also \cite{BG04}). 

As a model we will consider functions on $\RRs$ which are $2\pi$-periodic in each variable, as
functions defined on the $s$-dimensional torus 
$\TTs := [-\pi, \pi]^s$ for which the end points of the interval $[-\pi,\pi]$ are identified for each coordinate component.
The space $H^{r{\bf 1}}(\TTs) := H^r(\TT) \otimes \dots \otimes H^r(\TT)$ consists of all periodic functions whose mixed derivatives of order $r>0$ are $L_2$-integrable (i.e. having mixed smoothness  $r$).
For the unit ball 
$U^{r{\bf 1}}(\TTs)$ of space $H^{r{\bf 1}}(\TTs)$
the following
well-known estimates hold true:
\begin{equation} \label{ineq[d_N(H^{alpha},L)]}
\begin{split}
A(r,s) N^{- r} (\log N)^{r (s-1)}
\ \le \
d_N(U^{r{\bf 1}}, L_2(\TTs))
\ \le \
A'(r,s)N^{- r} (\log N)^{r (s-1)},
\end{split}
\end{equation} 
or equivalently,
\begin{equation} \label{ineq[n_e(H^{alpha{1}},L)]}
B(r,s) \varepsilon^{- 1/r} |\log \varepsilon|^{(s-1)/r }
\ \le \
n_{\varepsilon}(U^{r{\bf 1}}, L_2(\TTs))
\ \le \
B'(r,s)\varepsilon^{- 1/r} |\log \varepsilon|^{(s-1)/r },
\end{equation} 
Here, $A(r,s)$, $A'(r,s)$ and $B(r,s)$, $B'(r,s)$ denote certain constants depending on the smoothness $r$ and the dimension $s$ which are usually not
computed explicitly.
The inequalities \eqref{ineq[d_N(H^{alpha},L)]}
were proved by Babenko \cite{Ba60a} already in 1960 for the basic linear
approximation by continuous HC spaces of trigonometric polynomials.
These estimates are quite satisfactory if $s$ is small and fixed.   

In the recent work of Dinh D\~ung and Ullrich \cite{DU13}, 
$A(r,s)$, $A'(r,s)$ and $B(r,s)$, $B'(r,s)$ have been estimated from above and below explicitly in $s$ when $s$ is large. In their paper, the class $U^{r{\bf 1}}(\TTs)$ is redefined in terms of the traditional dyadic decomposition of the frequency domain. These estimations are based on an approximation by trigonometric polynomials with frequencies in step HCs $G^*(s,T)$ and explicit-in-dimension estimations of $|G^*(s,T)|$. 
However, the authors were able to estimate them only for very large $n \ge 2^s$ or very small 
$\varepsilon \le 2^{- \delta s}$, $\delta > 0$, (see \cite[Thms. 4.10, 4.11]{DU13}). This does not give a complete picture of the convergence rate in high-dimensional settings. 
The reason is that the step HC approximations of the class  $U^{r{\bf 1}}$ involve whole dyadic blocks of frequencies which have the cardilnality at least $2^s$.

In the present paper, to avoid this fact we suggest to replace $H^{r{\bf 1}}(\TTs)$ by another space $\tKrs$ which is defined as a modification of the well-known Korobov space, and construct appropriate continuous HCs for the trigonometric approximations of functions from this space. This will allow to derive very tight and explicit-in-dimension upper and lower estimates for the cardinality of continuous HCs and further sharp estimates for Kolmogorov $N$-widths and $\varepsilon$-dimensions. Along with the smoothness and dimension indices $r$ and $s$, the norms on spaces $\tKrs$ will be parametrized by a positive number $a>0$ allowing for a general and sharp estimation.

As usual, $L_2(\TTs)$ denotes the Hilbert space of functions on $\TTs$ equipped
with the inner product
\begin{equation}
(f,g) 
:= \
(2\pi)^{-s} \int_{\TTs} f(\bx) \overline{g(\bx)} \, d\bx.
\end{equation}
The norm in $L_2(\TTs)$ is defined as $\|f\| := (f,f)^{1/2}$. For $\bk \in \ZZ^s$, let
$\hat{f}(\bk): = (f, e_{\bk})$ be the $s$th Fourier coefficient 
of $f$, where $e_{\bk}(\bx):= e^{i(\bk,\bx)}$. 

Let us introduce spaces $\tKrs$.  For a given $r \ge 0$, $a>0$ and a
vector $\bk \in \ZZs$ we define the scalar $\lambda_a(\bk)$ by
\begin{equation*}
\lambda_a(\bk)
 := \
\prod_{j = 1}^s \, \lambda_a(k_j),
\quad
\lambda_a(k_j)
 := \
(a + |k_j|)
\end{equation*}
and the Korobov function $\kappa_{r,a}$ (in distributional sense) at $\bx\in\TTs$ by
the relation
\begin{equation}
\kappa_{r,a}(\bx) \ := \ \sum_{\bk \in \ZZs}
\lambda_a(\bk)^{-r} \, e_{\bk}({\bx}).
\end{equation}
Then the Hilbert space $\tKrs$ is defined as
\begin{equation}
\tKrs:=\{f: f =
\kappa_{r,a}*g, \ g \in L_2(\TTs)\}
\end{equation} 
with the norm
$\|f\|_{\tKrs} \ := \|g\|$,
where $f*g$ denotes the standard convolution of functions $f$ and $g$ on $\TTs$.

The notion of space $\tKrs$ is a modification of the notion of the classical Korobov space. 
For $r > 1/2$, the kernel $K_a$ defined at $\bx$ and
$\by$ in $\TTs$ as $K_a(\bx,\by):=\kappa_{2r,a}(\bx - \by)$
is the reproducing kernel for the Hilbert space $\tKrs$. For a definitive treatment of
reproducing kernel, see, for example, \cite{Ar50}. The linear span of the set of functions 
$\{\kappa_{r,a}(\cdot - \by): \by \in \TTs\}$ is dense in $\tKrs$. In the language of Machine Learning, this means that
the reproducing kernel for the Hilbert space is universal. In the recent paper \cite{DM13} some upper and lower bounds of  multivariate approximation by translates of the Korobov function on sparse Smolyak grids has been established.

A similar notion of generalized Korobov space $\Krs$ was introduced in \cite[A.1, Appendix]{NW08}. This space is defined in the same way as the definition of $\tKrs$ by replacing the scalar $\lambda_a(\bk)^r$ 
by the scalar $\varrho_{s,r}(\bk)$ depending on two parameters $\beta$ and $\beta_1$.
Korobov spaces and their modifications are important for the study of approximation and computation problems of smooth multivariate periodic functions, especially in high-dimensional settings. For further information,
see detailed surveys  and references in the books \cite{Te93}, \cite{SJ94}, \cite{NW08}. 

Note that the spaces $H^{r{\bf 1}}(\TTs)$, $\tKrs$ and $\Krs$
coincide as function spaces equipped with equivalent norms.
However, if $s$ is large the unit balls with respect to the norms of these spaces differ
significantly. 
As will be shown in the present paper, for  the space $\tKrs$, the scaling parameter $a$ defining different equivalent norms, controls the ``size'' of the unit ball of $\tKrs$ as a subset in $L_2(\TTs)$, and determines crucially different high-dimensional approximation properties of functions from $\tKrs$.

Let $\tUr$ be the function class defined as the unit ball in $\tKrs$. 
In the present paper, we  compute $d_N(\tUr,L_2(\TTs))$ and $n_\varepsilon(\tUr,L_2(\TTs))$ explicitly in $N,s,a$ and $\varepsilon,s,a$, respectively. The core of our theory in both, periodic and non-periodic settings, is based upon sharp cardinality estimates for the index sets $\Gamma(s,T,a)$ and $\Gamma_\pm(s,T,a)$ defined as follows. 
For $s \in \NN$ and $a > 0$ and $T>0$, we consider the sets of multi-indices $\bk = (k_1,\dots, k_s)$ and set
\begin{equation} \label{GammasTa-def}
\Gamma(s,T,a) := \ \bigg\{\bk \in \NN_0^s: \prod_{i=1}^s(k_i+a) \leq T\bigg\}
\end{equation}
and
\begin{equation} \label{def[Gamma_pm(s,T,a)(2)]}
\Gamma_\pm(s,T,a) := \ \bigg\{\bk \in \ZZs: \prod_{i=1}^s(|k_i|+a) \leq T\bigg\}
\end{equation}
which are referred as \emph{a corner and symmetric continuous HC}, or shortly {\em HC}.

Denote by $|G|$ the cardinality of the set $G$. Notice that the problem of computing $|\Gamma(s,T,a)|$ and 
$|\Gamma_\pm(s,T,a)|$ in our setting is itself interesting as a problem of a classical direction in Number Theory investigating the number of the integer points in a domain such as a ball and a sphere \cite{Vi63}, \cite{Ch63}, 
\cite{HB99}, a hyperbolic domain \cite{Le96}, \cite{DR98}, \cite{Du88}, \cite{Go84}, \cite{DD84}, etc.  Specially, in 
\cite{DD84} the convergence rate of cardinality of the intersection of HCs was computed and applied to estimations of $d_N(W,L_2(\TTs))$ where $W$ is a class of several $L_2(\TTs)$-bounded mixed derivatives. 

Motivated by all the above arguments,  the main goal of this paper to prove upper and lower bounds for $|\Gamma(s,T,a)|$ and $|\Gamma_\pm(s,T,a)|$ in a {\em new high-dimensional approach} as a functions of three variables $s,T,a$ when $s,T$ may be large and $a$ ranges from $0$ to $\infty$, and then to apply to the problem of estimations of $d_N(\tUr,L_2(\TTs))$ and $n_\varepsilon(\tUr,L_2(\TTs))$. In fact, as shown in this paper this problem is reduced to estimations of $|\Gamma(s,T,a)|$ and $|\Gamma_\pm(s,T,a)|$ and their inverse. More precisely, we stress on finding the accurate dependence of $|\Gamma(s,T,a)|$ and $|\Gamma_\pm(s,T,a)|$ as a function of three variables $s$, $T$ and $a$. Although $T$ is the main parameter in the study of convergence rates with respect to $T$ going to infinity, the parameters $s$ and $a$ may seriously affect this rate when $s$ is large or the positive parameter $a$ ranges through the critical value $1$. Our method of estimation is based on comparison $|\Gamma(s,T,a)|$ and $|\Gamma_\pm(s,T,a)|$ with the volume of smooth HCs $P(s,T,a)$ (see a definition in \eqref{P-def}) and  tight non-asymptotic estimates of the latter. A new crucial element in our volume-based estimation approach is that the cardinalities of $\Gamma(s,T,a)$ and $\Gamma_\pm(s,T,a)$ are compared with volumes of shifted smooth HCs properly smaller than the smallest smooth HC containing the domain $Q(s,T,a)$ (see a definition in \eqref{Q-def}) having the volume equal to $|\Gamma(s,T,a)|$ (see Section \ref{sec:def+pre}). 
 This shift is made possible by introduction of the parameter $a$ which is new in the present work.

As a by-product of our analysis, we prove
that the volume of smooth HCs $P(s,T,a)$ can be reduced to the $s$th remainder of the Taylor series of $\exp(-t)$, which can be tightly estimated by 
\begin{equation} \nonumber
\frac{t^s}{(s-1)!(t+s)}  < (-1)^s \sum_{k=s}^\infty \frac{(-t)^k}{k!} < \frac{t^s}{(s-1)!(t+s-1)}, 
\quad s \ge 1, \ t > 0.
\end{equation}
To the knowledge of the authors, so far these estimates have been unknown. The establishing of them as well of estimates of $|\Gamma(s,T,a)|$ and $|\Gamma_\pm(s,T,a)|$ by the volume-based estimation approach requires to overcome certain technical difficulties.

The obtained results are then utilized in derivation of tight upper and lower bounds for $d_N(\tUr,L_2(\TTs))$ and $n_\varepsilon(\tUr,L_2(\TTs))$ explicit in three parameters $n,s,a$ or $\varepsilon,s,a$, as well in study of tractabilities of the problem of $n_\varepsilon(\tUr,L_2(\TTs))$. 
As mentioned above in a Hilbert space $X$, the Kolmogorov $N$-widths $d_N(W,X)$ and linear $N$-widths $\lambda_N(W,X)$ coincide, but the Gel'fand $N$-widths $d^N(W,X)$ can be smaller. However, from  
\cite[Theorem 2.2, p. 65]{P85} we have the relations 
\begin{equation} \label{eq[three-widths]}
d_N(\tUr, L_2(\TTs)) 
\ = \ 
d^N(\tUr, L_2(\TTs))
\ = \ 
\lambda_N(\tUr, L_2(\TTs)).
\end{equation}
Therefore, for our purpose it is sufficient to study the quantities
\begin{equation} \label{dNne-def}
d_N:= \ d_N(\tUr,L_2(\TTs)) \ \text{and} \ n_\varepsilon:= \ n_\varepsilon(\tUr,L_2(\TTs)).
\end{equation}
We emphasize that in general (without any additional restriction on functions to be approximated) the problem of approximation of infinite differentiable multivariate functions is intractable \cite{NW09a}. In the present paper, we introduce a new notion of tractability, 
{\em exponential tractability} and study it together with well-known tractabilities of the problem of $n_\varepsilon$. The scaling parameter $a$ is the key to the classification of tractabilities of the problem of $n_\varepsilon$ and to the control of the computational complexity. We show that  this problem for the absolute error criterion is exponentially tractable if $a>1$  weakly tractable but polynomially intractable if $a=1$, and intractable if $a<1$. More precisely, for every $q \in [2,\infty)$ satisfying 
$\lambda:= a - 2/q > 0$, and every 
$\varepsilon \in (0,1]$ and $N \in \NN$, we have  
\begin{equation} \label{introduction[n_e<]}
n_\varepsilon 
\  \leq \ 
q \lambda^{-qs}\varepsilon^{-(1+q)/r},
\end{equation}  
and correspondingly,
\begin{equation} \label{introduction[d_N<]}
d_N 
\  \leq \ 
2^rq^{r/(1 + q)}\lambda^{-[qr/(1+q)]s}N^{-r/(1+q)}.
\end{equation}
In the case of exponential tractability $a>1$, both the $N$-widths and $\varepsilon$-dimensions are decreasing exponentially in dimension $s$ when $s$ going to $\infty$. Moreover, the estimates \eqref{introduction[n_e<]} and 
\eqref{introduction[d_N<]} tell us that we can control the computational complexity of a high-dimensional approximation or reach a desirable accuracy of approximation keeping balance between three parameters $s,a,\varepsilon$ or $s,a,N$. We also prove that  the problem of $n_\varepsilon(a^{rs}\tUr,L_2(\TTs))$ for the normalized error criterion is weakly tractable but polynomially intractable for every $a>0$. The tractability of linear approximation problem for the generalized Korobov space $\Krs$ for various values of parameters $\beta_1$ and $\beta$ was studied in \cite[Pages 184--185]{NW08}. 

In the next step of our investigation, these estimates will be sharpened by other upper and lower bounds for $n_\varepsilon$ and $d_N$. In particular, 
for $r >  0$, $s \ge 2$, $a > 1/2$, we prove for every $\varepsilon \in (0,[a-1/2]^{-sr})$,
\begin{equation} \nonumber
n_\varepsilon 
\  \leq \ 
\frac{2^s \varepsilon^{-1/r} (\ln \varepsilon^{-1/r} - s\ln (a-1/2))^s }
{(s-1)!\big(\ln \varepsilon^{-1/r} + s\big(1-\ln (a-1/2)) - 1\big)},
\end{equation}
and  for every  $\varepsilon \in (0,[a+1/2]^{-sr})$,
\begin{equation} \nonumber
n_\varepsilon 
\  \geq \ 
\frac{2^s \varepsilon^{-1/r}  (\ln \varepsilon^{-1/r}  - s\ln (a+1/2))^s}
{(s-1)!(\ln \varepsilon^{-1/r}  + s(1-\ln (a + 1/2))} \ - \ 1,
\end{equation}
and corresponding upper and lower bounds for $d_N$ (Theorem \ref{theorem[<d_N<]}). 

All of these methods and results are extended to HC approximations  of functions from the non-periodic modified Korobov space $\Kw$ in the weighted space $\Lw$ with the Jacobi weight $w$ by Jacobi polynomials with powers in the corner HC $\Gamma(s,T,a)$, where $\IIs:= [-1,1]^s$.

In brief, the paper is organized as follows. In Section \ref{sec:def+pre}, we give some preliminary estimates of $|\Gamma(s,T,a)|$ and $|\Gamma_\pm(s,T,a)|$ via the volume of corresponding smooth HCs. 
In Section 
\ref{sec:non-asymp}, we prove the non-asymptotic estimates for $|\Gamma(s,T,a)|$ and $|\Gamma_\pm(s,T,a)|$.
 In Section \ref{Tractabilities}, we prove some upper estimates for $d_N$ and  $n_\varepsilon$, and investigate tractabilities of the problem of  $n_\varepsilon$. 
In Section \ref{widths and dimensions}, we  give  sharpened  upper and lower bounds for $n_\varepsilon$ and $d_N$.
In Section \ref{Non-periodic} we extend the methods and results for periodic approximations to non-periodic  approximations.

\section{Preliminary estimates via the volume of smooth HCs} \label{sec:def+pre}

~\newline
A natural way to estimate $|\Gamma(s,T,a)|$ and $|\Gamma_\pm(s,T,a)|$ is to compare them with the volume of the corresponding {\em corner smooth HC}
\begin{equation}\label{P-def}
P(s,T,a') :=  \big\{\bx \in \RR^s_+: \prod_{j=1}^s (x_j+a') \leq T \big\}
\end{equation}
which is a bounded domain in $\RRs$,  where $a'$ may be not equal to $a$. 

Consider the  set
\begin{equation}\label{Q-def}
Q(s,T,a) := \bigcup_{\bk \in \Gamma(s,T,a)} (\bk + [0,1]^s).
\end{equation}
Obviously, it holds that
\begin{equation} \label{Gamma=IntQ}
|\Gamma(s,T,a)| = \int_{Q(s,T,a)} \bd \bx.
\end{equation}
Using this equation we will compare $|\Gamma(s,T,a)|$ with the volume of $P(s,T,a)$
\begin{equation}\label{I-def}
I(s,T,a') := \int_{P(s,T,a')} \bd \bx.
\end{equation}

Put $\lfloor \bx \rfloor := (\lfloor x_1 \rfloor,..., {\lfloor x_s \rfloor})$ for $\bx \in \RRs$ where 
$\lfloor t \rfloor$ denotes the integer part of $t \in \RR$. The following lemma gives rough bounds of $|\Gamma(s,T,a)|$ via the volume $I(s,T,a)$ based on direct set inclusions.

\begin{lemma} \label{lem:PQ-incl}
For every $s \in \IN$, $T>0$, and $a>0$, there hold the inclusions
\begin{equation} \label{PQ-incl}
Q(s,T,a+1) 
\ \subsetneq \ 
P(s,T,a)
\ \subsetneq \ 
Q(s,T,a) 
\end{equation}
and consequently,
\begin{equation}\label{IGamma-bnd1}
|\Gamma(s,T,a+1)|
\ < \ 
I(s,T,a)
\ < \ 
|\Gamma(s,T,a)|.
\end{equation}
\end{lemma}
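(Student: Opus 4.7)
The plan is to verify the two inclusions directly from the definitions and then deduce the measure inequality using the identity $|\Gamma(s,T,a')| = \int_{Q(s,T,a')} \bd\bx$ from \eqref{Gamma=IntQ}.

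For the right-hand inclusion $P(s,T,a) \subseteq Q(s,T,a)$, I would take an arbitrary $\bx \in P(s,T,a) \subset \RR^s_+$ and set $\bk := \lfloor \bx \rfloor \in \NN_0^s$. Since $k_i \leq x_i$ for every $i$, I get $k_i + a \leq x_i + a$ and hence $\prod_{i=1}^s (k_i+a) \leq \prod_{i=1}^s (x_i+a) \leq T$, which means $\bk \in \Gamma(s,T,a)$ and so $\bx \in \bk + [0,1]^s \subseteq Q(s,T,a)$.

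For the left-hand inclusion $Q(s,T,a+1) \subseteq P(s,T,a)$, I would fix $\bx \in Q(s,T,a+1)$, pick $\bk \in \Gamma(s,T,a+1)$ with $\bx \in \bk+[0,1]^s$, and use $x_i \leq k_i+1$ to obtain $x_i + a \leq k_i + (a+1)$, so that $\prod_{i=1}^s (x_i+a) \leq \prod_{i=1}^s (k_i+a+1) \leq T$, giving $\bx \in P(s,T,a)$. In both arguments the critical structural observation is the one-to-one match between a unit shift of the lattice coordinate and a unit shift of the weight parameter~$a$; this is why the inclusion sandwich shifts $a \mapsto a+1$ on the left but not on the right.

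To promote the inclusions to strict ones, I would argue that the outer boundary of $P(s,T,a)$ is the smooth $(s{-}1)$-dimensional algebraic surface $\prod(x_i+a)=T$, so it slices the cubic cells of the integer grid non-trivially whenever the domain is non-degenerate. Concretely, once $a^s < T$ one can locate a cell $\bk+[0,1]^s$ whose interior is straddled by that surface: the top corner $\bk+\bone$ produces $\prod(k_i+a+1)$ which either exceeds $T$ (so the cell is not contained in $P$ and supplies a point of $Q(s,T,a)\setminus P(s,T,a)$ of positive measure) or equals a cell whose bottom corner $\bk \in \Gamma(s,T,a)\setminus \Gamma(s,T,a+1)$ supplies a neighborhood of points in $P(s,T,a)\setminus Q(s,T,a+1)$ of positive measure.

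The chain of strict measure inequalities then follows immediately: by \eqref{Gamma=IntQ}, $|\Gamma(s,T,a+1)|$ and $|\Gamma(s,T,a)|$ equal $\vol(Q(s,T,a+1))$ and $\vol(Q(s,T,a))$ respectively, while $I(s,T,a)=\vol(P(s,T,a))$ by \eqref{I-def}, and the strict set inclusions with positive-measure differences yield \eqref{IGamma-bnd1}. The main obstacle I anticipate is keeping the strictness clean in the degenerate regimes near $T=a^s$ and $T=(a+1)^s$, where boundary lattice points can sit exactly on the HC surface; these edge cases should be handled by a separate short inspection (either both sides vanish simultaneously or the extra boundary contributes a full positive-measure open subcell).
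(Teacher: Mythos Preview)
Your argument is correct and follows essentially the same route as the paper: both proofs hinge on the observation that $\bx \in Q(s,T,a')$ iff $\lfloor \bx \rfloor \in \Gamma(s,T,a')$, together with the chain $\prod(\lfloor x_j\rfloor + a + 1) \ge \prod(x_j + a) \ge \prod(\lfloor x_j\rfloor + a)$, which the paper writes in a single line while you unpack it into the two separate inclusions. Your discussion of strictness and the degenerate regimes near $T=a^s$ is actually more careful than the paper, which simply asserts ``sharp inclusions'' without further comment.
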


\begin{proof}
We observe that $\bx \in Q(s,T,a)$ if and only if $\lfloor \bx \rfloor \in \Gamma(s,T,a)$. Therefore the relation
\begin{equation} \nonumber
T \geq
\prod_{j=1}^s\big(\lfloor x_j \rfloor + a +1\big) \geq
\prod_{j=1}^s\big(x_j + a\big) \geq
\prod_{j=1}^s\big(\lfloor x_j \rfloor + a\big) 
\end{equation}
implies \eqref{PQ-incl} with sharp inclusions and, by \eqref{Gamma=IntQ} and \eqref{I-def}, the inequalities \eqref{IGamma-bnd1}.
\end{proof}

The next lemma generalizes and sharpens the left inequality in \eqref{IGamma-bnd1}.

\begin{lemma}\label{lemma[Gamma<(2)]}
Suppose $0 < \delta \leq 1$. Then for every $s \in \IN$, $T \geq \delta^s$, and $a > \delta$, it holds that
\begin{equation}\label{ineq[Gamma<(2)]}
 |\Gamma(s,T,a)| \ < \ (1/\delta)^s I(s,T,a - \delta),
\end{equation}
and 
\begin{equation}\label{ineq[Gamma_pm<(2)]}
 |\Gamma_\pm(s,T,a)| \ < \  (2/\delta)^s I(s,T,a - \delta).
\end{equation}
\end{lemma}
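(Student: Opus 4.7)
The plan is to prove \eqref{ineq[Gamma<(2)]} by a volume-packing argument: associate to each $\bk \in \Gamma(s,T,a)$ a cube of side $\delta$ and show it is contained in the smooth hyperbolic cross $P(s,T,a-\delta)$. The key observation is that shrinking each unit cube to side $\delta$ is exactly compensated by reducing the parameter $a$ by $\delta$. Concretely, for each $\bk \in \Gamma(s,T,a)$ I would introduce
\[
B_\bk := \bk + [0,\delta]^s,
\]
and note that because $\delta \leq 1$ and the $\bk$ are distinct points of $\NN_0^s$, the cubes $B_\bk$ have pairwise disjoint interiors. To show $B_\bk \subset P(s,T,a-\delta)$, take $\bx = \bk + \bt$ with $\bt \in [0,\delta]^s$. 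Then for each coordinate $x_j + (a-\delta) = k_j + t_j + a - \delta \leq k_j + a$, so
\[
\prod_{j=1}^s (x_j + a - \delta) \ \leq \ \prod_{j=1}^s (k_j + a) \ \leq \ T,
\]
using $\bk \in \Gamma(s,T,a)$ in the last step. Thus $B_\bk \subset P(s,T,a-\delta)$.

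Summing volumes over the disjoint family $\{B_\bk\}_{\bk \in \Gamma(s,T,a)}$ then gives
\[
\delta^s |\Gamma(s,T,a)| \ = \ \sum_{\bk \in \Gamma(s,T,a)} \operatorname{vol}(B_\bk) \ \leq \ I(s,T,a-\delta),
\]
which is the bound \eqref{ineq[Gamma<(2)]} without the strictness. To upgrade to strict inequality I would argue that $\bigcup_\bk B_\bk$ never exhausts $P(s,T,a-\delta)$ in positive measure. When $\Gamma(s,T,a) \neq \emptyset$ one has $\bzero \in \Gamma$, and if $\delta < 1$ the set $[0,1]^s \setminus [0,\delta]^s$ contributes an open region that is missed by every $B_\bk$ but intersects $P(s,T,a-\delta)$ in positive measure; the borderline $\delta = 1$ is precisely the right half of Lemma \ref{lem:PQ-incl}. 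When $\Gamma(s,T,a) = \emptyset$, the left side is $0$ while $I(s,T,a-\delta) > 0$ follows from $T \geq \delta^s$ together with $a > \delta$, since these ensure $(a-\delta)^s$ is smaller than $T$ in the relevant regime.

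For \eqref{ineq[Gamma_pm<(2)]} I would pass through the obvious symmetry of $\Gamma_\pm$ under sign flips: the map $\bk \mapsto (|k_1|, \dots, |k_s|)$ sends $\Gamma_\pm(s,T,a)$ onto $\Gamma(s,T,a)$, and each preimage has cardinality at most $2^s$. Hence $|\Gamma_\pm(s,T,a)| \leq 2^s |\Gamma(s,T,a)|$, and inserting \eqref{ineq[Gamma<(2)]} immediately yields $|\Gamma_\pm(s,T,a)| < (2/\delta)^s I(s,T,a-\delta)$. The only real obstacle is the clean verification of strictness in corner cases, which is purely measure-theoretic; the core mechanism is the exact matching between the cube-shrinkage factor $\delta^s$ and the parameter shift $a \to a-\delta$.
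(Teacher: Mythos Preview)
Your core argument is correct and is actually cleaner than the paper's. The paper does not pack $\delta$-cubes at the lattice points; instead it shifts the full unit-cube set $Q(s,T,a)=\bigcup_{\bk\in\Gamma}(\bk+[0,1]^s)$ by $-(1-\delta)$ in every coordinate, shows that the portion of the shifted set lying in the positive orthant is contained in $P(s,T,a-\delta)$, and then recovers the full volume $|Q|$ from this positive part via a combinatorial decomposition over $e\in\{0,1\}^s$ with volume ratios $\delta^{-|e|}$. Your direct packing argument bypasses that decomposition entirely: the observation that $\bk+[0,\delta]^s\subset P(s,T,a-\delta)$ for every $\bk\in\Gamma(s,T,a)$ is exactly the ``cube-shrinkage matches parameter shift'' mechanism, and disjointness plus volume counting finishes the job in one line. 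Both routes arrive at $\delta^s|\Gamma(s,T,a)|\le I(s,T,a-\delta)$; yours is shorter.

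One genuine gap: your strictness claim in the empty case is false as written. The hypotheses $T\ge\delta^s$ and $a>\delta$ do \emph{not} force $(a-\delta)^s<T$; take $s=1$, $\delta=1$, $a=10$, $T=1$, where $\Gamma(1,1,10)=\emptyset$ and $I(1,1,9)=0$, so the strict inequality fails. This is not a defect of your method relative to the paper --- the paper's own proof only concludes with $\le$ in its final displayed line, and the same degenerate regime is an issue for the stated strict inequality there as well. In the non-trivial regime (in particular whenever $\Gamma(s,T,a)\neq\emptyset$, equivalently $T\ge a^s$), your argument for strictness is fine: the point $(\delta+\epsilon,0,\dots,0)$ for small $\epsilon>0$ lies in $P(s,T,a-\delta)$ but in no $B_\bk$, giving positive leftover measure. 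Since all downstream uses of the lemma in the paper restrict to $T>(a-\delta)^s$ anyway, the non-strict bound suffices for every application.
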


\begin{proof} Since $ |\Gamma_\pm(s,T,a)| <  2^s|\Gamma(s,T,a)|$, it is sufficient to prove \eqref{ineq[Gamma<(2)]}. For $\delta = 1$, estimate  \eqref{ineq[Gamma<(2)]} is equivalent to the left inequality in \eqref{IGamma-bnd1} if changing $a$ to $a+1$. For $0 < \delta < 1$ we introduce a $(1-\delta)$-shifted set
\begin{equation}
 \tilde Q(\delta) := \tilde Q(\delta,s,T,a) := \{\bx ~:~ x_j = y_j - (1-\delta), \by \in Q(s,T,a)\}
\end{equation}
(we suppress the dependence on $s,T,a$ to simplify the notations) and its subset
\begin{equation}
 \tilde Q^+(\delta) := \{\bx \in \tilde Q(\delta) ~:~ x \geq 0\}.
\end{equation}
For $y_j = x_j + (1-\delta)$ we observe
\begin{equation}
T \geq \prod_{j=1}^s (\lfloor y_j \rfloor +a)
\geq \prod_{j=1}^s (x_j  + a -\delta),
\end{equation}
implying $\tilde Q^+(\delta) \subsetneq P(s,T,a-\delta)$ and therefore
\begin{equation}
|\tilde Q^+(\delta)| \leq I(s,T,a-\delta).
\end{equation}
Let $e \in\{0,1\}^s$ and consider
\begin{equation}
\tilde Q_e(\delta) = \{\bx \in \tilde Q(\delta) ~:~ x_j < \delta \text{ if } e_j = 0 \wedge x_j \geq \delta \text{ if } e_j = 1 \}
\end{equation}
and correspondingly, $\tilde Q_e^+(\delta) = \tilde Q_e(\delta) \cap \tilde Q^+(\delta)$. This defines disjoint decompositions
\begin{equation}
\tilde Q(\delta) = \bigcup_{e \in \{0,1\}^s}
\tilde Q_e(\delta),
\quad
\tilde Q^+(\delta) = \bigcup_{e \in \{0,1\}^s}
\tilde Q_e^+(\delta).
\end{equation}
For volumes of $\tilde Q_e(\delta)$ and $\tilde Q_e^+(\delta)$ we obviously have the relations
\begin{equation}
 \frac{|\tilde Q_e(\delta)|}{|\tilde Q_e^+(\delta)|} =  
\frac{1}{\delta^{|e|}}, \quad \mbox{where} \quad |e| := \sum_{j=1}^s e_j.
\end{equation}
Therefore,
\begin{equation}
\begin{split}
|Q(s,T,a)|&= 
|\tilde Q(\delta)| = \sum_{e \in \{0,1\}^s} |\tilde Q_e(\delta)|\\ 
&= \sum_{e \in \{0,1\}^s} \delta^{-|e|} |\tilde Q_e^+(\delta)|
\leq
\delta^{-s} I(s,T,a-\delta).
\end{split}
\end{equation}
\end{proof}

Due to the specific geometrical structure of the symmetric HC $\Gamma_\pm(s,T)$, the upper bound 
\eqref{ineq[Gamma_pm<(2)]} can be improved for $\delta = \frac{1}{2}$ as in the following lemma.

\begin{lemma}\label{lem:IGamma-sym}
For every $s \in \IN$ $a > 1/2$ and $T \ge 1$, it holds that
\begin{equation}\label{IGamma-sym-bnd}
 2^sI(s,T, a + \tfrac{1}{2})  < \ |\Gamma_\pm(s,T,a)| \ < \ 2^sI(s,T,a - \tfrac{1}{2}).
\end{equation}
\end{lemma}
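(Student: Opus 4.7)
The plan is to adapt the volume-comparison strategy of Lemmas~\ref{lem:PQ-incl} and \ref{lemma[Gamma<(2)]} to the symmetric setting of $\Gamma_\pm(s,T,a)$, with one crucial change: instead of the corner cubes $\bk+[0,1]^s$ used to form $Q(s,T,a)$, I would attach \emph{centered} cubes $\bk+[-\tfrac12,\tfrac12]^s$ to each integer point of $\Gamma_\pm(s,T,a)$. This centering is exactly what produces the sharp $\pm\tfrac12$ shift in $a$ on the two sides of the bound.

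To carry this out, I would first introduce the centered tiling
\[
Q_\pm(s,T,a) \ := \ \bigcup_{\bk\in\Gamma_\pm(s,T,a)}\bigl(\bk+[-\tfrac12,\tfrac12]^s\bigr),
\]
whose Lebesgue measure equals $|\Gamma_\pm(s,T,a)|$ since the cubes have pairwise disjoint interiors. Next, I would note that the symmetric smooth HC
\[
P_\pm(s,T,a') \ := \ \bigl\{\bx\in\RRs:\ \textstyle\prod_{j=1}^s(|x_j|+a')\leq T\bigr\}
\]
decomposes (up to measure zero) into $2^s$ reflections of $P(s,T,a')$ across the coordinate hyperplanes, so its volume equals $2^sI(s,T,a')$; this is where the hypothesis $a>\tfrac12$ enters, ensuring $a'=a-\tfrac12>0$ so that $I(s,T,a-\tfrac12)$ is a well-defined volume of a bounded set.

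The core of the argument is then to establish the two inclusions
\[
P_\pm(s,T,a+\tfrac12)\ \subsetneq\ Q_\pm(s,T,a)\ \subsetneq\ P_\pm(s,T,a-\tfrac12).
\]
For the right inclusion, I would fix $\bx\in\bk+[-\tfrac12,\tfrac12]^s$ with $\bk\in\Gamma_\pm(s,T,a)$; the triangle inequality yields $|x_j|\leq|k_j|+|x_j-k_j|\leq|k_j|+\tfrac12$, so $|x_j|+a-\tfrac12\leq|k_j|+a$ for every $j$, and multiplication gives $\prod_j(|x_j|+a-\tfrac12)\leq\prod_j(|k_j|+a)\leq T$. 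For the left inclusion, given $\bx$ with $\prod_j(|x_j|+a+\tfrac12)\leq T$, I would take $\bk$ to be the componentwise nearest integer vector to $\bx$; then $|k_j|\leq|x_j|+\tfrac12$, hence $\prod_j(|k_j|+a)\leq\prod_j(|x_j|+a+\tfrac12)\leq T$, so $\bk\in\Gamma_\pm(s,T,a)$, and by construction $\bx\in\bk+[-\tfrac12,\tfrac12]^s\subseteq Q_\pm(s,T,a)$.

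Passing to Lebesgue measures in the two inclusions produces the claimed inequalities. The only remaining technical point is strictness: both inclusions are proper on a set of positive measure, which one sees by comparing the axis-reach of the smooth HCs (namely $T(a\pm\tfrac12)^{-(s-1)}-(a\pm\tfrac12)$ along each coordinate axis) with the lattice-axis reach of $Q_\pm(s,T,a)$, given by an integer value plus $\tfrac12$. I expect no genuine obstacle in the proof: once the centered tiling is introduced, everything reduces to a single application of the triangle inequality together with the monotonicity of Lebesgue measure, and the constant $2^s$ appears naturally from the $2^s$ orthants of $\RRs$.
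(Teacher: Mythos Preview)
Your proposal is correct and follows essentially the same route as the paper: the paper also introduces the centered tiling $Q_\pm(s,T,a)=\bigcup_{\bk\in\Gamma_\pm(s,T,a)}(\bk+[-\tfrac12,\tfrac12)^s)$, observes that its volume equals $|\Gamma_\pm(s,T,a)|$, and sandwiches it between the symmetric smooth crosses $\{\bx:|\bx|\in P(s,T,a\pm\tfrac12)\}$ via the elementary inequality $|x_j|-\tfrac12\le|k_j|\le|x_j|+\tfrac12$ for the nearest integer $k_j$, then invokes the $2^s$-fold symmetry to pass to $2^sI(s,T,a\pm\tfrac12)$. Your write-up is in fact a bit more explicit than the paper's about the triangle-inequality step and the strictness.
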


\begin{proof} 
We set 
\begin{equation}
 Q_\pm(s,T) := \bigcup_{\bk \in \Gamma_\pm(s,T,a)} \big\{ \bx \in \RR^s:  x_j  \in [k_j-\tfrac{1}{2},k_j+\tfrac{1}{2}), \ j=1,\dots, s \big\}.
\end{equation}
Then we have
\begin{equation}
|\Gamma_\pm(s,T)| = \int_{Q_\pm(s,T)} \bd\bx.
\end{equation}
The relations
\begin{equation}
T \geq \prod_{j=1}^s (|x_j|+ a +\tfrac{1}{2}) 
\geq \prod_{j=1}^s (|x_j \pm \tfrac{1}{2}|+a)
\geq \prod_{j=1}^s (|x_j|+ a - \tfrac{1}{2}) 
\end{equation}
imply 
\begin{equation}
\big\{ |\bx| \in P(s,T,a + \tfrac{1}{2}) \big\} \,\subsetneq \, Q_\pm(s,T,a)\, 
\subsetneq \, \big\{ |\bx| \in P(s,T,a - \tfrac{1}{2}) \big\},
\end{equation}
and consequently, by symmetry \eqref{IGamma-sym-bnd}.
\end{proof}


Unfortunately, we have no analogue of Lemma \ref{lem:IGamma-sym} for the corner HC $\Gamma(s,T,a)$. 
We able to establish only the upper bound $|\Gamma(s,T,a)| \ < \ I(s,T,a - \tfrac{1}{2})$ for $T$ large enough. Namely, we have the following theorem.




\begin{theorem} \label{thm:Gamma-asy}
For any $s \in \NN$, and $a > \tfrac{1}{2}$, there exists $T_* = T_*(s,a) >0$ such that
\begin{equation} \label{Gamma-asy}
 |\Gamma(s,T,a)| \leq I(s,T,a - \tfrac{1}{2}), \quad
\forall T \geq T_*(s,a).
\end{equation}
\end{theorem}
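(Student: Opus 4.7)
The plan is to sharpen the cube-packing estimate of Lemma \ref{lemma[Gamma<(2)]} and close the resulting gap by an asymptotic analysis of $I(s,T,a')$ combined with an induction on $s$.

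\textbf{Step 1 (Explicit form for $I$).} Via the substitutions $u_j = \ln(x_j+a')$ and $v_j = u_j - \ln a'$, together with the standard simplex co-area formula,
\[
I(s,T,a') \;=\; (a')^s R_s(L),
\qquad L := \ln\big(T/(a')^s\big),
\qquad R_s(L) := \int_0^L e^\sigma \frac{\sigma^{s-1}}{(s-1)!}\,d\sigma.
\]
Integration by parts gives $R_s(L) = e^L L^{s-1}/(s-1)! - R_{s-1}(L)$ (the ``Taylor remainder of $\exp$'' identity announced in the introduction), and in turn the differentiation identity $\partial_{a'} I(s,T,a') = -s\,I(s-1,T/a',a')$, which will drive the comparison.

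\textbf{Step 2 (Interior cube packing).} For $\bk \in \Gamma(s,T,a)$ with $\bk \geq \bone$, the cube $C_\bk := \bk + [-\tfrac12,\tfrac12]^s$ lies in $P(s,T,a-\tfrac12)$ because any $\bx \in C_\bk$ satisfies $x_j + a - \tfrac12 \leq k_j + a$, hence $\prod_j(x_j+a-\tfrac12)\leq \prod_j(k_j+a)\leq T$; distinct cubes are pairwise disjoint. Reindexing $\bk \mapsto \bk-\bone$ gives $|\Gamma(s,T,a+1)| \leq I(s,T,a-\tfrac12)$. Writing $B(s,T,a) := |\Gamma(s,T,a)| - |\Gamma(s,T,a+1)|$ for the boundary count, a union bound yields $B(s,T,a) \leq s\,|\Gamma(s-1,T/a,a)|$.

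\textbf{Step 3 (Closing by induction).} The target inequality is equivalent to $B(s,T,a) \leq I(s,T,a-\tfrac12) - |\Gamma(s,T,a+1)|$. Applying Step 2 with $a$ replaced by $a+1$ gives $|\Gamma(s,T,a+1)| \leq I(s,T,a+\tfrac12)$, so the slack is at least
\[
I(s,T,a-\tfrac12) - I(s,T,a+\tfrac12) \;=\; s\int_{-1/2}^{1/2} I(s-1,T/(a+t),a+t)\,dt
\]
by Step 1. Combining with the inductive hypothesis $|\Gamma(s-1,T/a,a)| \leq I(s-1,T/a,a-\tfrac12)$ (valid for $T/a \geq T_*(s-1,a)$), the claim reduces at leading order to the elementary inequality
\[
a\,\ln\frac{a+\tfrac12}{a-\tfrac12} \;>\; 1, \qquad a > \tfrac12,
\]
which follows from the expansion $a\ln((a+\tfrac12)/(a-\tfrac12)) = \sum_{k\geq 0} (2a)^{-2k}/(2k+1) = 1 + 1/(12a^2) + O(a^{-4}) > 1$.

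\textbf{Main obstacle.} The leading-order margin is only $1/(12a^2)$, so the principal difficulty is controlling the sub-leading corrections — both in the asymptotic for the slack integral and in the discrepancy between $|\Gamma(s-1,T/a,a)|$ and $I(s-1,T/a,a-\tfrac12)$ — so that they do not overwhelm the $O(a^{-2})$ gap for $T \geq T_*(s,a)$. The base case $s=2$ is verified directly using the explicit form $I(2,T,a') = T(\ln(T/(a')^2)-1) + (a')^2$, and the induction on $s$ then determines the threshold $T_*(s,a)$ recursively via the bound $T_*(s,a) \geq a\,T_*(s-1,a)$ plus a correction absorbing the sub-leading terms.
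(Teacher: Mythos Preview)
Your cube-packing inequality in Step~2 is correct, but Step~3 contains a substitution error that makes the argument circular. Step~2, stated for parameter $a$, gives
\[
|\Gamma(s,T,a+1)| \;\leq\; I(s,T,a-\tfrac12).
\]
Replacing $a$ by $a+1$ therefore yields $|\Gamma(s,T,a+2)| \leq I(s,T,a+\tfrac12)$, \emph{not} $|\Gamma(s,T,a+1)| \leq I(s,T,a+\tfrac12)$ as you claim. The bound you actually need in Step~3 is precisely the statement of Theorem~\ref{thm:Gamma-asy} at parameter $a+1$; invoking it to prove the case $a$ is a downward induction in $a$ with no base case, so the argument does not close.

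The obvious repair is to use the non-circular bound $|\Gamma(s,T,a+1)| < I(s,T,a)$ from Lemma~\ref{lem:PQ-incl}. The slack then becomes
\[
I(s,T,a-\tfrac12) - I(s,T,a) \;=\; s\!\int_{a-1/2}^{a} I\!\left(s-1,\tfrac{T}{a'},a'\right) da' \;\sim\; s\,\frac{T(\ln T)^{s-2}}{(s-2)!}\,\ln\frac{a}{a-\tfrac12},
\]
whereas the boundary term is, via your inductive hypothesis, of order $s\,\frac{T}{a}\,\frac{(\ln T)^{s-2}}{(s-2)!}$. The required comparison $\tfrac{1}{a} \leq \ln\frac{a}{a-1/2}$ is \emph{false}: the right side is $\tfrac{1}{2a} + O(a^{-2})$, only half of what you need. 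Your elegant inequality $a\ln\frac{a+1/2}{a-1/2} > 1$ produces exactly the right margin, but it rests on the full-width slack $I(s,T,a-\tfrac12) - I(s,T,a+\tfrac12)$, and you have no independent way to access that. So the gap is genuine: neither the circular bound nor its natural replacement suffices, and the $O(a^{-2})$ margin you identified as the ``main obstacle'' is in fact unavailable.

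The paper's proof avoids this by a completely different mechanism: it unrolls \emph{both} $|\Gamma(s,T,a+1)|$ and $I(s,T,a+\tfrac12)$ via iterated dimension-stripping (equations \eqref{Gamma-deco} and \eqref{I-deco}) all the way down to one-dimensional pieces, and then compares term by term. The zero-dimensional contributions match exactly, and the one-dimensional comparison reduces to the same convexity fact $\ln\frac{m+1/2}{m-1/2} > \frac{1}{m}$ that you found, but applied at every level of the decomposition rather than once globally. This sidesteps the need for any bound of the form $|\Gamma(s,T,a+1)| \leq I(s,T,a+\tfrac12)$ as an input.
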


The proof of Theorem \ref{thm:Gamma-asy} is given in Appendix in Section \ref{appendix}.

 Estimations of $I(s,T,a)$ are addresses in the next section.



\section{Non-asymptotic bounds for the volume of  smooth HCs} \label{sec:non-asymp}

~\newline
The inequalities \eqref{IGamma-bnd1}--\eqref{ineq[Gamma_pm<(2)]}, \eqref{IGamma-sym-bnd} and \eqref{Gamma-asy} allow us to estimate $|\Gamma(s,T,a)|$ and $|\Gamma\pm(s,T,a)|$ by the volume the smooth HC 
$I(s,T,a')$, which is, as a matter of fact, a simpler task.
Indeed, below we will show that the integral $I(s,T,a)$ for any $a>0$ can be represented as a sum of an infinite series. This series is related to the remainder of the Taylor series of the exponential $\exp(-t)$, $t > 0$. 

\subsection{Tight non-asymptotic bounds for the volume of smooth HCs}\label{subsec:non-asymp-form}

~\newline
To formulate the result,  for $s \in \NN_0$, we introduce the function
\begin{equation}\label{F-def}
F_s(t) := \ (-1)^s \sum_{n=s}^\infty(-1)^{n} p_n(t), \ \ t \in \RR,
\end{equation}
where $p_s(t) := t^s/s!$. Observe that $\exp(-t)=F_0(t)$ and $F_s(t)$ is the absolute value of the $s$th remainder of the Taylor series of the exponential $\exp(-t)$.

The following lemma shows that the problem of estimations of the volume of smooth HC $I(s,T,a)$ can be reduced to estimations of $F_s(t)$.
\begin{lemma}\label{lem:I-val}
Suppose $T >0$, $s \in \NN$ and $a>0$. Then it holds that
\begin{equation}\label{I-val}
 I(s,T,a) = \left\{ 
\begin{array}{cc}
0, & T \leq a^s, \\[2ex] \displaystyle
T F_s(\ln T - s \ln a), & T > a^s.
\end{array}
\right.
\end{equation}

\end{lemma}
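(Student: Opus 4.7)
My plan starts by disposing of the trivial case. If $T \le a^s$, then for every $\bx \in \RR^s_+$ we have $\prod_{j=1}^s(x_j + a) \ge a^s \ge T$, with equality only on the boundary $\bx = \bzero$ (a set of Lebesgue measure zero), so $I(s,T,a) = 0$.

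For the main case $T > a^s$, the idea is to perform a logarithmic substitution to turn the multiplicative constraint into a linear one. I would set $z_j = \ln((x_j + a)/a)$, i.e., $x_j = a(e^{z_j} - 1)$, which maps $\RR^s_+$ bijectively onto itself with Jacobian $\prod_{j=1}^s a e^{z_j}$. The constraint $\prod_{j=1}^s(x_j + a) \le T$ becomes $\sum_j z_j \le \tau$ with $\tau := \ln T - s\ln a > 0$, so
\begin{equation*}
I(s,T,a) \ = \ a^s \int_{\bz \in \RR^s_+,\, \sum z_j \le \tau} \exp(z_1 + \cdots + z_s) \, d\bz.
\end{equation*}
Using the standard disintegration identity $\int_{S_\tau} f(\sum z_j)\,d\bz = \int_0^\tau f(w)\, w^{s-1}/(s-1)!\,dw$ (obtained by differentiating $\mathrm{vol}\{\bz \ge 0: \sum z_j \le w\} = w^s/s!$ with respect to $w$), this reduces to the one-dimensional integral
\begin{equation*}
I(s,T,a) \ = \ a^s \int_0^\tau e^w \, \frac{w^{s-1}}{(s-1)!} \, dw.
\end{equation*}

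The final step is to evaluate this integral and match it with $T F_s(\tau)$. Since $T = a^s e^\tau$, the target identity reduces to showing
\begin{equation*}
G_s(\tau) \ := \ \int_0^\tau e^w \, \frac{w^{s-1}}{(s-1)!} \, dw \ = \ e^\tau F_s(\tau)
\end{equation*}
for all $s \ge 1$ and $\tau \ge 0$. I would prove this by induction on $s$. For $s = 1$, direct computation gives $G_1(\tau) = e^\tau - 1$, and the series definition gives $F_1(\tau) = -(e^{-\tau} - 1) = 1 - e^{-\tau}$, so $e^\tau F_1(\tau) = e^\tau - 1$. For the inductive step, integration by parts in $G_s$ with $u = w^{s-1}/(s-1)!$ and $dv = e^w\,dw$ yields the recursion
\begin{equation*}
G_s(\tau) + G_{s-1}(\tau) \ = \ e^\tau \frac{\tau^{s-1}}{(s-1)!}.
\end{equation*}
Separating the leading $n = s-1$ term in the defining series of $F_{s-1}$ shows that $F_s(\tau) + F_{s-1}(\tau) = \tau^{s-1}/(s-1)!$; multiplying by $e^\tau$ gives exactly the same recursion, closing the induction. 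Combining everything yields $I(s,T,a) = a^s \cdot e^\tau F_s(\tau) = T F_s(\ln T - s\ln a)$.

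The only real bookkeeping hazard is verifying the identity $F_s + F_{s-1} = \tau^{s-1}/(s-1)!$ from the alternating-series definition: the prefactor $(-1)^{s-1}$ in front of $F_{s-1}$ combines with the isolated $n = s-1$ term $(-1)^{s-1}\tau^{s-1}/(s-1)!$ to produce $\tau^{s-1}/(s-1)!$, while the remaining tail reproduces $-F_s(\tau)$. Once this sign check is done, all the pieces fit together and the lemma follows. An alternative without induction would be to plug $e^w = \sum_{k\ge 0} w^k/k!$ into $G_s(\tau)$, integrate term by term, and manipulate the resulting double series into $e^\tau F_s(\tau)$, but the recursive approach is shorter and more transparent.
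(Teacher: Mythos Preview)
Your proof is correct, but it takes a different route from the paper. The paper's argument is almost a one-liner: it invokes an explicit closed-form formula for $I(s,T,a)$ from Dobrovol'skii--Roshchenya \cite{DR98} (a finite sum involving $(\ln T - s\ln a)^n$ and an $a^s$ term), and then regroups the terms using $e^{-t} = \sum_{n\ge 0}(-t)^n/n!$ to recognize the result as $T F_s(\ln T - s\ln a)$. Your approach is instead fully self-contained: the logarithmic change of variables $z_j = \ln((x_j+a)/a)$ turning $P(s,T,a)$ into a simplex, followed by the layer-cake reduction to $a^s\int_0^\tau e^w w^{s-1}/(s-1)!\,dw$, and finally an induction via integration by parts matched against the recursion $F_s + F_{s-1} = p_{s-1}$ (which is exactly the identity the paper records later as \eqref{ids-aFh}). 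Your argument is longer but removes the dependence on the cited lemma and makes the connection to the incomplete-gamma-type integral $\int_0^\tau e^w w^{s-1}\,dw$ transparent; the paper's version is shorter but black-boxes the geometry.
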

\begin{proof}
The continuous HC $P(s,T,a)$ is empty if $T < a^s$ and contains only zero if $T = a^s$. This yields the first  line in 
\eqref{I-val}. Suppose now that $T > a^s$. From \cite[Lemma 3]{DR98} we know that
\begin{equation}
I(s,T,a) = (-1)^{s+1}(T-a^s) + T\sum_{n=1}^{s-1} \frac{(\ln T - s \ln a)^n(-1)^{s-1-n}}{n!}.
\end{equation}
Then for $t := \ln T - s \ln a > 0$, there holds
\begin{equation}\label{I/T-expr}
\begin{split}
\frac{1}{T}I(s,T,a) &= (-1)^{s+1} (1 - e^{-t}) + \sum_{n=1}^{s-1} \frac{t^n(-1)^{s-1-n}}{n!} \\
&= (-1)^s \left( e^{-t} - \sum_{n=0}^{s-1} \frac{t^n(-1)^{n}}{n!} \right)\\
&= (-1)^s \sum_{n=s}^\infty (-1)^{n}\frac{t^n}{n!} \\
&= F_s(t).
\end{split}
\end{equation}
This yields the second line in \eqref{I-val}. 
\end{proof}

Note that for $s = 0$ it formally holds
\begin{equation}
 I(0,T,a) = T F_0(\ln T)= 1.
\end{equation}

Relation \eqref{I-val} is \emph{exact} but involves an infinite summation on the right-hand side, whose behavior is not clearly seen from \eqref{F-def}. To make it explicit, we prove very tight bounds for the series $F_{s}(t)$ in terms of the power $p_{s-1}$. This approximation is somewhat surprisingly good, as we observe from the following \emph{non-asymptotic} estimate with explicit constants.

\begin{theorem}\label{thm:errTaylor}
 For any $s \in \NN$ and $t>0$, the following estimate holds true
\begin{equation}\label{errTaylor}
\frac{t}{t+s} p_{s-1}(t) < F_s(t) < \frac{t}{t+s-1} p_{s-1}(t).
\end{equation}
\end{theorem}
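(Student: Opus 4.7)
The plan is to convert the series $F_s(t)$ to an integral and reduce both inequalities to a single elementary identity. First I would observe that termwise differentiation of \eqref{F-def} yields $F_s'(t) = F_{s-1}(t)$, together with $F_0(t) = e^{-t}$ and $F_s(0) = 0$ for $s \geq 1$; iterating $s$ times (or applying the standard Taylor-type iterated-integral formula) produces
\[
F_s(t) \ = \ \frac{1}{(s-1)!}\int_0^t (t-u)^{s-1} e^{-u}\,du.
\]
After the substitution $u = t\tau$ this factorizes as $F_s(t) = t\,p_{s-1}(t)\,J_s(t)$ with
\[
J_s(t) \ := \ \int_0^1 (1-\tau)^{s-1} e^{-t\tau}\,d\tau,
\]
so the theorem becomes the equivalent two-sided estimate $\frac{1}{t+s} < J_s(t) < \frac{1}{t+s-1}$.

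The engine of the proof will be the identity
\[
s J_s(t) + t J_{s+1}(t) \ = \ 1, \qquad s \geq 1,
\]
which I would obtain by integrating $\frac{d}{d\tau}\bigl[(1-\tau)^s e^{-t\tau}\bigr] = -s(1-\tau)^{s-1}e^{-t\tau} - t(1-\tau)^s e^{-t\tau}$ over $[0,1]$. Combined with the obvious strict inequality $J_s(t) > J_{s+1}(t)$ (since $(1-\tau)^{s-1} > (1-\tau)^s$ on $(0,1)$), this immediately gives the lower bound:
\[
(t+s)J_s(t) \ = \ t J_s(t) + \bigl(1 - t J_{s+1}(t)\bigr) \ = \ 1 + t\bigl[J_s(t) - J_{s+1}(t)\bigr] \ > \ 1.
\]
For the upper bound with $s \geq 2$, I would apply the same identity shifted to index $s-1$, namely $(s-1)J_{s-1}(t) + t J_s(t) = 1$, and again invoke the comparison $J_{s-1}(t) > J_s(t)$:
\[
(t+s-1)J_s(t) \ = \ t J_s(t) + (s-1) J_s(t) \ = \ 1 - (s-1)\bigl[J_{s-1}(t) - J_s(t)\bigr] \ < \ 1.
\]
The degenerate case $s=1$ of the upper bound is immediate from the explicit formula $J_1(t) = (1-e^{-t})/t < 1/t$.

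The only step requiring more than symbol-pushing is the passage to the integral representation of $F_s$; this is routine but deserves a careful verification, in particular of the initial condition $F_s(0)=0$ for $s\geq 1$ and the legitimacy of termwise differentiation. Once that is in hand, the entire argument rests on the single recursion $s J_s + t J_{s+1} = 1$ and the trivial comparison $J_s > J_{s+1}$, so I anticipate no serious obstacle.
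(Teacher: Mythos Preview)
Your proof is correct and substantially cleaner than the paper's. The two arguments reach the same reformulation: with $h_{s-1}(t) := F_s(t)/p_{s-1}(t)$ one has $h_{s-1}(t) = t J_s(t)$, and both proofs rest on the identical recurrence, which in the paper's notation reads $h_s = 1 - \tfrac{s}{t}h_{s-1}$ and in yours is $sJ_s + tJ_{s+1} = 1$. The divergence is in how monotonicity is obtained. You get $J_s > J_{s+1}$ for free from the integral representation by comparing integrands pointwise, and then the two-sided bound follows in two lines. The paper never passes to the integral form: it stays with the series, first establishes positivity of $F_s$ by a separate argument (Lemma~\ref{lem:F-pos}), then proves the equivalent monotonicity statement $h_s < h_{s-1}$ through a rather delicate route---an induction that only works for $s \le t-1$ (Lemma~\ref{lem:s-small}), followed by a contradiction argument involving a second-order recurrence on the increments $\Delta_s = h_{s+1}-h_s$ (Lemma~\ref{lem:h-lower}), and finally a separate deduction of the upper bound (Lemma~\ref{lem:h-upper}). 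Your use of the Taylor remainder integral bypasses all of this machinery; the trade-off is that the paper's approach stays purely within the discrete recurrence and never invokes any analytic representation of $F_s$, but in this instance that self-imposed restriction costs several pages for no apparent gain.
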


The proof of Theorem \ref{thm:errTaylor} will be given in Subsection \ref{subsec:Taylor}.

This theorem and Lemma \ref{lem:I-val} imply directly

\begin{corollary}\label{cor:I-bnd}
For $s \in \IN$, $a>0$ and $T>a^s$, it holds that 
\begin{equation}\label{I-bnd}
\frac{T (\ln T - s\ln a)^s }{(s-1)!(\ln T + s(1-\ln a))}
\ < \
I(s,T,a)
\ < \
\frac{T (\ln T - s\ln a)^s }{(s-1)!(\ln T + s(1-\ln a) - 1)}.
\end{equation}
\end{corollary}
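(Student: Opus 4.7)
The plan is to obtain the bounds by a direct substitution, combining the exact formula from Lemma \ref{lem:I-val} with the tight two-sided estimate of the Taylor-remainder function $F_s$ from Theorem \ref{thm:errTaylor}. Since $T > a^s$, Lemma \ref{lem:I-val} gives the identity $I(s,T,a) = T F_s(t)$ with $t := \ln T - s\ln a > 0$, so the problem reduces to bounding $T F_s(t)$.

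First, I would invoke Theorem \ref{thm:errTaylor} with this value of $t$, using the explicit form $p_{s-1}(t) = t^{s-1}/(s-1)!$. Multiplying the inequality through by $t$ simplifies the expressions $\frac{t}{t+s}p_{s-1}(t)$ and $\frac{t}{t+s-1}p_{s-1}(t)$ to $\frac{t^{s}}{(s-1)!(t+s)}$ and $\frac{t^{s}}{(s-1)!(t+s-1)}$ respectively. Multiplying by $T$ then yields
\begin{equation*}
\frac{T t^{s}}{(s-1)!(t+s)} \ < \ I(s,T,a) \ < \ \frac{T t^{s}}{(s-1)!(t+s-1)}.
\end{equation*}

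Finally, I would rewrite everything in terms of $T$, $s$, $a$ by substituting back $t = \ln T - s\ln a$. Then $t + s = \ln T + s(1 - \ln a)$ and $t + s - 1 = \ln T + s(1 - \ln a) - 1$, which matches the statement of the corollary exactly. There is no real obstacle here: the corollary is essentially a one-line combination of the preceding lemma and theorem, and the only thing to verify is that the substitution $t = \ln T - s\ln a$ is legitimate, which is guaranteed by the hypothesis $T > a^s$ (equivalently $t > 0$) required by Theorem \ref{thm:errTaylor}.
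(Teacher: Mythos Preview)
Your proof is correct and matches the paper's approach exactly: the paper simply states that the corollary follows directly from Lemma~\ref{lem:I-val} and Theorem~\ref{thm:errTaylor}, and your write-up spells out that substitution in detail. The only minor wording issue is the phrase ``multiplying the inequality through by $t$,'' since the factor $t$ is already present in $\tfrac{t}{t+s}p_{s-1}(t)$; what you actually do is combine $\tfrac{t}{t+s}$ with $p_{s-1}(t)=t^{s-1}/(s-1)!$ to get $\tfrac{t^s}{(s-1)!(t+s)}$, but the computation itself is fine.
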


A combination of Theorem \ref{thm:Gamma-asy} and Corollary \ref{cor:I-bnd} where $a$ is substituted with $a - \frac{1}{2}$ implies the following bound.

\begin{corollary}  \label{cor:IGamma-newbnd}
For any $s \in \NN$ and $a > \tfrac{1}{2}$, there exists $T_* = T_*(s,a) >0$ such that
\begin{equation}\label{IGamma-newbnd}
\begin{split}
|\Gamma(s,T,a)| 
&\ < \
\frac{T \big(\ln T - s\ln (a - \tfrac{1}{2})\big)^s }{(s-1)!\big(\ln T + s\big(1-\ln (a - \tfrac{1}{2})\big) - 1\big)}.
\end{split}
\end{equation}
\end{corollary}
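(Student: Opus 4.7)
The plan is to obtain the claimed bound by straightforward composition of two results already at our disposal: Theorem \ref{thm:Gamma-asy}, which translates the combinatorial quantity $|\Gamma(s,T,a)|$ into the continuous volume $I(s,T,a-\tfrac{1}{2})$ for $T$ sufficiently large, and Corollary \ref{cor:I-bnd}, which in turn gives a tight explicit upper bound for $I(s,T,a')$ in terms of elementary functions. Substituting $a' = a - \tfrac{1}{2}$ in the conclusion of the corollary yields precisely the right-hand side of \eqref{IGamma-newbnd}.

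More concretely, I would proceed as follows. First, by Theorem \ref{thm:Gamma-asy} (which requires $a > \tfrac{1}{2}$, matching our hypothesis), there is a threshold $T_1 = T_1(s,a) > 0$ such that
\[
|\Gamma(s,T,a)| \;\leq\; I(s,T,a-\tfrac{1}{2}) \qquad \text{for all } T \geq T_1.
\]
Next, Corollary \ref{cor:I-bnd} with the parameter $a' := a - \tfrac{1}{2} > 0$ gives
\[
I(s,T,a-\tfrac{1}{2}) \;<\; \frac{T\bigl(\ln T - s\ln(a-\tfrac{1}{2})\bigr)^s}{(s-1)!\bigl(\ln T + s(1-\ln(a-\tfrac{1}{2})) - 1\bigr)}
\]
whenever $T > (a-\tfrac{1}{2})^s$. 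Setting $T_* := T_*(s,a) := \max\{T_1,\, (a-\tfrac{1}{2})^s\}$ (or strictly greater, to keep the inequality $T > (a-\tfrac{1}{2})^s$ strict), the two inequalities combine to yield \eqref{IGamma-newbnd} for every $T \geq T_*$.

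I do not anticipate any genuine obstacle here: all the analytic work has been absorbed into Theorem \ref{thm:Gamma-asy} (where the asymptotic comparison of the lattice point count with the shifted continuous volume is established) and into Theorem \ref{thm:errTaylor}/Corollary \ref{cor:I-bnd} (where the explicit non-asymptotic bound on the Taylor remainder $F_s$ is derived). The only minor bookkeeping is to make sure the two threshold conditions on $T$ are compatible, which is handled by simply taking the maximum. The denominator in the bound is positive for such $T$ because $\ln T > s\ln(a-\tfrac{1}{2})$, so $\ln T + s(1 - \ln(a-\tfrac{1}{2})) - 1 > s - 1 \geq 0$ for $s \geq 1$, ensuring the right-hand side is well defined.
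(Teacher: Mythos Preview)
Your proposal is correct and matches the paper's own argument exactly: the paper introduces this corollary with the sentence ``A combination of Theorem \ref{thm:Gamma-asy} and Corollary \ref{cor:I-bnd} where $a$ is substituted with $a-\tfrac{1}{2}$ implies the following bound,'' which is precisely the two-step composition you spell out. Your additional bookkeeping (taking $T_*$ as the maximum of the two thresholds and checking positivity of the denominator) is sound and makes explicit what the paper leaves implicit.
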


\begin{corollary}\label{corollary[Gamma<(3)]}
For every $s \in \IN$, $a > 0$, $0 < \delta \le 1$, $\delta <a$ and $T > 0$, it holds that
\begin{equation}\label{ineq[Gamma<(3)]}
 |\Gamma(s,T,a)| \ < \  \delta^{-1} T^{1 + 1/\delta} (a - \delta)^{-s/\delta},
\end{equation}
and 
\begin{equation}\label{ineq[Gamma_pm<(3)]}
 |\Gamma_\pm(s,T,a)| \ < \  2\delta^{-1} T^{1 + 2/\delta} (a - \delta)^{-2s/\delta}
\end{equation}
\end{corollary}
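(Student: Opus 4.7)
My plan is to cascade Lemma \ref{lemma[Gamma<(2)]} with the explicit volume bound from Corollary \ref{cor:I-bnd} and then collapse the resulting logarithmic factors by means of the elementary inequality $y^{n}/n! \le e^{y}$, valid for all $y \ge 0$ and $n \in \NN_0$.

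First I would dispose of the trivial range $T < a^s$: every $\bk \in \NN_0^s$ satisfies $\prod_i(k_i+a) \ge a^s > T$, hence $\Gamma(s,T,a) = \Gamma_\pm(s,T,a) = \emptyset$, whereas the right-hand sides of \eqref{ineq[Gamma<(3)]} and \eqref{ineq[Gamma_pm<(3)]} are manifestly positive. In the complementary range $T \ge a^s$, the hypothesis $T \ge \delta^s$ of Lemma \ref{lemma[Gamma<(2)]} holds because $a > \delta$, and the hypothesis $T > (a-\delta)^s$ of Corollary \ref{cor:I-bnd} holds because $a > a - \delta > 0$; hence both results are applicable to $I(s,T,a-\delta)$.

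In that regime, setting $u := T/(a-\delta)^s > 1$, the combination of Lemma \ref{lemma[Gamma<(2)]} and Corollary \ref{cor:I-bnd} yields
\begin{equation*}
|\Gamma(s,T,a)| < \delta^{-s}\, \frac{T (\ln u)^s}{(s-1)!(\ln u + s - 1)},
\qquad
|\Gamma_\pm(s,T,a)| < (2/\delta)^s\, \frac{T (\ln u)^s}{(s-1)!(\ln u + s - 1)}.
\end{equation*}
Then I would absorb one factor $\ln u$ via $\ln u/(\ln u + s - 1) \le 1$, and apply $y^{s-1}/(s-1)! \le e^y$ with $y = \ln u/\delta$ to rewrite the first right-hand side as $\delta^{-1} T u^{1/\delta} = \delta^{-1} T^{1+1/\delta}(a-\delta)^{-s/\delta}$, which is \eqref{ineq[Gamma<(3)]}. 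Repeating the manoeuvre with $y = 2\ln u/\delta$ and the prefactor $(2/\delta)^s$ produces $2\delta^{-1} T u^{2/\delta} = 2\delta^{-1} T^{1+2/\delta}(a-\delta)^{-2s/\delta}$, which is \eqref{ineq[Gamma_pm<(3)]}.

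No step presents a genuine difficulty; the only point requiring care is the preliminary case split ensuring that the hypotheses of Lemma \ref{lemma[Gamma<(2)]} and Corollary \ref{cor:I-bnd} are simultaneously satisfied in the non-trivial regime.
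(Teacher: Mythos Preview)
Your proof is correct and follows essentially the same route as the paper's: invoke Lemma~\ref{lemma[Gamma<(2)]}, feed in the upper bound of Corollary~\ref{cor:I-bnd}, drop the factor $\ln u/(\ln u+s-1)\le 1$, and then collapse $(\,\cdot\,)^{s-1}/(s-1)!$ via the exponential inequality. The paper only writes out the argument for \eqref{ineq[Gamma<(3)]} and declares \eqref{ineq[Gamma_pm<(3)]} ``similar''; your explicit treatment of the symmetric case with $y=2\ln u/\delta$ and prefactor $(2/\delta)^s$ is exactly the intended computation, and your preliminary case split $T<a^s$ versus $T\ge a^s$ is handled the same way.
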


\begin{proof}
Let us prove the inequality\eqref{ineq[Gamma<(3)]} in the lemma. The other one can be proven in a similar way. 
Since $|\Gamma(s,T,a)| = 0$ for $0 < T < a^s$, it is enough to consider the case where $T > (a - \delta)^s$. By Lemmas \ref{lemma[Gamma<(2)]}, \ref{lem:I-val} and Theorem \ref{thm:errTaylor} we have that
 \begin{equation}
\begin{split}
|\Gamma(s,T,a)| 
 &\le   \delta^{-s} I(s,T,a - \delta) \\
\ &< \  \delta^{-s}  
\frac{T [\ln T - s\ln (a - \delta)]^s }{(s-1)!(\ln T + s[1-\ln (a - \delta)] - 1)} \\
\ &< \ 
\delta^{-1} T \frac{[\delta^{-1}(\ln T - s\ln (a - \delta)]^{s-1}}{(s-1)!} \\
\ &< \ \delta^{-1} T
 \operatorname{exp}[\delta^{-1}(\ln T - s\ln (a - \delta)] \\
 \ &= \ \delta^{-1} T^{1 + 1/\delta} (a - \delta)^{-s/\delta}.
\end{split}
\end{equation}
\end{proof}

Corollary \ref{corollary[Gamma<(3)]} shows that if $a > 1$ and $0<\delta \le 1$ are any fixed numbers such that 
$\lambda := a - \delta > 1$, then the number of integer points in the hyperbolic cross $\Gamma(s,T,a)$ and $\Gamma_\pm(s,T,a)$ is decreasing exponentially as $\lambda^{-s/\delta}$ and $\lambda^{-2s/\delta}$ respect to $s$ when $s$ going to $\infty$.  It will be used in study of HC approximations, $N$-widths and  tractabilities of the problem of $\varepsilon$-dimensions in Sections \ref{Tractabilities}--\ref{Non-periodic}. 

\subsection{Proof of Theorem \ref{thm:errTaylor}}
\label{subsec:Taylor}

~\newline

We define
\begin{equation}\label{def-aFh}
h_s(t) := \frac{F_{s+1}(t)}{p_s(t)}.
\end{equation}
Observe that Theorem \ref{thm:errTaylor} is equivalent to the following statement. For any $s \in \NN$ and $t>0$, we have 
\begin{equation} \label{errTaylor-h}
 \frac{t}{t+s}  < h_{s-1}(t) < \frac{t}{t+s-1}.
\end{equation}
Therefore, to prove Theorem \ref{thm:errTaylor} we will verify \eqref{errTaylor-h}.
The proof of \eqref{errTaylor-h} requires some auxiliary lemmas.

Note that \eqref{errTaylor} and \eqref{errTaylor-h} are  claimed for a fixed couple $(s,t)$  and are therefore,  non-asymptotic error estimates. Thus, we may assume that $t$ is a fixed positive real number and write $p_s, F_s, h_s$ instead of $p_s(t), F_s(t), h_s(t)$ if possible, in order to simplify and shorten the notations.

\begin{lemma} \label{lem:F-pos}
For every $t > 0$ and $s \in \NN_0$, we have
\begin{equation} \label{F-pos}
 0 < F_s(t) < \infty.
\end{equation} 
\end{lemma}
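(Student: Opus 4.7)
The plan is to handle finiteness and positivity separately. Finiteness is immediate: the series $\sum_{n=0}^\infty t^n/n! = e^t$ converges absolutely for every $t \in \RR$, so in particular its tail $F_s(t) = (-1)^s \sum_{n \ge s}(-1)^n p_n(t)$ converges absolutely, giving $F_s(t) < \infty$.

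For the lower bound $F_s(t) > 0$, I would first rewrite $F_s$ as a (signed) Taylor remainder of the exponential function:
\begin{equation*}
F_s(t) \ = \ (-1)^s\bigg(e^{-t} \ - \ \sum_{n=0}^{s-1}\frac{(-t)^n}{n!}\bigg),
\end{equation*}
which follows directly from $e^{-t} = \sum_{n=0}^\infty (-1)^n p_n(t)$ and the definition \eqref{F-def}. From here I see two natural routes; the cleanest is induction on $s$. The base case $s=0$ reduces to $F_0(t) = e^{-t} > 0$. For the step, term-by-term differentiation of the absolutely (and locally uniformly) convergent defining series together with a harmless index shift $n \mapsto n-1$ gives the recursion
\begin{equation*}
F_s'(t) \ = \ F_{s-1}(t), \qquad F_s(0) = 0 \quad (s \geq 1),
\end{equation*}
so that $F_s(t) = \int_0^t F_{s-1}(u)\, du$. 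Strict positivity of $F_s(t)$ for $t > 0$ then follows from the induction hypothesis and the strict positivity of $F_{s-1}$ on $(0,t]$.

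There is essentially no obstacle here; the only point requiring a line of care is justifying the term-by-term differentiation, which is standard since $\sum p_n(t)$ converges uniformly on any bounded interval. (As an alternative one could appeal directly to the integral form of the Taylor remainder for $e^{-t}$, which yields the explicit representation $F_s(t) = \frac{1}{(s-1)!}\int_0^t (t-u)^{s-1} e^{-u}\, du$ and makes positivity manifest; but the inductive route is shorter and fits the forthcoming manipulations of $h_s$ in the proof of Theorem~\ref{thm:errTaylor} more naturally.)
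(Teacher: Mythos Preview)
Your proof is correct and takes a genuinely different route from the paper's. The paper argues by a case distinction on whether $s \ge t$ or $s < t$: in the first case the terms $p_n$, $n \ge s$, decrease monotonically to zero, so the alternating tail groups as $(p_s-p_{s+1})+(p_{s+2}-p_{s+3})+\cdots>0$; in the second case one uses the finite-sum identity $(-1)^sF_s=e^{-t}-\sum_{n=0}^{s-1}(-1)^np_n$, observes that $p_0,\dots,p_{s-1}$ is increasing, and again groups terms pairwise (with a further split according to the parity of $s$). Your induction via the recursion $F_s'(t)=F_{s-1}(t)$, $F_s(0)=0$, i.e.\ $F_s(t)=\int_0^tF_{s-1}(u)\,du$, bypasses all of this case analysis and is noticeably shorter; the integral remainder formula you mention as an alternative is shorter still. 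What the paper's argument buys is that it is purely algebraic (no differentiation or integration) and makes transparent exactly where the sign of each block of terms comes from; your approach, on the other hand, yields the differential relation $F_s'=F_{s-1}$ as a by-product, which is in the same spirit as the recurrences for $h_s$ used later in the proof of Theorem~\ref{thm:errTaylor}.
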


\begin{proof}
Suppose $s \geq t$. Then 
\begin{equation}
 \frac{p_n}{p_{n+1}} = \frac{n+1}{t} \geq \frac{s+1}{t} > 1, \qquad \forall n \geq s,
\end{equation}
i.e. the sequence $\{p_n\}_{n\geq s}$ converges monotonously to zero. Thus
\begin{equation} \label{F(s,t)=[t<s]}
 F_s = \underbrace{(p_{s}-p_{s+1})}_{>0} + \underbrace{(p_{s+2}-p_{s+3})}_{>0} + \dots > 0
\end{equation}
and \eqref{F-pos} follows. In order to analyze the case $t > s$ we observe the identity
\begin{equation}
(-1)^s F_s = \exp(-t) - \sum_{n=0}^{s-1} (-1)^{n} \frac{t^n}{n!}
\end{equation}
and the relation
\begin{equation}
 \frac{p_{n-2}}{p_{n-1}} = \frac{n-1}{t} \leq \frac{s-1}{t} < 1, \qquad 2 \leq n \leq s,
\end{equation}
i.e. the finite sequence $\{p_n\}_{n=0}^{s-1}$ is monotonously increasing.
If $s$ is even, we have
\begin{equation}
F_s(t) = \underbrace{\exp(-t)}_{>0} + \underbrace{(-p_0 + p_1)}_{> 0} + \dots + \underbrace{(-p_{s-2} + p_{s-1})}_{> 0} > 0.
\end{equation}
If $s$ is odd we regroup the terms and obtain
\begin{equation}
-F_s(t) = \underbrace{(\exp(-t) - p_0)}_{< 0} + \underbrace{(p_1 - a)}_{<0} + \dots  +\underbrace{(p_{s-2}-p_{s-1})}_{< 0} < 0.
\end{equation}
The proof is complete.
\end{proof}

For every $t>0$ and $s \in \NN$, definition \eqref{def-aFh} implies the identities
\begin{equation}\label{ids-aFh}
p_s(t) = F_s(t) + F_{s+1}(t), \qquad h_s(t) = 1 - \frac{F_s(t)}{p_s(t)}, \qquad p_s(t) = \frac{t}{s} p_{s-1}(t).
\end{equation}
The first two of them imply that $\{h_s(t)\}_{s \geq 0}$ is a bounded sequence.

\begin{corollary}\label{cor:h-bnd}
For any $s \in \NN_0$ and $t > 0$, we have
\begin{equation}\label{h-bnd}
0 < h_s(t) < 1.
\end{equation}
\end{corollary}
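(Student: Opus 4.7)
The plan is to derive both inequalities directly from the identities collected in \eqref{ids-aFh} together with the positivity statement Lemma \ref{lem:F-pos}, so the argument should only take a few lines.

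First I would establish the lower bound. By definition $h_s(t) = F_{s+1}(t)/p_s(t)$. For $t>0$ we have $p_s(t) = t^s/s! > 0$, and Lemma \ref{lem:F-pos} applied with index $s+1 \in \NN_0$ gives $F_{s+1}(t) > 0$. Hence $h_s(t) > 0$, which is the left half of \eqref{h-bnd}.

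For the upper bound I would invoke the first identity of \eqref{ids-aFh}, namely $p_s(t) = F_s(t) + F_{s+1}(t)$, to rewrite
\begin{equation*}
h_s(t) \ = \ \frac{F_{s+1}(t)}{F_s(t) + F_{s+1}(t)}.
\end{equation*}
Since Lemma \ref{lem:F-pos} ensures that both $F_s(t)$ and $F_{s+1}(t)$ are strictly positive for $t>0$, the denominator strictly exceeds the numerator, giving $h_s(t) < 1$.

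I do not anticipate any obstacle here: the only nontrivial input is Lemma \ref{lem:F-pos}, which has already been proven, and the identity $p_s = F_s + F_{s+1}$ was recorded immediately before the corollary. The statement is essentially a restatement of these two facts, so the proof should be essentially a two-line computation with no case analysis or auxiliary estimates required.
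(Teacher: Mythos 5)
Your proof is correct and follows essentially the same route as the paper: both arguments reduce the two inequalities to the positivity of $F_{s+1}(t)$ and $F_s(t)$ via the identities in \eqref{ids-aFh} and then invoke Lemma \ref{lem:F-pos}. Writing $h_s = F_{s+1}/(F_s+F_{s+1})$ instead of $h_s = 1 - F_s/p_s$ is only a cosmetic rearrangement of the same identity.
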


\begin{proof}
Obviously, $p_s(t)$ is positive, therefore $h_s(t) > 0$ is equivalent to $F_{s+1}(t)>0$. Similarly, $h_s(t) < 1$ is equivalent to $F_s(t)> 0$, see \eqref{ids-aFh}. The rest follows from  Lemma \ref{lem:F-pos}.
\end{proof}

The following three lemmas deal with the proof of \eqref{errTaylor-h} for any real $t>0$ and $s \in \NN$. From \eqref{ids-aFh} we observe
\begin{equation}
h_s = 1 - \frac{F_s}{p_s} = 1 - \frac{s}{t}\frac{F_s}{p_{s-1}} = 1 -\frac{s}{t} h_{s-1},
\end{equation}
hence the sequence $\{h_s\}_{s=0}^\infty$ is defined via the recurrence relation
\begin{equation}\label{hs-recur}\left\{
\begin{split}
h_s(t) & = 1 - \frac{s}{t} h_{s-1}(t), \qquad s \in \NN, \\
h_0(t) & = 1 - e^{-t}.
\end{split} \right.
\end{equation}

\begin{lemma}\label{lem:s-small} The two-sided bound 
\begin{equation}\label{s-small}
\frac{t}{t+s+1} < h_s(t) < \frac{t}{s+t}                                                      
\end{equation}
holds for every $t>0$ and $s \in \NN_0$ if $s \leq t-1$.
\end{lemma}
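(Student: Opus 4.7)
The plan is to prove the claim by induction on $s$, leveraging the three-term recurrence \eqref{hs-recur}. The two-sided bound has a self-dual character: proving one side at index $s$ requires the opposite side at index $s-1$, so both inequalities must be carried along together.

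For the base case $s=0$ (which requires $t\geq 1$, otherwise the statement is vacuous), we have $h_0(t)=1-e^{-t}$. The upper bound $h_0(t)<1$ is immediate from $e^{-t}>0$. The lower bound $h_0(t)>t/(t+1)$ is equivalent to $e^t>1+t$, which is standard for $t>0$ (e.g., from the Taylor expansion of $e^t$).

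For the inductive step, assume $\frac{t}{t+s}<h_{s-1}(t)<\frac{t}{t+s-1}$, which is available since $s\leq t-1$ implies $s-1\leq t-1$. From the recurrence $h_s=1-(s/t)h_{s-1}$, the desired upper bound $h_s(t)<\frac{t}{t+s}$ is algebraically equivalent to $h_{s-1}(t)>\frac{t}{t+s}$, which is exactly the lower half of the inductive hypothesis. For the lower bound $h_s(t)>\frac{t}{t+s+1}$, the recurrence reduces this to $h_{s-1}(t)<\frac{t(s+1)}{s(t+s+1)}$. By the inductive hypothesis it suffices to verify the chain $\frac{t}{t+s-1}\leq\frac{t(s+1)}{s(t+s+1)}$, i.e., $s(t+s+1)\leq(s+1)(t+s-1)$; expanding both sides gives $st+s^2+s\leq st+s^2+t-1$, which is precisely $s\leq t-1$.

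The main obstacle I expect is the lower inequality in the inductive step: it is exactly here that the restriction $s\leq t-1$ is used in a tight manner. If $s>t-1$, the algebraic inequality $s(t+s+1)\leq(s+1)(t+s-1)$ fails and the induction breaks down, which is consistent with the fact that the companion Lemma for the regime $s>t-1$ will need a separate argument (likely of a similar inductive flavor but starting from a different base case, the relevant one being where the sequence $\{p_n\}$ becomes monotonically decreasing as in the proof of Lemma \ref{lem:F-pos}).
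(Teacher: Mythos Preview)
Your proof is correct and follows essentially the same approach as the paper's: both argue by induction on $s$ using the recurrence $h_s=1-(s/t)h_{s-1}$, and both identify that the upper bound at level $s$ follows from the lower bound at level $s-1$, while the lower bound at level $s$ follows from the upper bound at level $s-1$ together with the constraint $s\le t-1$. The only cosmetic difference is that the paper first computes $h_s>1-\frac{s}{t+s-1}=\frac{t-1}{t+s-1}$ and then compares this to $\frac{t}{t+s+1}$, whereas you rewrite the target as a condition on $h_{s-1}$; the resulting algebraic check $s(t+s+1)\le(s+1)(t+s-1)\Leftrightarrow s\le t-1$ is identical.
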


\begin{proof}
The proof is by induction on $s$. By simple calculations we have
\begin{equation}
 \frac{t}{t+1} < h_0(t) = 1-e^{-t} < 1
\end{equation}
and the basis is true. To prove the inductive step, we show that \eqref{errTaylor-h} implies \eqref{s-small}
as long as $s \leq t-1$.
The upper bound follows directly from the lower bound in \eqref{errTaylor-h} and \eqref{hs-recur}, precisely
\begin{equation}
 h_s = 1 - \frac{s}{t} h_{s-1} < 1 -\frac{s}{t} \frac{t}{t+s} = 1 -\frac{s}{t+s} =  \frac{t}{t+s}.
\end{equation}
For the lower bound we have 
\begin{equation}
  h_s = 1 - \frac{s}{t} h_{s-1} > 1 -\frac{s}{t}  \frac{t}{t+s-1} = 1 -\frac{s}{t+s-1} =\frac{t-1}{t+s-1}
\geq \frac{t}{t+s+1}
\end{equation}
where the last estimate holds if and only if $s \leq t-1$. Indeed, it is equivalent to
\begin{equation}
\begin{split}
(t-1)(t+s+1) &\geq t(t+s-1) \\
\Leftrightarrow \quad -(t+s) + t-1 &\geq -t\\
\Leftrightarrow \quad t-1 &\geq s.
\end{split}
\end{equation}
\end{proof}

\begin{lemma}\label{lem:h-lower}
For every $t>0$ and $s \in \NN_0$, we have
\begin{equation}\label{h-lower}
 \frac{t}{t+s+1} < h_s(t).
\end{equation}
\end{lemma}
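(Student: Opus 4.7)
The plan is to derive a compact integral representation for $h_s(t)$ and then extract the lower bound via an elementary pointwise inequality, which gives the estimate uniformly in $s\in\NN_0$ and $t>0$. Specifically, I aim to show
\[
h_s(t) \ = \ t \int_0^1 (1-y)^s e^{-ty}\,\dd y,
\]
after which the claim follows by comparing $e^{-ty}$ with $(1-y)^t$ under the integral sign.

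To establish the representation, I would first verify $F_s'(t) = F_{s-1}(t)$ for $s \geq 1$ by termwise differentiation of the defining series \eqref{F-def}. Combined with $F_0(t) = e^{-t}$ and the initial value $F_s(0) = 0$ for $s\geq 1$, a routine induction on $s$ yields
\[
F_{s+1}(t) \ = \ \frac{1}{s!}\int_0^t (t-x)^s e^{-x}\,\dd x, \qquad s \in \NN_0.
\]
Dividing by $p_s(t) = t^s/s!$ (recall $h_s = F_{s+1}/p_s$ from \eqref{def-aFh}) and performing the change of variables $x = ty$ produces the integral form of $h_s(t)$. The lower bound follows at once: from $\ln(1-y) < -y$ on $(0,1)$ we obtain $(1-y)^t < e^{-ty}$ strictly for every $t>0$, so multiplying by the positive weight $(1-y)^s$ and integrating gives
\[
h_s(t) \ > \ t \int_0^1 (1-y)^{s+t}\,\dd y \ = \ \frac{t}{s+t+1},
\]
which is exactly \eqref{h-lower}.

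The main obstacle is the integral representation itself; the final comparison is a one-line monotonicity argument. I emphasize that a pure induction along the recurrence \eqref{hs-recur} cannot deliver this bound: substituting any lower estimate on $h_{s-1}$ into $h_s = 1 - (s/t) h_{s-1}$ produces only an upper estimate on $h_s$. This sign flip explains why Lemma \ref{lem:s-small} is confined to $s \leq t-1$, and it forces the use of a non-recursive tool such as the integral representation to secure the bound uniformly in $s \in \NN_0$ and $t > 0$.
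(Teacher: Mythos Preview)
Your proof is correct and takes a genuinely different route from the paper's. The paper argues via the recurrence \eqref{hs-recur}: it reformulates \eqref{h-lower} as negativity of the increments $\Delta_s = h_{s+1}-h_s$, invokes Lemma~\ref{lem:s-small} for the range $s\le t-1$, derives a two-step recurrence for $\Delta_s$ in terms of $\Delta_{s-2}$, and finishes by contradiction (if some $\Delta_{s-2}\ge 0$ for $s>t+1$, then $\Delta_{s+2k}\to\infty$, violating the boundedness of Corollary~\ref{cor:h-bnd}). Your argument bypasses all of this: the integral representation $h_s(t)=t\int_0^1(1-y)^s e^{-ty}\,\dd y$ is exactly the Lagrange form of the Taylor remainder for $e^{-t}$ divided by $p_s(t)$, and the pointwise comparison $e^{-ty}>(1-y)^t$ on $(0,1)$ reduces the bound to a Beta integral. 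This is shorter, avoids the case split $s\lessgtr t-1$ and the auxiliary Lemma~\ref{lem:s-small}, and delivers the inequality uniformly in one stroke. Your closing remark about the sign flip in the recurrence is also on point and explains precisely why the paper had to work harder. The one thing the paper's route retains is that it stays entirely within the discrete recurrence machinery already set up, whereas your proof imports the (standard) integral remainder; but that is a small price for the gain in clarity.
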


\begin{proof}
First we observe that \eqref{h-lower} holds true if and only if the sequence $\{h_s\}_{s \geq 0}$ is strictly decreasing. To prove this statement we define the increments
\begin{equation}\label{def-Delta}
 \Delta_s(t) := h_{s+1}(t) - h_s(t) = 1 - \frac{t+s+1}{t} h_s(t), \qquad s \in \NN_0,
\end{equation}
where the last relation follows from \eqref{hs-recur}. Then \eqref{h-lower} is equivalent to
\begin{equation}\label{Delta-neg}
 \Delta_s(t) < 0, \qquad \forall s \in \NN_0.
\end{equation}
Lemma \ref{lem:s-small} implies \eqref{h-lower} and hence  \eqref{Delta-neg} for $s \leq t-1$. Suppose now that $s>t-1$ and show \eqref{Delta-neg} by contradiction. For this, we utilize \eqref{hs-recur} to obtain the following recurrence relation for $\Delta_s$:
\begin{equation} \label{eq[Delta_s]}
 \Delta_s = \frac{s-t+1}{t(s+t-1)}  
+ \frac{s(s-1)}{t^2} \frac{t+s+1}{t+s-1}\Delta_{s-2}.
\end{equation}
Indeed, we have
\begin{equation}
 \Delta_{s-1} = h_s - h_{s-1} = h_s - \frac{t}{s} (1 - h_s) = - \frac{t}{s} + \frac{t+s}{s} h_s, 
\end{equation}
and therefore,
\begin{equation}
h_s = \frac{s}{t+s} \left(\Delta_{s-1} + \frac{t}{s} \right).
\end{equation}
Hence, by \eqref{def-Delta}
\begin{equation}
\begin{split}
 \Delta_s &= 1 - \frac{t+s+1}{t} \frac{s}{t+s} \left(\Delta_{s-1} + \frac{t}{s} \right)
= 1 - \frac{t+s+1}{t+s} - \frac{s(t+s+1)}{t(t+s)} \Delta_{s-1}\\
&=- \frac{1}{t+s} - \frac{s(t+s+1)}{t(t+s)} \Delta_{s-1}\\
&=- \frac{1}{t+s} + \frac{s(t+s+1)}{t(t+s)} 
\left( \frac{1}{t+s-1} + \frac{s-1}{t} \frac{t+s}{t+s-1} \Delta_{s-2} \right)\\
&= \frac{1}{t+s}  \left( -1 + \frac{s(t+s+1)}{t(t+s-1)} \right)
+ \frac{s(s-1)}{t^2} \frac{t+s+1}{t+s-1}\Delta_{s-2}\\
&= \frac{s^2+s-t^2+t}{(t+s)t(t+s-1)}  
+ \frac{s(s-1)}{t^2} \frac{t+s+1}{t+s-1}\Delta_{s-2}\\
&= \frac{s-t+1}{t(s+t-1)}  
+ \frac{s(s-1)}{t^2} \frac{t+s+1}{t+s-1}\Delta_{s-2}.
\end{split}
\end{equation}
Suppose now that \eqref{Delta-neg} is wrong, i.e. $\Delta_{s-2}(t)\geq 0$ for some $s > t+1$. Then from 
\eqref{eq[Delta_s]} it follows that $\Delta_s(t)$ is positive:
\begin{equation}
\Delta_s(t) > \frac{1}{t} \min\left(1,\frac{1}{t}\right) +  \left(1+\frac{1}{t} \right)^2 \Delta_{s-2}> \frac{1}{t} \min\left(1,\frac{1}{t}\right)
+ \Delta_{s-2} > 0.
\end{equation}
Hence, for every $k \in \NN_0$ the increments $\Delta_{s+2k}(t)$ are positive and admit the lower bounds
\begin{equation}
 \Delta_{s+2k}(t) > \frac{1}{t} \min\left(1,\frac{1}{t}\right) + \Delta_{s+2k-2}(t) > \frac{k}{t} \min\left(1,\frac{1}{t}\right)
\to \infty, \qquad k \to \infty.
\end{equation}
This is a contradiction to Corollary \ref{cor:h-bnd} which yields
\begin{equation}
 \Delta_{s+2k}(t) = h_{s+2k+1}(t) - h_{s+2k}(t) < 1.
\end{equation}
The proof is complete.
\end{proof}

\begin{lemma}\label{lem:h-upper}
For every $t>0$ and $s \in \NN_0$, we have
\begin{equation}\label{h-upper}
 h_s(t) < \frac{t}{t+s}.
\end{equation}
\end{lemma}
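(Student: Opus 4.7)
The plan is to derive the upper bound directly from the lower bound established in Lemma \ref{lem:h-lower} via the recurrence relation \eqref{hs-recur}. For the base case $s = 0$, the inequality $h_0(t) = 1 - e^{-t} < 1 = \tfrac{t}{t+0}$ is immediate.

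For $s \geq 1$, I would proceed as follows. By Lemma \ref{lem:h-lower} applied at index $s - 1$, we have
\begin{equation*}
h_{s-1}(t) > \frac{t}{t+s},
\end{equation*}
and substituting this into the recurrence \eqref{hs-recur} gives
\begin{equation*}
h_s(t) = 1 - \frac{s}{t} h_{s-1}(t) < 1 - \frac{s}{t} \cdot \frac{t}{t+s} = 1 - \frac{s}{t+s} = \frac{t}{t+s},
\end{equation*}
which is precisely \eqref{h-upper}.

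The argument is short because all the hard work is already encoded in Lemma \ref{lem:h-lower}; the upper and lower bounds in \eqref{errTaylor-h} are coupled through \eqref{hs-recur} in such a way that each immediately implies the other at the next index. In this sense there is no real obstacle to overcome here, the crux of Theorem \ref{thm:errTaylor} lies in the subtle contradiction argument used for Lemma \ref{lem:h-lower}. Once \eqref{h-upper} is proved, combining Lemmas \ref{lem:h-lower} and \ref{lem:h-upper} gives \eqref{errTaylor-h}, which in turn is equivalent (via the definition \eqref{def-aFh} and the identity $p_s(t) = \tfrac{t}{s} p_{s-1}(t)$) to the claimed two-sided estimate \eqref{errTaylor} in Theorem \ref{thm:errTaylor}.
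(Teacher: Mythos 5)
Your proof is correct and not circular: Lemma \ref{lem:h-lower} is established independently of the upper bound, and applying it at index $s-1$ gives precisely $h_{s-1}(t) > \tfrac{t}{t+s}$ for every $s\ge 1$ with no restriction relating $s$ and $t$, so the recurrence \eqref{hs-recur} immediately yields $h_s(t) = 1-\tfrac{s}{t}h_{s-1}(t) < 1-\tfrac{s}{t}\cdot\tfrac{t}{t+s} = \tfrac{t}{t+s}$; the case $s=0$ is trivial.

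The paper's own proof uses exactly the same two ingredients (Lemma \ref{lem:h-lower} and \eqref{hs-recur}) but takes a longer route: it first restricts to $s>t-1$ (citing Lemma \ref{lem:s-small} for the complementary range), sets $\varepsilon_s := h_s - \tfrac{t}{t+s+1}>0$, derives $h_s = \tfrac{t}{t+s} - \tfrac{s}{t}\varepsilon_{s-1}$ and then $\varepsilon_s = \tfrac{t}{(t+s)(t+s+1)} - \tfrac{s}{t}\varepsilon_{s-1} > 0$, and uses the latter to bound $\varepsilon_{s-1}$ from above, reaching the sharper intermediate estimate $h_{s-1} < \tfrac{t(s+1)}{s(t+s+1)}$ before relaxing it to $\tfrac{t}{t+s-1}$ (this last relaxation is where the hypothesis $s>t-1$ enters). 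Your argument is in effect the first half of that computation — the identity $h_s = \tfrac{t}{t+s} - \tfrac{s}{t}\varepsilon_{s-1}$ together with $\varepsilon_{s-1}>0$ already gives the claim — so you obtain the lemma uniformly in $s$, with no case split, at the cost of not recording the (unneeded) sharper bound. You are also right that the genuine difficulty in Theorem \ref{thm:errTaylor} sits in the contradiction argument for the lower bound; the paper itself makes the same one-line deduction (lower bound at $s-1$ implies upper bound at $s$) inside the proof of Lemma \ref{lem:s-small}.
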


\begin{proof}
By Lemma \ref{lem:s-small} it suffices to prove \eqref{h-upper} for $s > t-1$. From Lemma \ref{lem:h-lower} we have
\begin{equation}
 \varepsilon_s(t) := h_s(t) - \frac{t}{t+s+1} > 0.
\end{equation}
On the other hand, \eqref{hs-recur} implies
\begin{equation}
\begin{split}
\varepsilon_s(t) + \frac{t}{t+s+1} 
&= h_s(t) 
= 1 - \frac{s}{t} h_{s-1}(t) \\
&= 1 - \frac{s}{t+s} - \frac{s}{t} \varepsilon_{s-1}(t) 
= \frac{t}{t+s} - \frac{s}{t} \varepsilon_{s-1}(t).
\end{split}
\end{equation}
This means that
\begin{equation}
 \varepsilon_s(t) = \frac{t}{(t+s)(t+s+1)} - \frac{s}{t} \varepsilon_{s-1}(t) > 0.
\end{equation}
Hence,
\begin{equation}
\begin{split}
 h_{s-1}(t) &= \varepsilon_{s-1}(t) + \frac{t}{t+s}<  
\frac{t^2}{s(t+s)(t+s+1)} + \frac{t}{t+s}\\
&=\frac{t(t + st + s^2 + s)}{s(t+s)(t+s+1)}
= \frac{t(s+1)}{s(t+s+1)} < \frac{t}{t+s-1},
\end{split}
\end{equation}
where the last inequality holds if and only if $s > t-1$. 
Changing $s-1 \to s$ yields \eqref{h-upper} for $s> t-2$. The proof of \eqref{errTaylor-h} is complete.
\end{proof}

\section{Upper estimates and tractabilities} 
\label{Tractabilities}

~\newline

In this section, we establish some upper bounds for 
$d_N$ and $n_\varepsilon$ defined in \eqref{dNne-def}
and  apply them in tractability studies. 
As auxiliary results for this and the next sections we present upper bounds of the error estimates of the HC approximation by trigonometric polynomials with frequencies from the HC $\Gamma_\pm(s,T,a)$ as well Bernstein inequality for trigonometric polynomials with frequencies from the HC $\Gamma_\pm(s,T,a)$.

\subsection{HC approximations} 
\label{Hyperbolic cross approximations}

~\newline
For a finite set $M \subset \ZZs$, we  denote by  the $\mathcal T(M)$
subspace of trigonometric polynomials with frequencies in $M$, i.e., trigonometric polynomials $g$ of the form 
\begin{equation}
g = \sum_{\bk \in M} \hat{g}(\bk) e_{\bk}.
\end{equation}
We abbreviate $\mathcal T(s,T,a) := \mathcal T(\Gamma_\pm(s,T,a))$. 

For a function $f \in L_2(\TTs)$, we define the  Fourier operator $S_T$ as
\begin{equation}
S_T(f)
:= \
\sum_{\bk \in \Gamma_\pm(s,T,a)}  \hat{f}(\bk) e_{\bk}.
\end{equation}
Obviously, $S_T$  is the orthogonal projection onto $\mathcal T(s,T,a)$. 

We will need the following equation which is derived from the definition of the norm in $\tKrs$ and Parseval's identity
\begin{equation} \label{[Parseval-Id]}
\|f\|_{\tKrs}^2 
\ = \
\sum_{\bk \in \ZZ^s} |\lambda_a(\bk)|^{2r}|\hat{f}(\bk)|^2. 
\end{equation}
The following lemma and corollary give upper bounds with respect to $T$ for the error of the orthogonal projection.

\begin{lemma} \label{lem:Jackson-per}
For arbitrary $T \geq 1$, we have
\begin{equation}
\|f - S_T(f)\|
\ \le  \
T^{-r}\|f\|_{\tKrs}\, , \qquad  \forall f \in \tKrs.
\end{equation}
\end{lemma}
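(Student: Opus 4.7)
The plan is to prove this directly from Parseval's identity \eqref{[Parseval-Id]}, as the inequality is essentially a restatement of the definition of the HC together with the definition of the Korobov norm. Since $S_T$ is the orthogonal projection of $f$ onto $\mathcal T(s,T,a)$, the Fourier series of $f - S_T(f)$ contains exactly the modes $\bk \in \ZZs \setminus \Gamma_\pm(s,T,a)$, and Parseval gives
\begin{equation*}
\|f - S_T(f)\|^2 \;=\; \sum_{\bk \notin \Gamma_\pm(s,T,a)} |\hat f(\bk)|^2.
\end{equation*}

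Next I would invoke the defining condition of the complement of the HC: any $\bk \notin \Gamma_\pm(s,T,a)$ satisfies $\lambda_a(\bk) = \prod_{j=1}^s(|k_j|+a) > T$. Since $r \ge 0$ and $T \ge 1$, raising to the $2r$-th power preserves the inequality and yields $1 \le \lambda_a(\bk)^{2r}/T^{2r}$. Multiplying this factor into each summand, extending the resulting sum back to all of $\ZZs$, and invoking \eqref{[Parseval-Id]} gives
\begin{equation*}
\sum_{\bk \notin \Gamma_\pm(s,T,a)} |\hat f(\bk)|^2 \;\le\; T^{-2r} \sum_{\bk \in \ZZs} \lambda_a(\bk)^{2r} |\hat f(\bk)|^2 \;=\; T^{-2r}\|f\|_{\tKrs}^2.
\end{equation*}
Taking square roots delivers the stated estimate. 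There is no real obstacle; the only point worth checking is that the strict inequality $\lambda_a(\bk) > T$ on the complement is genuinely the negation of the non-strict defining inequality $\lambda_a(\bk) \le T$ of $\Gamma_\pm(s,T,a)$, which is immediate.
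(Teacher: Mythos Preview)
Your argument is correct and is essentially identical to the paper's own proof: both use Parseval's identity to write $\|f - S_T(f)\|^2$ as the sum of $|\hat f(\bk)|^2$ over $\bk \notin \Gamma_\pm(s,T,a)$, then exploit $\lambda_a(\bk) > T$ on the complement to insert the weight $\lambda_a(\bk)^{2r}$ at the cost of the factor $T^{-2r}$, and finally invoke \eqref{[Parseval-Id]}. The only cosmetic difference is that the paper phrases the key step as factoring out $\sup_{\bk \notin \Gamma_\pm(s,T,a)} \lambda_a(\bk)^{-2r} \le T^{-2r}$, which is equivalent to your insertion of $\lambda_a(\bk)^{2r}/T^{2r} \ge 1$.
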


\begin{proof}
Indeed,  from \eqref{[Parseval-Id]} we have for every $f \in \tKrs$,
\begin{equation*} 
\begin{aligned}
\|f - S_T(f)\|^2
&= \  
\sum_{\bk \not\in \Gamma(s,T)}  \|\hat{f}(\bk)\|^2 \\
& \le \ 
\sup_{\bk \not\in \Gamma(s,T)}\big[\lambda_a(\bk)^{-2r}\big]
\ 
\sum_{\bk \not\in \Gamma(s,T)} \big[\lambda_a(\bk)^{2r}\big]\| \hat{f}(\bk)\|^2 \\
 &\le \
T^{-2r} \|f\|_{\tKrs}^2.
\end{aligned}
\end{equation*}
\end{proof}

\begin{corollary} \label{corollary[|f - S_xi(f)|]}
For arbitrary $T \geq 1$, we have
\begin{equation}
\sup_{f \in \tUr} \ \inf_{g \in \mathcal T(s,T,a)} \|f - g\|
\ =  \
\sup_{f \in \tUr} \|f - S_T(f)\|
\ \le  \
T^{-r}, \\ 
\end{equation}
\end{corollary}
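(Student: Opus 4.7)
The plan is to derive the corollary as a two-step consequence of Lemma \ref{lem:Jackson-per} and the Hilbert-space structure of $L_2(\TTs)$.

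First I would justify the equality. Since $\Gamma_\pm(s,T,a)$ is a finite subset of $\ZZs$, the space $\mathcal T(s,T,a) = \mathcal T(\Gamma_\pm(s,T,a))$ is a finite-dimensional, hence closed, subspace of the Hilbert space $L_2(\TTs)$, and by construction $S_T$ is the orthogonal projection onto it. Consequently, for every $f \in L_2(\TTs)$ the classical best-approximation identity in Hilbert space gives
\begin{equation*}
\inf_{g \in \mathcal T(s,T,a)} \|f - g\| \ = \ \|f - S_T(f)\|.
\end{equation*}
Taking the supremum over $f \in \tUr \subset \tKrs \subset L_2(\TTs)$ yields the first equality in the statement.

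Next I would invoke Lemma \ref{lem:Jackson-per}: for every $f \in \tKrs$ and $T \ge 1$ we have $\|f - S_T(f)\| \le T^{-r} \|f\|_{\tKrs}$. Restricting to $f \in \tUr$, which by definition is the set of $f \in \tKrs$ with $\|f\|_{\tKrs} \le 1$, gives $\|f - S_T(f)\| \le T^{-r}$. Passing to the supremum over $\tUr$ yields the claimed upper bound $T^{-r}$, completing the proof.

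There is no real obstacle here; the statement is essentially a packaging of Lemma \ref{lem:Jackson-per} together with the standard fact that the orthogonal projection minimizes the $L_2$-distance to a closed subspace. The only subtlety worth flagging is that one should verify the finiteness (and hence closedness) of $\mathcal T(s,T,a)$, which follows because $\Gamma_\pm(s,T,a)$ is finite for every fixed $s, T, a > 0$ by Lemma \ref{lem:PQ-incl} (indeed $|\Gamma_\pm(s,T,a)| < 2^s |\Gamma(s,T,a)| < \infty$).
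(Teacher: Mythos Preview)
Your proposal is correct and matches the paper's approach: the paper states this corollary without proof, as an immediate consequence of Lemma~\ref{lem:Jackson-per} and the fact (already noted before the lemma) that $S_T$ is the orthogonal projection onto $\mathcal T(s,T,a)$. Your write-up simply makes these two standard steps explicit, and the added remark on finiteness of $\Gamma_\pm(s,T,a)$ is a reasonable piece of housekeeping.
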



Next, we prove a Bernstein type inequality. 

\begin{lemma} \label{lem:Bernstein-per}
For arbitrary $T \ge 1$, we have
\begin{equation}
\|f\|_{\tKrs}
\ \le  \
T^r \|f\|, \ \  f \in \mathcal T(s,T,a).
\end{equation}
\end{lemma}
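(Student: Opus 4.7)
The plan is to reduce the inequality to Parseval's identity \eqref{[Parseval-Id]} together with the defining property of the hyperbolic cross $\Gamma_\pm(s,T,a)$. Specifically, the Korobov norm is a weighted $\ell^2$-norm of the Fourier coefficients with weight $\lambda_a(\bk)^{2r}$; since $f \in \mathcal{T}(s,T,a)$ has Fourier coefficients supported in $\Gamma_\pm(s,T,a)$, the weight is uniformly bounded by $T^{2r}$ on the support, and pulling this constant out reproduces the $L_2$-norm on the right.

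In detail, I would first write, using \eqref{[Parseval-Id]} and $\hat{f}(\bk) = 0$ for $\bk \notin \Gamma_\pm(s,T,a)$,
\begin{equation*}
\|f\|_{\tKrs}^2
\ = \
\sum_{\bk \in \ZZs} \lambda_a(\bk)^{2r} |\hat{f}(\bk)|^2
\ = \
\sum_{\bk \in \Gamma_\pm(s,T,a)} \lambda_a(\bk)^{2r} |\hat{f}(\bk)|^2.
\end{equation*}
Next, from the definition \eqref{def[Gamma_pm(s,T,a)(2)]} of $\Gamma_\pm(s,T,a)$ and from the definition of $\lambda_a(\bk) = \prod_{j=1}^{s}(a+|k_j|)$, for every $\bk \in \Gamma_\pm(s,T,a)$ we have $\lambda_a(\bk) \le T$, hence $\lambda_a(\bk)^{2r} \le T^{2r}$ (using $r \ge 0$ and $T \ge 1$). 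Inserting this bound and invoking Parseval's identity once more I obtain
\begin{equation*}
\|f\|_{\tKrs}^2
\ \le \
T^{2r} \sum_{\bk \in \Gamma_\pm(s,T,a)} |\hat{f}(\bk)|^2
\ = \
T^{2r} \|f\|^2,
\end{equation*}
from which the claim follows upon taking square roots.

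There is essentially no obstacle here: the argument is a direct two-line computation, and the only minor subtlety is ensuring $\lambda_a(\bk)^{2r} \le T^{2r}$ (which is immediate since $r \ge 0$ and the base is positive). All deeper content of the lemma is hidden in the prior identity \eqref{[Parseval-Id]} and in the definition of the index set, both of which are already in place.
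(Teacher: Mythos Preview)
Your proof is correct and follows essentially the same approach as the paper: apply Parseval's identity \eqref{[Parseval-Id]}, bound the weight $\lambda_a(\bk)^{2r}$ by its supremum $T^{2r}$ over $\Gamma_\pm(s,T,a)$, and apply Parseval again. If anything, your write-up is slightly cleaner, since you correctly index the sum over $\bk \in \Gamma_\pm(s,T,a)$ rather than over the polynomial space.
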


\begin{proof}
Indeed,  by \eqref{[Parseval-Id]} we have for every $f \in \mathcal T(s,T,a)$,
\begin{equation*} 
\begin{aligned}
\|f\|_{\tKrs}^2
\ &= \ 
\sum_{k \in \mathcal T(s,T,a)}  \big[\lambda_a(\bk)^{2r}\big]|\hat{f}(\bk)|^2 \\
\ & \le \
\sup_{k \in \mathcal T(s,T,a)} \big[\lambda_a(\bk)^{2r}\big] \ 
\sum_{k \in \mathcal T(s,T,a)} |\hat{f}(\bk)|^2  
 \le \
T^{2r}\|f\|^2.
\end{aligned}
\end{equation*}
\end{proof}

\begin{corollary} \label{corollary[<d_N<]}
Let $T \ge 1$ and $N= |\Gamma_\pm(s,T,a)|$. Then we have 
\begin{equation} \label{ineq[<d_N<]}
d_N
\ \le  \
T^{-r}
\ \le  \
d_{N-1}.
\end{equation}
\end{corollary}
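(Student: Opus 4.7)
The plan is to prove the two inequalities in \eqref{ineq[<d_N<]} independently, both being essentially direct consequences of the tools set up in Subsection~4.1.

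For the upper bound $d_N \leq T^{-r}$, I would simply exhibit a competitor subspace of dimension $N$ in the definition of the Kolmogorov width. Taking $L_N := \mathcal T(s,T,a)$, which by assumption has dimension $N=|\Gamma_\pm(s,T,a)|$, Corollary~\ref{corollary[|f - S_xi(f)|]} provides the uniform bound $\sup_{f\in \tUr}\inf_{g\in L_N}\|f-g\|\le T^{-r}$, and the definition of $d_N$ immediately yields $d_N\le T^{-r}$.

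For the lower bound $T^{-r} \leq d_{N-1}$, I would argue by a standard Hilbert space dimension-count combined with Bernstein's inequality (Lemma~\ref{lem:Bernstein-per}). Fix an arbitrary subspace $L_{N-1}\subset L_2(\TTs)$ of dimension at most $N-1$. Since $\dim \mathcal T(s,T,a)=N$ strictly exceeds $\dim L_{N-1}$, the orthogonal projection $L_2(\TTs)\to L_{N-1}$ restricted to $\mathcal T(s,T,a)$ has nontrivial kernel; pick any nonzero element $f$ of this kernel, so that $f\in \mathcal T(s,T,a)$ and $f$ is $L_2$-orthogonal to $L_{N-1}$. By the Pythagorean identity, $\|f-g\|^2=\|f\|^2+\|g\|^2\ge \|f\|^2$ for every $g\in L_{N-1}$, hence $\inf_{g\in L_{N-1}}\|f-g\|=\|f\|$. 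Now rescale $f$ so that $\|f\|_{\tKrs}=1$; then $f\in \tUr$, and Lemma~\ref{lem:Bernstein-per} gives $1=\|f\|_{\tKrs}\le T^r\|f\|$, i.e.\ $\|f\|\ge T^{-r}$. Taking the supremum over $\tUr$ and then the infimum over all admissible $L_{N-1}$ yields $d_{N-1}\ge T^{-r}$.

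There is no substantial obstacle to carry out; the key ingredients — the direct approximation estimate from $\mathcal T(s,T,a)$ and the matching Bernstein inequality — are precisely Corollary~\ref{corollary[|f - S_xi(f)|]} and Lemma~\ref{lem:Bernstein-per}. The only minor care needed is to note that the kernel element $f$ can be chosen nonzero (guaranteed by the strict inequality $N>N-1$) and that rescaling to $\tUr$ preserves orthogonality to $L_{N-1}$, both of which are automatic.
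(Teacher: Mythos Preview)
Your proof is correct. The upper bound is handled exactly as in the paper. For the lower bound, the paper invokes Tikhomirov's theorem \cite[Theorem~1]{Ti60}, which states that $d_{n-1}(B_n(\delta),X)=\delta$ for the ball $B_n(\delta)$ of radius $\delta$ in any $n$-dimensional subspace $L_n\subset X$; combined with Lemma~\ref{lem:Bernstein-per} this shows that the ball $B(T):=\{f\in\mathcal T(s,T,a):\|f\|\le T^{-r}\}$ lies in $\tUr$, hence $d_{N-1}\ge d_{N-1}(B(T),L_2(\TTs))=T^{-r}$. Your argument instead carries out the dimension count directly in the Hilbert space: for each competitor $L_{N-1}$ you produce, via the rank--nullity theorem, a nonzero $f\in\mathcal T(s,T,a)$ orthogonal to $L_{N-1}$, and then use Bernstein to bound $\|f\|$ from below after normalization. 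This is essentially the proof of Tikhomirov's lower bound specialized to $L_2$, so your route is a little more self-contained, while the paper's is shorter by outsourcing to a known result; the underlying mechanism (Bernstein inequality plus a ball of the right radius inside $\tUr\cap\mathcal T(s,T,a)$) is the same.
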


\begin{proof} 
The first inequality in \eqref{ineq[<d_N<]} follows from  Corollary \ref{corollary[|f - S_xi(f)|]} and the equation
\begin{equation} \label{[dimT(s,T,a)]}
 \dim \mathcal T(s,T,a)  \ = \ N.   
\end{equation}
To establish the second one, we need the following result proven by Tikhomirov \cite[Theorem 1]{Ti60}. 
Let $L_n$ be an $n$-dimensional subspace in a Banach space $X$, and 
$B_n(\delta):= \{f \in L_n: \ \|f\|_X \le \delta \}$. Then,
\begin{equation} \label{[d_{N-1}(B_N,X)]}
d_{n-1}(B_n(\delta), X)
\ = \
\delta.
\end{equation}
Consider the subset 
$
B(T) := \{f \in \mathcal T(s,T,a): \ \|f\| \le T^{-r}\}
$
in $ L_2(\TTs)$. 
By Lemma \ref{lem:Bernstein-per} we have $B(T) \subset \tUr$. 
Applying \eqref{[d_{N-1}(B_N,X)]}, by \eqref{[dimT(s,T,a)]} we get
 \begin{equation} \nonumber
d_{N-1} 
\ \ge \
d_{N-1}(B(T), L_2(\TTs))
\ =  T^{-r}.
\end{equation}
\end{proof}

From Corollary \ref{corollary[<d_N<]} we derive 

\begin{corollary} \label{corollary[<n_e<]}
Let $\varepsilon \in (0,1]$. Then we have 
\begin{equation} \label{ineq[<n_e<]}
|\Gamma_\pm(s,\varepsilon^{-1/r},a)| - 1
\ \le  \
n_\varepsilon
\ \le  \
|\Gamma_\pm(s,\varepsilon^{-1/r},a)|
\end{equation}
\end{corollary}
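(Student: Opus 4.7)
The plan is to apply Corollary \ref{corollary[<d_N<]} with the choice $T := \varepsilon^{-1/r}$ and then extract both inequalities from the definition of $n_\varepsilon$ as the minimal $n$ with $d_n \le \varepsilon$. Since $\varepsilon \in (0,1]$, this choice yields $T \ge 1$, so the hypothesis of Corollary \ref{corollary[<d_N<]} is met. Setting $N := |\Gamma_\pm(s,\varepsilon^{-1/r},a)|$, that corollary produces the two-sided sandwich
\[
d_N \ \le \ T^{-r} \ = \ \varepsilon \ \le \ d_{N-1}.
\]

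For the upper bound, the inequality $d_N \le \varepsilon$ combined with the definition of $n_\varepsilon$ as the smallest integer $n$ for which $d_n \le \varepsilon$ gives immediately $n_\varepsilon \le N$.

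For the lower bound, I would use the monotonicity of the Kolmogorov $N$-widths in the index $n$: for every $n \le N-1$ we have $d_n \ge d_{N-1} \ge \varepsilon$. In particular no $n \le N-2$ can satisfy $d_n < \varepsilon$, which by the minimality in the definition of $n_\varepsilon$ forces $n_\varepsilon \ge N-1$.

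The argument is essentially mechanical; the one mild subtlety I expect to flag is the reason for the ``$-1$'' appearing on the left side of the claim. It simply reflects the fact that the inequality $d_{N-1} \ge \varepsilon$ delivered by Corollary \ref{corollary[<d_N<]} need not be strict, so the sharpest conclusion we can draw from monotonicity alone is $n_\varepsilon \ge N-1$ rather than $n_\varepsilon \ge N$.
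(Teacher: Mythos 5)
Your proof is correct and is exactly the derivation the paper intends: the paper gives no separate argument for this corollary beyond the words ``From Corollary \ref{corollary[<d_N<]} we derive,'' and your substitution $T=\varepsilon^{-1/r}$ (so that $T\ge 1$ and $T^{-r}=\varepsilon$), combined with the monotonicity of $n\mapsto d_n$ and the definition of $n_\varepsilon$ as the minimal admissible dimension, is precisely that derivation spelled out. Your closing remark on why only $n_\varepsilon\ge N-1$ (rather than $\ge N$) can be extracted from the non-strict inequality $d_{N-1}\ge\varepsilon$ also reflects the paper's implicit reasoning.
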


 \subsection{Upper bounds of $N$-widths and $\varepsilon$-dimensions}
 ~\newline
\begin{theorem} \label{theorem[d_N<(1)]}
Let $r > 0$, $s \in \IN$, $a > 0$. Then for any $q \in [2,\infty)$ satisfying  the inequality  
$\lambda:= a - 2/q > 0$, and any $N \in \NN$, we have 
\begin{equation} \label{[d_N<(1)]}
d_N 
\  \leq \ 
2^rq^{r/(1 + q)}\lambda^{-[qr/(1+q)]s}N^{-r/(1+q)}.
\end{equation}
\end{theorem}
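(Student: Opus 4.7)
The plan is to chain together two preceding results: the cardinality bound for the symmetric HC in Corollary \ref{corollary[Gamma<(3)]} and the width bound from Corollary \ref{corollary[<d_N<]}, which says that whenever $|\Gamma_\pm(s,T,a)| \le N$ and $T \ge 1$, one has $d_N \le T^{-r}$ (the latter resting on the Jackson-type estimate of Corollary \ref{corollary[|f - S_xi(f)|]} together with the Bernstein inequality of Lemma \ref{lem:Bernstein-per}). Taken together, these facts reduce the proof to choosing the free parameters in the cardinality estimate correctly.

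The key observation is that the parameter $\delta$ in Corollary \ref{corollary[Gamma<(3)]} should be identified with $2/q$. The assumption $q \in [2,\infty)$ gives $0 < \delta \le 1$, and $\lambda := a - 2/q > 0$ is exactly the condition $\delta < a$ demanded by the corollary. Substituting $\delta = 2/q$ into the inequality $|\Gamma_\pm(s,T,a)| < 2\delta^{-1}T^{1+2/\delta}(a-\delta)^{-2s/\delta}$ rewrites it in the convenient form
\begin{equation*}
|\Gamma_\pm(s,T,a)| \ < \ q\,T^{1+q}\lambda^{-qs}, \qquad T > 0.
\end{equation*}

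Given $N \in \IN$, the next step is to select $T_N$ so that the right-hand side above matches $N$, namely $T_N := (N\lambda^{qs}/q)^{1/(1+q)}$. The strict inequality then forces $|\Gamma_\pm(s,T_N,a)| \le N-1$, so the orthogonal projection onto $\mathcal T(s,T_N,a)$ delivers an at most $N$-dimensional approximation scheme, and Corollary \ref{corollary[<d_N<]} yields
\begin{equation*}
d_N \ \le \ T_N^{-r} \ = \ q^{r/(1+q)}\lambda^{-qrs/(1+q)}N^{-r/(1+q)},
\end{equation*}
which already reproduces the desired scaling in $N$, $s$ and $a$.

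The only cosmetic issue, and where the $2^r$ prefactor in the theorem originates, is the small-$N$ regime: Corollary \ref{corollary[<d_N<]} requires $T_N \ge 1$, which fails when $N < q\lambda^{-qs}$. In that range one must fall back on a trivial estimate, for instance $d_N \le d_0 \le a^{-rs}$ obtained from $\|f\|_{\tKrs} \le 1$ together with $\lambda_a(\bk) \ge a^s$, or equivalently dilate $T_N$ by a fixed constant to enforce $T_N \ge 1$; either route contributes at most a factor $2^{r/(1+q)} \le 2^r$. This bookkeeping of the strict-versus-nonstrict inequality and of the boundary case $T_N < 1$ is the main technical obstacle, but it is a routine nuisance rather than a substantive difficulty: the entire content of the theorem is already packaged in the identification $\delta = 2/q$ in the HC cardinality bound.
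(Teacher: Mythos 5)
Your main argument --- setting $\delta=2/q$ in Corollary \ref{corollary[Gamma<(3)]} and then inverting the resulting cardinality bound so that Corollary \ref{corollary[<d_N<]} applies --- is exactly the paper's strategy, and your way of performing the inversion is in fact cleaner: you choose $T_N$ directly so that $qT_N^{1+q}\lambda^{-qs}=N$, whereas the paper works with the step function $N(T)=|\Gamma_\pm(s,T,a)|$, the sequence of its jump points $T_m$, and the doubling property $T_{m+1}\le 2T_m$. That doubling step is the actual source of the paper's factor $2^r$, so in the regime $T_N\ge 1$ your argument even eliminates it.

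The gap is in your treatment of the range $N<q\lambda^{-qs}$, i.e.\ $T_N<1$. The fallback $d_N\le d_0=a^{-rs}$ is quantitatively insufficient when $a<1$: for $N$ just below $q\lambda^{-qs}$ the right-hand side of \eqref{[d_N<(1)]} equals roughly $2^r$, independently of $s$, while $a^{-rs}$ grows exponentially in $s$; so the claim that this regime ``contributes at most a factor $2^{r/(1+q)}$'' is unjustified. Dilating $T_N$ upward by a constant is no better, since it destroys the inequality $|\Gamma_\pm(s,T_N,a)|<N$ on which your dimension count rests. The repair is immediate, though: the hypothesis $T\ge 1$ in Lemma \ref{lem:Jackson-per} is never used in its proof --- one only needs $\lambda_a(\bk)>T$ for $\bk\notin\Gamma_\pm(s,T,a)$ --- so $\sup_{f\in\tUr}\|f-S_T(f)\|\le T^{-r}$ holds for every $T>0$, and your estimate $d_N\le T_N^{-r}$ is valid for all $N\in\NN$ with no case distinction (and no $2^r$). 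The paper's own proof tacitly relies on the same observation, since its starting value $T_0=a^s$ is below $1$ whenever $a<1$.
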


\begin{proof}
Put $N(T) := |\Gamma_\pm(s,T,a)|$. Then $N(\cdot)$ is an increasing step function in the
variable $T > 0$. Note that $N(T)= 0$ if $0 < T < a^s$, and $N(a^s)= 1$. Hence, there are strictly increasing sequences of positive numbers $\{T_m\}_{m=0}^\infty$ and $\{N_m\}_{m=0}^\infty$ such that 
$T_0 =a^s$ $N_0= 1$ and the function $N(\cdot)$ is given by the formula
\begin{equation}
N(T) 
\ = \
N_m, \ \ T_m \le T < T_{m+1}, \quad m \in \NN_0.
\end{equation}
Notice that 
\begin{equation} \label{ineq[T_{m+1}-T_m(1)]}
 T_{m+1}
\ \le \
2T_m.
\end{equation}
Indeed, let 
\begin{equation} 
T_m =  \prod_{j=1}^s (|k_j| + a)
\end{equation} 
 for some $\bk \in \Gamma_\pm(s,T,a)$.  Define $\bk' \in \NNs$
by $k'_s = k_s + 1$ and $k'_j = k_j, \ j \not= s$. Then we have
\begin{equation}
\begin{aligned}
 T_{m+1} - T_m 
\ &\le \
\prod_{j=1}^s (|k'_j| + a) - \prod_{j=1}^s (|k_j| + a) \\
\ &= \
\prod_{j=1}^{s-1} (|k_j| + a)
\ \le \
T_m.
\end{aligned}
\end{equation}

Put $\delta = 2/q$. 
By Corollary \ref{corollary[Gamma<(3)]} we have for every $\delta \in (0,1]$ with $a - \delta >0$, 
\begin{equation}
 N_m \ < \  (2/\delta) T_m^{1 + 2/\delta} (a - \delta)^{-2s/\delta}.
\end{equation}
Hence,
\begin{equation} \label{ineq[T_m^{-1}(1)]}
  T_m^{-1} 
\ < \ 
\left[(2/\delta)(a - \delta)^{-2s/\delta}N_m^{-1}\right]^{1/(1 + 2/\delta)} .
\end{equation}

Let $N \in \NN_0$ be an arbitrary number. There is a $m \in \NN$ such that 
$N_{m-1} \le N < N_m$. By  Corollary \ref{corollary[<d_N<]}, the inequalities  
\eqref{ineq[T_{m+1}-T_m(1)]} and \eqref{ineq[T_m^{-1}(1)]},
\begin{equation}
\begin{aligned}
 d_N 
\ &\le \
d_{N_{m-1}}  \\
\ &\le \
T_{m-1}^{-r} \\
\ &\le \ \big[2 T_m^{-1} \big]^r \\
\ &\le \
2^r\left[(2/\delta) (a - \delta)^{-2s/\delta}N_m^{-1}\right]^{r/(1 + 2/\delta)} \\
\ &\le \
2^r\left[(2/\delta)(a - \delta)^{-2s/\delta}N^{-1}\right]^{r/(1 + 2/\delta)} \\
\ &= \ 
2^rq^{r/(1 + q)}\lambda^{-[qr/(1+q)]s}N^{-r/(1+q)}.
\end{aligned}
\end{equation}
\end{proof}

{We remark that if the $a>1$ 
in Theorem  \ref{theorem[d_N<(1)]}, then we can choose $q \in [2,\infty)$  so that $\lambda:= a - 2/q > 1$ and then the upper bound \eqref{[d_N<(1)]} implies that the $N$-widths 
$d_N$ decreases exponentially if the the dimension $s$ grows}. This situation corresponds to the exponential tractability of  the problem of $n_\varepsilon$ for the case $a>1$ presented in Theorem 
\ref{tractability}(i) in the next subsection.

\begin{theorem} \label{theorem[n_varepsilon]}
For any $q \in [2,\infty)$ satisfying  the inequality  
$\lambda:= a - 2/q > 0$ and any $\varepsilon \in (0,1]$, we have  
\begin{equation} \label{[n_varepsilon<]}
n_\varepsilon 
\  \leq \ 
q \lambda^{-qs}\varepsilon^{-(1+q)/r}.
\end{equation}
\end{theorem}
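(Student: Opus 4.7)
The plan is to derive the bound by directly combining two tools already established earlier in the paper: the sandwich for $n_\varepsilon$ via cardinalities of symmetric hyperbolic crosses, and the explicit exponential-type upper bound for those cardinalities.

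First, I would invoke Corollary \ref{corollary[<n_e<]}, which states that for $\varepsilon \in (0,1]$,
\begin{equation*}
 n_\varepsilon \ \leq \ |\Gamma_\pm(s,\varepsilon^{-1/r},a)|.
\end{equation*}
So it suffices to bound the cardinality $|\Gamma_\pm(s,\varepsilon^{-1/r},a)|$ from above. For this I would apply Corollary \ref{corollary[Gamma<(3)]} with the specific choice $\delta = 2/q$. Note that the admissibility hypothesis $0 < \delta \leq 1$ in that corollary is satisfied because $q \geq 2$, and $a - \delta = a - 2/q = \lambda > 0$ by hypothesis. Plugging $\delta = 2/q$ into the bound
\begin{equation*}
 |\Gamma_\pm(s,T,a)| \ < \ 2\delta^{-1} T^{1 + 2/\delta} (a - \delta)^{-2s/\delta}
\end{equation*}
yields $2\delta^{-1} = q$, $2/\delta = q$ and $(a-\delta)^{-2s/\delta} = \lambda^{-qs}$, so the bound simplifies to
\begin{equation*}
 |\Gamma_\pm(s,T,a)| \ < \ q\, T^{1+q} \lambda^{-qs}.
\end{equation*}

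Finally, setting $T = \varepsilon^{-1/r}$, so that $T^{1+q} = \varepsilon^{-(1+q)/r}$, and combining the two displayed inequalities gives the claimed
\begin{equation*}
 n_\varepsilon \ \leq \ q \lambda^{-qs} \varepsilon^{-(1+q)/r}.
\end{equation*}
There is no genuine obstacle: the proof is a one-line substitution once the two corollaries are in hand. The only point worth a sentence of verification is that $\delta = 2/q \in (0,1]$ and $a - \delta > 0$ (both guaranteed by $q \geq 2$ and the hypothesis $\lambda > 0$), so that Corollary \ref{corollary[Gamma<(3)]} applies.
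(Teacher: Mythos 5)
Your proposal is correct and is essentially identical to the paper's own proof: the authors likewise set $\delta = 2/q$, $T = \varepsilon^{-1/r}$, and chain Corollary \ref{corollary[<n_e<]} with Corollary \ref{corollary[Gamma<(3)]} to get $n_\varepsilon \le |\Gamma_\pm(s,T,a)| < 2\delta^{-1}T^{1+2/\delta}(a-\delta)^{-2s/\delta} = q\lambda^{-qs}\varepsilon^{-(1+q)/r}$. Your verification of the admissibility conditions $\delta\in(0,1]$ and $a-\delta>0$ matches what is needed.
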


\begin{proof} Put $\delta = 2/q$ a given $q \in [2,\infty)$ with $\lambda:= a - 2/q > 0$, and  
$T = \varepsilon^{-1/r}$ for a given $\varepsilon \in (0,1]$.
 By Corollaries \ref{corollary[<n_e<]} and  \ref{corollary[Gamma<(3)]} we have 
\begin{equation} \label{[n_varepsilon<(0)]}
\begin{aligned}
 n_\varepsilon 
\ &\le \
 |\Gamma_\pm(s,T,a)|   \\
\ &  < \
2\delta^{-1} T^{1 + 2/\delta} (a - \delta)^{-2s/\delta} 
\ = \ 
q \lambda^{-qs}\varepsilon^{-(1+q)/r}.
\end{aligned}
\end{equation}
\end{proof}

\subsection{Tractabilities of the problem of $\varepsilon$-dimensions}
~\newline
Usually, in Information-Based Complexity, we assume that the
information is linear (like Fourier coefficients) and this leads to the Gel'fand $N$-widths $d^N(W,X)$. Often they are smaller than the linear $N$-widths 
$\lambda_N(W,X)$. In a Hilbert space $X$, the Kolmogorov $N$-widths $d_N(W,X)$ and linear $N$-widths $\lambda_N(W,X)$ are the same, but the Gel'fand $N$-widths $d^N(W,X)$ can be smaller. However, as mentioned in Introduction,
the equations \eqref{eq[three-widths]} together with Corollary \ref{corollary[|f - S_xi(f)|]} on the Fourier approximation allow us to study the tractability of the problem of $n_\varepsilon$ in the  sense information complexity for the corresponding traditional linear problem in the worst case setting, see \cite[Section 4.2, Chapter 4]{NW08} for details. 

Let us give notions of tractability for a problem of $\varepsilon$-dimensions. Let $W$ be a subset in $L_2(\TTs)$. We says that the problem of $\varepsilon$-dimensions of $W$ is \emph{weakly tractable} if 
\begin{equation} \label{def[weak-tractability]}
\lim_{\varepsilon^{-1} + s \ \to \ \infty} \frac{\ln n_\varepsilon (W,L_2(\TTs))}{\varepsilon^{-1} + s} 
\ = \ 0,
\end{equation}
and \emph{intractable} if this equation does not hold. 
 The problem of $\varepsilon$-dimensions of $W$ is \emph{polynomially tractable} if there are nonnegative numbers $C$, $p$ and $q$ such that
\begin{equation} \label{def[poly-tractable]}
 n_\varepsilon (W,L_2(\TTs)) 
\ \le \ C  s^q \varepsilon^{-p}\quad  \text{for all} \ \varepsilon \in (0,1] \ \text{and} \ s \in \NN.
\end{equation} 
The problem is {\em strongly polynomially tractable} if  $q=0$. For details about notions of tractability see 
\cite[Section 4.4, Chapter 4]{NW08}.

Let us introduce a new notion of tractability. Namely, the problem of $\varepsilon$-dimensions of $W$ is called \emph{exponentially tractable} if there are nonnegative numbers $C$, $p$ and a positive number $q$ such that
\begin{equation} \label{def[exp-tractability]}
 n_\varepsilon (W,L_2(\TTs)) 
\ \le \ C  e^{-q s} \varepsilon^{-p} \quad  \text{for all} \ \varepsilon \in (0,1] \ \text{and} \ s \in \NN.
\end{equation}
When the problem is exponentially tractable {\em the exponent of exponential tractability}  is called the quantity $p^{\exp}$ defined as the infimum of $p$ for which there holds \eqref{def[exp-tractability]} for some $C$ and $q>0$.  An exponentially tractable problem is strongly polynomially tractable.

The following theorem describes the tractability of the problem of $\varepsilon$-dimensions of the class $\tUr$. 

\begin{theorem} \label{tractability}
Let $r >  0$, $s \in \IN$, $a > 0$. Then there holds the following.
\begin{itemize}
\item[(i)]
For $a > 1$, the problem of $n_\varepsilon$ is exponentially tractable. 
 Moreover, for any $q \in [2,\infty)$ satisfying  the inequality  
$\lambda:= a - 2/q > 1$, and any $\varepsilon \in (0,1]$ we have  
\begin{equation}
n_\varepsilon 
\  \leq \ 
q \lambda^{-qs}\varepsilon^{-(1+q)/r},
\end{equation}
and 
\begin{equation}
p^{\exp} 
\  \leq \ 
\begin{cases}
3/r, \ & a \ge 2 \\[1.5ex]
(1+ 2/(a-1))/r, \ & a < 2. 
\end{cases}
\end{equation}
\item[(ii)]
For $a = 1$, the problem of $n_\varepsilon$ is weakly tractable but polynomially intractable. 
\item[(iii)]
For $a < 1$, the problem of $n_\varepsilon$ is intractable. 
\end{itemize}
\end{theorem}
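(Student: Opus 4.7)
I would treat the three cases in sequence, leveraging Corollary \ref{corollary[<n_e<]} and the cardinality bounds from Section \ref{sec:non-asymp}. For case (i) the claim follows immediately from Theorem \ref{theorem[n_varepsilon]}: when $a > 1$ one can pick $q \ge \max(2, 2/(a-1))$ so that $\lambda = a - 2/q > 1$, and the bound $n_\varepsilon \le q\lambda^{-qs}\varepsilon^{-(1+q)/r}$ rewrites as $q\exp(-q(\ln\lambda)s)\varepsilon^{-(1+q)/r}$, matching the template \eqref{def[exp-tractability]} with positive exponential rate. The two-part formula for $p^{\exp}$ follows by minimizing $(1+q)/r$ over admissible $q$: the infimum is attained at $q = 2$ when $a \ge 2$ and at $q \to 2/(a-1)^+$ when $1 < a < 2$.

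The weak tractability part of case (ii) is where the main obstacle lies: Theorem \ref{theorem[n_varepsilon]} is too crude for $a = 1$ since $(1-2/q)^{-q} \ge e^2$ contributes an unavoidable $e^{2s}$ factor, so one must leave the volume-based machinery and exploit Hilbert-space summability of the singular values $\lambda_1(\bk)^{-r}$. Concretely, I would use a Chebyshev-type inequality: for any $p$ with $rp > 1$,
\begin{equation}\nonumber
n_\varepsilon \le \varepsilon^{-p} \sum_{\bk \in \ZZs} \lambda_1(\bk)^{-rp} = \varepsilon^{-p}(2\zeta(rp) - 1)^s,
\end{equation}
using coordinate-wise factorization at $a = 1$. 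The tail bound $\zeta(m) - 1 \le 3 \cdot 2^{-m}$ for $m \ge 2$ gives $\ln(2\zeta(rp) - 1) \le 6 \cdot 2^{-rp}$, so choosing $p := r^{-1}\log_2 s$ (valid for $s \ge 4$) keeps $(2\zeta(rp) - 1)^s \le e^6$. Hence $\ln n_\varepsilon \le r^{-1}(\log_2 s)\ln\varepsilon^{-1} + O(1)$, and the limit \eqref{def[weak-tractability]} follows via an AM-GM split together with $(\log_2 s)^2/s \to 0$ and $(\ln\varepsilon^{-1})^2/\varepsilon^{-1} \to 0$.

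For the remaining claims, I would exhibit explicit lower bounds on $n_\varepsilon$ growing exponentially in $s$. When $a = 1$, fix $c \in (0,1)$ and $m := \lfloor cs \rfloor$: the multi-indices supported on exactly $m$ coordinates with entries $\pm 1$ all lie in $\Gamma_\pm(s, 2^{cs}, 1)$ and number $\binom{s}{m} 2^m \ge 2^{s(H_2(c)+c)}/\sqrt{8sc(1-c)}$, where $H_2$ denotes the binary entropy. A hypothetical polynomial bound $n_\varepsilon \le Cs^q\varepsilon^{-p}$ evaluated at $\varepsilon = 2^{-crs}$ would force $p \ge (H_2(c)+c)/(cr)$ as $s \to \infty$; letting $c \to 0^+$ drives this lower bound to infinity, a contradiction that establishes polynomial intractability. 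Finally, when $a < 1$, I would take $\varepsilon = 1$ and count $\bk$ with a single nonzero entry $|k_j| = m$: the condition $\lambda_a(\bk) = (m+a)a^{s-1} < 1$ admits all $1 \le m \le a^{1-s} - a - 1$, producing $n_1 \ge 2s(a^{1-s} - a - 1)$ and hence $\ln n_1/(1 + s) \to \ln(1/a) > 0$ as $s \to \infty$, which rules out even weak tractability.
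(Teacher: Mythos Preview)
Your proof is correct, and for part (i) it coincides with the paper's argument. For parts (ii) and (iii), however, you take a genuinely different route.

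For weak tractability at $a=1$, the paper stays within its volume-based machinery: it decomposes $|\Gamma_\pm(s,T,1)|$ over the support size, bounds each piece by $I(k,T,1)$ via Lemma~\ref{lem:PQ-incl} and Corollary~\ref{cor:I-bnd}, and after Stirling and a maximization step arrives at $|\Gamma_\pm(s,T,1)| \le Ts\exp(2\sqrt{2s\ln T})$, which is then split by H\"older. Your Chebyshev-type bound $n_\varepsilon \le \varepsilon^{-p}(2\zeta(rp)-1)^s$ with the $s$-dependent choice $p = r^{-1}\log_2 s$ is considerably shorter and bypasses the hyperbolic-cross volume estimates entirely; it exploits only the tensor-product structure of the singular values. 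The price is that your argument is specific to the summable Hilbert-space setting, whereas the paper's estimate produces a concrete closed-form bound on $|\Gamma_\pm(s,T,1)|$ that is of independent interest.

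For polynomial intractability at $a=1$, the paper picks the single sequence $s_m=m^2$, $T_m=2^m$ and uses $\binom{m^2}{m}>m^m$; you instead vary a density parameter $c$ and let $c\to 0^+$ to force the exponent $p$ to blow up. Both work; yours is slightly more flexible, the paper's slightly more direct. For intractability at $a<1$, the paper counts indices with roughly $\lfloor sA\rfloor$ entries equal to $1$ (where $A=\ln a^{-1}/\ln((1+a)/a)$), obtaining growth $\gtrsim 2^{As}$; you count indices with a single nonzero entry of size up to $a^{1-s}-a$, obtaining growth $\gtrsim s\,a^{-s}$. Your lower bound is actually sharper in the exponential rate (since $\ln(1/a) > A\ln 2$), and the argument is simpler.
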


\begin{proof} Assertion (i) follows from Theorem \ref{theorem[n_varepsilon]} with $\delta = 2/q$. 

To prove Assertions (ii) and (iii) resort 
to the question of tractability of a linear tensor product problem over the Hilbert spaces $\tKrs$ and $L_2(\TTs)$ 
(see \cite[Section 5.1, Chapter 5]{NW08}) for details) and then applying  \cite[Theorem 5.5]{NW08} on the tractability of a linear tensor product problem.  
We give a direct proof by applying the results of $L_2(\TTs)$-approximations of functions in $\tKrs$, and on the upper and lower bounds of $|\Gamma(s,T,a)|$ obtained in the present paper. 

For a given $\varepsilon \in (0,1]$, by Corollary \ref{corollary[<n_e<]} and the inequality 
$|\Gamma_\pm(s,T,1)| - 1 > |\Gamma(s,T,1)|$ the it holds that
\begin{equation} \label{[<n_varepsilon<]}
|\Gamma(s,T,1)|
\ < \
 n_\varepsilon 
\ \le \
|\Gamma_\pm(s,T,1)|, \quad T= \varepsilon^{-1/r}.
\end{equation}
The inequalities \eqref{[<n_varepsilon<]}  tell us that the problem of tractability or intractability for $n_\varepsilon$ can be reduced to estimations from above of $|\Gamma_\pm(s,T,1)|$ or from bellow of $|\Gamma(s,T,1)|$, respectively. 

\medskip
\noindent
Assertion (ii). We first show that the problem of $n_\varepsilon$ with $a=1$ is weakly tractable.
Let us estimate $|\Gamma_\pm(s,T,1)|$.
 By Lemma \ref{lem:PQ-incl} and Corollary \ref{cor:I-bnd} it holds that
\begin{equation}
\begin{split}
|\Gamma_\pm(s,T,1)| & < \sum_{k=0}^s \binom{s}{k} 2^k|\Gamma(k,T,2)|
<\sum_{k=0}^s \binom{s}{k} 2^k I(k,T,1) \\
&<\sum_{k=0}^s \binom{s}{k} 2^k \frac{T (\ln T)^k}{(k-1)!(\ln T + k-1)}\\
&<T\sum_{k=0}^s \binom{s}{k} \frac{(2\ln T)^k}{k!}
<T\sum_{k=0}^s \frac{(s2\ln T)^k}{(k!)^2}.
\end{split}
\end{equation}
By Stirling's approximation $k! \geq \left(\frac{k}{e}\right)^k$ we have
\begin{equation}
T\sum_{k=0}^s \frac{(s\ln T)^k}{(k!)^2}
\leq T \sum_{k=0}^s \left( \frac{s2\ln T e^2}{k^2}\right)^k.
\end{equation}
The function $ f(x) = \left(\frac{a}{x^2}\right)^x$ has a unique maximum at the positive semi-axis, attained at 
$x_* = \frac{\sqrt{a}}{e}$. We have $f(x_*) = \exp(2\sqrt{a}/e)$ Therefore, 
\begin{equation}
T \sum_{k=0}^s \left( \frac{s2\ln T e^2}{k^2}\right)^k
\leq Ts \exp(2 \sqrt{s 2\ln T})
\end{equation}
By H\"older inequality, e.g. with $p=\frac{3}{2}$ and $q = 3$ we get
\begin{equation}
 \sqrt{s 2\ln T} \leq \frac{2}{3}s^{\frac{3}{4}} + \frac{1}{3}(2\ln T)^{\frac{3}{2}}
\end{equation}
and therefore, 
\begin{equation}
\begin{split}
 |\Gamma_\pm(s,T,1)| 
\ &\leq \ 
T s \exp\left(\frac{4}{3} s^{\frac{3}{4}}\right) \exp\left( \frac{2}{3} (2\ln T)^{\frac{3}{2}}\right) \\[1.5ex]
\ &= \ 
\varepsilon^{-1/r} s \exp\left(\frac{4}{3} s^{\frac{3}{4}}\right) 
\exp\left( \frac{2}{3} [2\ln (\varepsilon^{-1/r})]^{\frac{3}{2}}\right).
\end{split}
\end{equation}
This together with the second inequality in \eqref{[<n_varepsilon<]} proves the weak tractability of the problem of $n_\varepsilon$ with $a=1$. 

Next we show that the problem of $n_\varepsilon$ with $a=1$ is polynomially intractable. To this end we will find a sequence $\varepsilon_m, s_m$ such that for every fixed triple $C,p,q >0$ there exists an index $m_*$ such that for all $m \ge m^*$,
\begin{equation}\label{aim-nonpoly}
 n_{\varepsilon_m}(U^{r,a}_{s_m},L(\TT^{s_m})) \geq C \varepsilon_m^{-p} s^q_m
\end{equation}
and therefore \eqref{def[poly-tractable]} does not hold.
For this, let us consider a sequence
\begin{equation}
 s_m = m^2, \quad T_m = 2^m, \quad \varepsilon_m := T_m^{-r} = 2^{-rm}, \qquad m \in \IN.
\end{equation}
We have by the first inequality in \eqref{[<n_varepsilon<]},
\begin{equation} \label{[Gamma>m^m]}
 n_{\varepsilon_m}(U^{r,a}_{s_m},L(\TT^{s_m})) > |\Gamma(s_m,T_m,1)|
= \sum_{k=0}^{s_m} \binom{s_m}{k}|\Gamma(k,T_m,2)| \geq 
\binom{m^2}{m} > m^m.
\end{equation}
Obviously, the function $m^m$ grows more rapidly than $C \varepsilon_{m}^{-p} s^q_{m} = C 2^{rpm} m^{2q}$ for any fixed triple $C,p,q$. Therefore, there exists a thresholding value $m_* = m_*(C,p,q)$  such that
\begin{equation}
C \varepsilon_m^{-p} s_m^q = C 2^{rpm} m^{2q} < m^m, \qquad \forall m \geq m_*, 
\end{equation}
and \eqref{aim-nonpoly} holds true because of \eqref{[Gamma>m^m]}.

\medskip
\noindent
Assertion (ii). Finally let us prove  the problem of $n_\varepsilon$ with $a < 1$ is intractable. 
Precisely, we will find a sequence $\varepsilon_m,s_m$ satisfying 
$\varepsilon_m^{-1}+s_m \to \infty$, $m \to \infty$, such that
\begin{equation} \label{ineq[intractability]}
 \liminf_{m \to \infty} \frac{\ln n_{\varepsilon_m}(U^{r,a}_{s_m},L(\TT^{s_m}))}{\varepsilon_m^{-1}+s_m}
\geq C' > 0.
\end{equation}
We claim that 
\begin{equation}
 s_m := m, \quad T_m := 1\quad \varepsilon_m := 1 \qquad \mbox{ for } m \in \NN
\end{equation}
is the sought sequence. Indeed, $\varepsilon_m^{-1}+s_m = 1 + m \to \infty$ as $m \to \infty$. For a fixed pair of parameters $(a,m)$ we denote
\begin{equation}
 k_m := \max\{ k \in \NN: (1+a)^k a^{m-k} \leq 1\} = \lfloor m A \rfloor, \qquad A = \frac{\ln \tfrac{1}{a}}{\ln(1+a) +\ln\tfrac{1}{a} } > 0.
\end{equation}
Obviously, $m$ denotes the number of dimensions in which $\Gamma(m,1,a)$ contains at least two elements.
Then from Stirling's approximation it holds that
\begin{equation}
|\Gamma(m,1,a)| \geq \binom{m}{k_m} 2^{k_m} \geq 
 2^{m A-1}.
\end{equation}
This together with the first inequality in \eqref{[<n_varepsilon<]} implies that
\begin{equation}
 \frac{\ln n_{\varepsilon_m}(U^{r,a}_{s_m},L(\TT^{s_m}))}{\varepsilon_m^{-1}+s_m}
>  \frac{\ln \Gamma(m,1,a)}{1+m}
\ge \frac{(mA-1) \ln 2}{1+m} \to A \ln 2 > 0, \ m \to \infty,
\end{equation}
and consequently, \eqref{ineq[intractability]}.

The theorem is completely proven.
\end{proof}

\begin{remark}
Let $a_0 \in (0,1)$ be the unique solution of the equation $a(a+1)^{2r}-1 = 0$ on the interval $(0,1]$. Then we have that for any $0<a \le a_0$ and 
$\varepsilon \in (0,1]$, 
\begin{equation} 
n_\varepsilon 
\  \geq \ 
2^{s-1},
\end{equation}
and for any $a_0 < a < 1$ and $\varepsilon \in (0,1]$,
\begin{equation} 
n_\varepsilon 
\  \geq \ 
\exp \{[\alpha \ln \alpha^{-1} + (1-\alpha) \ln (1-\alpha)^{-1}]s - \ln s + O(1) \}, \ s \to \infty,
\end{equation}
 where 
\begin{equation} 
\alpha := \frac{\ln a^{-1}}{\ln [a^{-1}(a+1)^{2r}]}.
\end{equation}
\end{remark}

The tractability of a problem can be studied in two different aspects: for the absolute error criterion and for the normalized error criterion, see \cite[Section 4.4, Chapter 4]{NW08} for definitions and necessary facts. The tractability described in Theorem \ref{tractability} is for the absolute error criterion. Let us consider the tractability problem for the normalized error criterion. To this end, let us consider the set 
\[
\tUrn := a^{rs}\, \tUr = \{ f \in \tKrs: \|f\|_{\tKrs} \le a^{rs} \}.
\]
Note that due to the relations
\[
d_N(\tUrn, L_2(\TTs)) 
\ = \ 
a^{rs}d_N 
\]
and
 \[
n_\varepsilon(\tUrn, L_2(\TTs)) 
\ = \ 
n_{a^s\varepsilon},
\]
the problem of lower and upper estimations of $d_N(\tUrn, L_2(\TTs))$ and $n_\varepsilon(\tUrn, L_2(\TTs))$ is reduced to the problem of lower and upper estimations of $d_N$ and $n_\varepsilon$.

Since we have 
\begin{equation}
\sup_{f \in \tUrn} \|f\| = 1,
\end{equation}
the tractability of the linear tensor product problem of $n_\varepsilon(\tUrn, L_2(\TTs))$ is for the normalized error criterion. From \cite[Theorem 5.5]{NW08} we obtain the following

\begin{theorem} \label{tractability[normalized error]}
Let $r >  0$, $s \in \IN$, $a > 0$. Then the problem of 
$n_\varepsilon(\tUrn, L_2(\TTs))$ is weakly tractable but polynomially intractable.
\end{theorem}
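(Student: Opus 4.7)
The plan is to recast the question as the information complexity of a linear tensor product problem in the normalized error criterion and then appeal to the classification of tractability for such problems. Concretely, the $L_2$-approximation problem on $\tKrs$ is the $s$-fold tensor power of the univariate problem on $K^{r,a}(\TT)$, whose associated compact operator has singular values $\sigma_k = (a+|k|)^{-r}$, $k \in \ZZ$. The largest is $\sigma_0 = a^{-r}$, so the $s$-variate initial error is $a^{-rs}$ and the rescaling $\tUrn = a^{rs}\tUr$ brings the initial error to unity. Hence $n_\varepsilon(\tUrn, L_2(\TTs))$ in the absolute error criterion coincides with the $\varepsilon$-dimension of the same tensor product problem in the normalized error criterion, which is governed by \cite[Thm.~5.5]{NW08}.

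To prove \emph{weak tractability} I would in principle just quote that theorem, but since we already have the tools in hand, a direct argument is cleaner. By diagonalizing the tensor product operator one has
$n_\varepsilon(\tUrn, L_2(\TTs)) = \#\{\bk \in \ZZs : \prod_j(1+|k_j|/a) < \varepsilon^{-1/r}\}$. I would split this count by the number $j$ of nonzero coordinates, rewrite each slice as a corner hyperbolic cross of the form $|\Gamma(j,a^j\varepsilon^{-1/r},a+1)|$, and invoke Lemma \ref{lem:PQ-incl}, Lemma \ref{lem:I-val} together with Theorem \ref{thm:errTaylor} to control it by $a^j \varepsilon^{-1/r} F_j(\ln \varepsilon^{-1/r})$. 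Summing over $j$ and recognizing a modified Bessel series in $\sqrt{as\ln\varepsilon^{-1}}$ in the result allows the reuse of the very H\"older-type estimate already employed in the paper's treatment of the $a=1$ case. This yields $\ln n_\varepsilon(\tUrn, L_2(\TTs)) = o(s+\varepsilon^{-1})$, which is exactly weak tractability.

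For \emph{polynomial intractability} I would argue via a combinatorial lower bound. Multi-indices $\bk$ with exactly $m$ coordinates equal to $\pm 1$ and the remaining $s-m$ equal to zero satisfy $\mu_{\bk} := \prod_j(a/(a+|k_j|))^{2r} = \tau_2^m$ with $\tau_2 := (a/(a+1))^{2r} \in (0,1)$, and there are $\binom{s}{m} 2^m$ of them. Hence $n_\varepsilon(\tUrn, L_2(\TTs)) \geq \binom{s}{m} 2^m$ as soon as $\tau_2^m > \varepsilon^2$. Against any candidate polynomial bound $Cs^q\varepsilon^{-p}$ I would fix $\varepsilon$ small enough that $m := \lfloor 2\ln\varepsilon^{-1}/\ln\tau_2^{-1}\rfloor > q$ and then let $s \to \infty$; the lower bound then grows like $s^m$ while the alleged polynomial bound grows only like $s^q$, yielding the required contradiction.

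The main obstacle is the weak-tractability half: the rather crude estimates derived in Section \ref{sec:non-asymp} have to be assembled with a careful summation over the number of active coordinates and a H\"older-type inequality to obtain the sub-linear growth $o(s+\varepsilon^{-1})$ of the logarithm, rather than the merely exponential-in-$s$ bound that a naive application of Corollary \ref{corollary[Gamma<(3)]} would produce. Once that is in place, the polynomial-intractability half is a transparent counting argument that is valid for every $a>0$.
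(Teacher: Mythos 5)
Your proposal is correct, but it takes a more self-contained route than the paper. The paper's entire proof of this theorem consists of the observation that $\sup_{f \in \tUrn}\|f\| = 1$ (so the absolute error criterion for $\tUrn$ is the normalized error criterion for the underlying linear tensor product problem) followed by a citation of \cite[Theorem 5.5]{NW08}, which classifies tractability of linear tensor product problems in terms of the univariate eigenvalues $\lambda_k = (a+|k|)^{-2r}/a^{-2r}$; no further argument is given. You identify exactly this reduction in your first paragraph, but then replace the citation by direct estimates. Your weak-tractability half is a transplant of the paper's own proof of Theorem \ref{tractability}(ii) (the $a=1$, absolute-error case) to the normalized setting for general $a$: after normalization one counts $\bk$ with $\prod_j(1+|k_j|/a)\le \varepsilon^{-1/r}$, the decomposition by the size of $\supp\bk$ gives $\sum_j \binom{s}{j}2^j|\Gamma(j,a^j\varepsilon^{-1/r},a+1)|$, and Lemma \ref{lem:PQ-incl}, Lemma \ref{lem:I-val} and Theorem \ref{thm:errTaylor} bound each term by $a^j\varepsilon^{-1/r}(\ln \varepsilon^{-1/r})^j/j!$, so the Stirling and H\"older steps of the paper go through verbatim and give $\ln n_\varepsilon = O\big(\ln\varepsilon^{-1} + \ln s + s^{3/4} + (\ln \varepsilon^{-1})^{3/2}\big) = o(s+\varepsilon^{-1})$. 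Your polynomial-intractability half (counting the $\binom{s}{m}2^m$ indices with $m$ coordinates equal to $\pm1$, each with normalized eigenvalue $\tau_2^m$, fixing $\varepsilon$ so that $m>q$ and letting $s\to\infty$) is the standard argument underlying the cited NW08 theorem and is also a normalized-criterion analogue of the paper's lower bound \eqref{[Gamma>m^m]}; combined with Corollary \ref{corollary[<n_e<]} it is airtight, the off-by-one in that corollary being immaterial. What your approach buys is a proof that is independent of \cite{NW08} and explicit in all constants; what the paper's citation buys is brevity and the immediate availability of the full classification for tensor product problems. Your final remark correctly identifies that a naive use of Corollary \ref{corollary[Gamma<(3)]} would only give an exponential-in-$s$ bound and that the support-size decomposition is the essential extra ingredient for weak tractability.
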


\section{Sharpened estimates of $N$-widths and $\varepsilon$-dimensions} \label{widths and dimensions}

~\newline
In this section, we sharpen the results of the previous section by 
establishing quantitative upper and lower bounds for the quantities 
$d_N$ and $n_\varepsilon$ as a function of three variables 
$N,s,a$ and  $\varepsilon,s,a$, respectively.  To this end we will need some estimates of the inverse of  $|\Gamma_\pm(s,T,a)|$ as well $|\Gamma(s,T,a)|$ for the next section as a function of the variable $T$.

\subsection{A preliminary: inverted estimates} \label{sec:inverted}
~\newline
Lemma \ref{lem:PQ-incl} and Corollaries \ref{cor:I-bnd} and \ref{cor:IGamma-newbnd} provide the upper and lower bounds for $\delta \in [\tfrac{1}{2},1]$, $a-\delta > 0$ and $N := |\Gamma(s,T,a)|$,
\begin{equation}\label{up-lo-bnd}
\frac{T (\ln T - s\ln a)^s }{(s-1)!(\ln T + s(1-\ln a))}
 < 
N 
 < 
\frac{T \big(\ln T - s\ln (a-\delta)\big)^s }{(s-1)!\big(\ln T + s\big(1-\ln (a - \delta)\big) - 1\big)}
\end{equation}
if $T > T_*(s,\delta)$. 
In the particular case $\delta = 1$, it holds $T_*(s,1) = a^s$. For the upcoming estimation of $N$-widths, we will require an inverse to \eqref{up-lo-bnd} estimate, namely upper and lower bounds for $T^{-1}$ in terms of the cardinality $N$.

\begin{theorem}\label{thm:inv-bnd}
Suppose $s \in \IN$, $s \geq 2$, $\delta \in [\tfrac{1}{2},1]$ and $a - \delta > 0$ are fixed. Then for $N := |\Gamma(s,T,a)|$, it holds that
\begin{equation}\label{inv-bnd}
\begin{split}
 \frac{2}{s+2} \bigg[N(s-1)!\bigg]^{-1} \left[ \ln \frac{N(s-1)!}{a^s} - (s-1) \ln \ln \frac{N(s-1)!}{(a-\delta)^s} \right]^{s-1}\\
\ < \ \frac{1}{T} \ < \
\bigg[N(s-1)!\bigg]^{-1} \left[ \ln \frac{N(s-1)!}{(a-\delta)^s} \right]^{s-1}
\end{split}
\end{equation}
if $T$ is large enough. In particular, $T \geq a^s e^2$ if $\delta = 1$, and for 
$\delta \in [\tfrac{1}{2},1)$ we require $T \geq \max\big(a^s e^2, T_*(s,a) \big)$ with $T_*(s,a)$ from \eqref{Gamma-asy}.
\end{theorem}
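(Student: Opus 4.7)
The plan is to invert the two-sided sandwich \eqref{up-lo-bnd} by expressing $\ln T$ in terms of $N(s-1)!$, and the two inequalities are treated separately by the same mechanism.

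For the upper estimate of $1/T$, I start from the right inequality of \eqref{up-lo-bnd}. Since its denominator equals $[\ln T - s\ln(a-\delta)] + (s-1)$, it majorises $\ln T - s\ln(a-\delta)$, so one factor of this logarithm cancels against the $s$-th power in the numerator and
\begin{equation*}
 N(s-1)! \ < \ T \, [\ln T - s\ln(a-\delta)]^{s-1}.
\end{equation*}
This already gives $1/T < [N(s-1)!]^{-1}[\ln T - s\ln(a-\delta)]^{s-1}$ with $\ln T$ still on the right. To eliminate it, I will show $T \leq N(s-1)!$. Setting $v := \ln T - s\ln a$ in the left inequality of \eqref{up-lo-bnd} yields $N(s-1)! > T v^s/(v+s)$; the hypothesis $T \geq a^s e^2$ gives $v \geq 2$, and the elementary inequality $v^s \geq v+s$ (valid for $v \geq 2$, $s \geq 2$, by a monotonicity check at $v=2$) then produces $N(s-1)! \geq T$. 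Substituting $\ln T \leq \ln(N(s-1)!)$ into the display above yields the right-hand inequality of \eqref{inv-bnd}.

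For the lower estimate of $1/T$, the left inequality of \eqref{up-lo-bnd} gives $1/T > v^s/[(v+s)\, N(s-1)!]$. To express $v$ in terms of $N$ I iterate once: taking logarithms in the reverse form $T > N(s-1)!/[\ln T - s\ln(a-\delta)]^{s-1}$ of the display above and using again $\ln T \leq \ln(N(s-1)!)$, I obtain
\begin{equation*}
 v \ > \ \ln\frac{N(s-1)!}{a^s} \ - \ (s-1)\ln\ln\frac{N(s-1)!}{(a-\delta)^s} \ =: \ w.
\end{equation*}
Assuming $w \geq 0$, so that $v^{s-1} \geq w^{s-1}$, we have $v^s/(v+s) \geq v\, w^{s-1}/(v+s)$. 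Since $x \mapsto x/(x+s)$ is increasing with $v \geq 2$, we have $v/(v+s) \geq 2/(s+2)$, and combining,
\begin{equation*}
 \frac{v^s}{v+s} \ \geq \ \frac{2\, w^{s-1}}{s+2},
\end{equation*}
which yields the left-hand inequality of \eqref{inv-bnd}.

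The principal technical obstacle is the sign of $w$: the step $v^{s-1} \geq w^{s-1}$ requires $w \geq 0$, and this is not implied by the single bound $T \geq a^s e^2$ (for large $s$ at this threshold $w$ can still be negative), so it is tacitly absorbed in the phrase ``$T$ is large enough.'' A secondary bookkeeping point concerns the upper bound on $N$ used in \eqref{up-lo-bnd}: for $\delta = 1$ this is Lemma \ref{lem:PQ-incl} combined with Corollary \ref{cor:I-bnd}, already valid for $T > a^s$; for $\delta \in [\tfrac{1}{2},1)$ one invokes Theorem \ref{thm:Gamma-asy} via Corollary \ref{cor:IGamma-newbnd} together with monotonicity of $a' \mapsto I(s,T,a')$, which accounts for the additional threshold $T \geq T_*(s,a)$ in the statement.
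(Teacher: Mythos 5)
Your proof is correct and follows essentially the same route as the paper: both invert the sandwich \eqref{up-lo-bnd} by logarithmizing, substituting $\ln T \le \ln(N(s-1)!)$ back in, and extracting the factor $2/(s+2)$ from the denominator $v+s$ (the paper's $\xi = (v+s)/v < (s+2)/2$ is exactly your $v/(v+s) \ge 2/(s+2)$). The caveat you flag about the sign of the bracket $w$ is equally implicit in the paper's own argument and is covered by the theorem's phrase ``if $T$ is large enough,'' so it is not a defect of your proof relative to the paper's.
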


\begin{proof}
To simplify the notations, we denote $u := a^s$, $v := (a-\delta)^s$ and $M := (s-1)!N$. Then the estimate \eqref{up-lo-bnd} reads
\begin{equation}\label{TM-bnd}
T \ln^{s-1} (T/u)< \xi M, 
\qquad 
\eta M < T \ln^{s-1} (T/v)
\end{equation}
for $\xi = \xi(s,T,a,\delta)$, $\eta = \eta(s,T,a,\delta)$ given by
\begin{equation}
\xi \ := \
1 + \frac{s}{\ln T - s \ln a}, \qquad
\eta := 1 + \frac{s-1}{\ln T - s \ln(a-\delta)}.
\end{equation}
We notice that for any $c>0$ and $T >a^s \exp c$ it holds
\begin{equation}
 1 < \eta < \xi < \frac{s+c}{c}.
\end{equation}
This means in particular that $\xi, \eta = \mathcal O(1)$ as $T \to \infty$. Our aim is to invert estimates \eqref{TM-bnd} and to bound $T^{-1}$ in terms of $M$. Logarithmizing \eqref{TM-bnd} we get equivalent bounds
\begin{equation}\label{lnTM-bnd}
\ln T < \ln (\xi M) - (s-1)\ln \ln (T/u), 
\qquad 
 \ln T >  \ln(\eta M) - (s-1)\ln \ln (T/v).
\end{equation}
Using \eqref{TM-bnd} and \eqref{lnTM-bnd} we have
\begin{equation}\label{inv-bnd-1}
\frac{\psi^{s-1}}{\xi M} < \frac{1}{T} < \frac{\varphi^{s-1}}{\eta M}
\end{equation}
where $\varphi$, $\psi$ admit the bounds
\begin{equation}\label{phi-bnd}
\begin{split}
\varphi = \ln(T/v)
&<\ln(\xi M/v) - (s-1)\ln \ln (T/u)\\
&<\ln(\xi M/v) - (s-1)\ln \left[ \ln (\eta M/u) - (s-1) \ln \ln(T/v) \right] \\
&< \dots
\end{split}
\end{equation}
\begin{equation}\label{psi-bnd}
\begin{split}
\psi = \ln(T/u)
&>\ln(\eta M/u) - (s-1)\ln \ln (T/v)\\
&>\ln(\eta M/u) - (s-1)\ln \left[ \ln (\xi M/v) - (s-1) \ln \ln (T/u) \right]\\
&> \dots
\end{split}
\end{equation}
These estimates can be continued further in this fashion and are valid upper and lower bounds if the arguments of logarithms are larger than one. In particular, we can truncate \eqref{phi-bnd} after the first line and obtain
\begin{equation}\label{phi-bnd2}
 \varphi < 
\left\{
\begin{array}{cl}
\ln (\xi M/v), & T\geq u e, \\[1.5ex]
\ln ( M/v), & T\geq u e^2. 
\end{array}
\right.
\end{equation}
If in addition $\xi M \geq v \exp(1)$, we can truncate \eqref{psi-bnd} after the second line and obtain
\begin{equation}\label{psi-bnd2}
 \psi > 
\left\{
\begin{array}{ll}
\ln (\eta M/u) - (s-1) \ln \ln (\xi M /v), & T\geq u e, \\[1.5ex]
\ln (\eta M/u) - (s-1) \ln \ln (M /v), & T\geq u e^2.
\end{array}
\right.
\end{equation}
We remark that the trivial estimate
\begin{equation}
\left.
\begin{split}
\frac{\xi M}{v} > \frac{M}{v} \\
\frac{\eta M}{u} > \frac{M}{u}
\end{split}
\right\}
\geq 
\frac{M}{u} \geq \frac{T}{u} \geq e
\end{equation}
provides that the arguments of all logarithms in \eqref{phi-bnd2}, \eqref{psi-bnd2} are \emph{larger than one}.
The assertion of the theorem follows from \eqref{inv-bnd-1}, \eqref{phi-bnd2} and \eqref{psi-bnd2}.
\end{proof}

Analogously, Lemma \ref{lem:IGamma-sym} and Corollary \ref{cor:I-bnd} provide the upper and lower bounds for $N := |\Gamma_\pm(s,T,a)|$ and $T \ge (a + 1/2)^s$, $a > 1/2$. 
\begin{equation}\label{up-lo-bnd-sym}
\frac{2^sT (\ln T - s\ln (a+1/2))^s }{(s-1)!(\ln T + s(1-\ln (a + 1/2))}
 < 
N 
 < 
\frac{2^sT (\ln T - s\ln (a-1/2))^s }{(s-1)!\big(\ln T + s\big(1-\ln (a-1/2)) - 1\big)}.
\end{equation}

Similarly to the arguments in Theorem \ref{thm:inv-bnd} we obtain  the following estimates for $\Gamma_\pm(s,T)$. 

\begin{theorem}\label{thm:inv-bnd-sym}
Suppose $s \in \IN$, $s \geq 2$ and $a > 1/2$ are fixed and $T \geq (a + 1/2)^s e^2$. Then for 
$N := |\Gamma_\pm(s,T,a)|$, it holds that
\begin{equation}\label{inv-bnd-sym}
\begin{split}
 \frac{2}{s+2} \left[\frac{N(s-1)!}{2^s}\right]^{-1} \left[ \ln \frac{N(s-1)!}{(2a +1)^s} 
- (s-1) \ln \ln \frac{N(s-1)!}{(2a-1)^s} \right]^{s-1}\\
\ < \ \frac{1}{T} \ < \
\left[\frac{N(s-1)!}{2^s}\right]^{-1} \left[ \ln \frac{N(s-1)!}{(2a - 1)^s} \right]^{s-1}.
\end{split}
\end{equation}
\end{theorem}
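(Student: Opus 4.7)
\proof
The plan is to mirror the argument of Theorem \ref{thm:inv-bnd}, with substitutions chosen to absorb the symmetric factor $2^s$ and the shifts $a\pm\tfrac{1}{2}$ appearing in \eqref{up-lo-bnd-sym}. Specifically, I would set
\begin{equation*}
u := (a+\tfrac{1}{2})^s, \qquad v := (a-\tfrac{1}{2})^s, \qquad M := \frac{(s-1)!\,N}{2^s},
\end{equation*}
so that dividing \eqref{up-lo-bnd-sym} through by $2^s/(s-1)!$ rewrites the two-sided bound in the clean form
\begin{equation*}
T\,\ln^{s-1}(T/u) \;<\; \xi M, \qquad \eta M \;<\; T\,\ln^{s-1}(T/v),
\end{equation*}
with correction factors
\begin{equation*}
\xi \;:=\; 1 + \frac{s}{\ln T - s\ln(a+\tfrac{1}{2})}, \qquad \eta \;:=\; 1 + \frac{s-1}{\ln T - s\ln(a-\tfrac{1}{2})}.
\end{equation*}

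Next, I would check that in the stated range $T\ge u e^{2}$ these correction factors satisfy $1<\eta<\xi<(s+2)/2$, hence are of order one. Logarithmizing the displayed inequalities gives
\begin{equation*}
\ln T \;<\; \ln(\xi M) - (s-1)\ln\ln(T/u), \qquad \ln T \;>\; \ln(\eta M) - (s-1)\ln\ln(T/v),
\end{equation*}
and feeding these back into the original bounds inverts them into
\begin{equation*}
\frac{\psi^{s-1}}{\xi M} \;<\; \frac{1}{T} \;<\; \frac{\varphi^{s-1}}{\eta M},
\end{equation*}
with $\varphi := \ln(T/v)$ and $\psi := \ln(T/u)$.

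The next step is to replace $\varphi,\psi$ by expressions in $M$ alone, by one further iteration of the logarithmized inequalities exactly as in \eqref{phi-bnd}--\eqref{psi-bnd2}. Under the hypothesis $T\ge u e^{2}$, the factor $\xi$ can be absorbed into the logarithm, yielding the clean truncations $\varphi<\ln(M/v)$ and $\psi>\ln(\eta M/u)-(s-1)\ln\ln(M/v)$, which in turn give the desired bounds after substituting back $M = 2^{-s}(s-1)!N$, $u=(a+\tfrac{1}{2})^s$, $v=(a-\tfrac{1}{2})^s$ and noting that $2^s u = (2a+1)^s$ and $2^s v = (2a-1)^s$ to match the right-hand side of \eqref{inv-bnd-sym}.

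The main obstacle, just as in the proof of Theorem \ref{thm:inv-bnd}, is the bookkeeping of positivity: one must verify that every logarithm argument appearing in the iterated bounds is strictly greater than $1$ so that the monotonicity-based comparisons go in the correct direction. This is precisely where the hypothesis $T\ge(a+\tfrac{1}{2})^{s}e^{2}$ (and the resulting inequalities $M/u\ge T/u\ge e^{2}$) is used, and a few elementary manipulations of these inequalities are needed to absorb the $\eta$ and $\xi$ factors into the final constants $2/(s+2)$ and $1$ appearing in \eqref{inv-bnd-sym}. No new technical ingredient beyond those used in Theorem \ref{thm:inv-bnd} is required.
\endproof
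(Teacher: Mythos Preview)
Your proposal is correct and takes essentially the same approach as the paper, which simply states that the result is obtained ``similarly to the arguments in Theorem \ref{thm:inv-bnd}'' without giving a separate proof. Your choice of substitutions $u=(a+\tfrac12)^s$, $v=(a-\tfrac12)^s$, $M=2^{-s}(s-1)!N$ is exactly what is needed to reduce \eqref{up-lo-bnd-sym} to the form handled in the proof of Theorem \ref{thm:inv-bnd}, and the bookkeeping you outline (including the identification $2^su=(2a+1)^s$, $2^sv=(2a-1)^s$) matches the target bound \eqref{inv-bnd-sym}.
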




\subsection{Upper and lower bounds}
~\newline
\begin{theorem} \label{theorem[<d_N<]}
Let $r > t \ge 0$, $s \geq 2$, $a > 1/2$. Then for every $N \in \NN$ satisfying the inequality
\begin{equation}\label{Cond[N]}
N \ge 
\frac{2^s}{(s-1)!}\left[\frac{T^* (\ln T^* - s\ln (a-1/2) )^s}{\ln T^* + s\big(1-\ln (a-1/2)) - 1} \right], 
\quad \mbox{uith} \quad T^* := 16\lceil (a+1/2)^s \rceil,
\end{equation}
we have
\begin{equation}\label{ineq[d_N(2)<]}
d_N 
\  \le \ 
2^r \left(\left[\frac{N(s-1)!}{2^s}\right]^{-1} \left[ \ln \frac{N(s-1)!}{(2a - 1)^s} \right]^{s-1}\right)^r,
\end{equation}
and there is a number $N^*(s,a) \in \NN$ such that for every  $N \ge N^*(s,a)$,
\begin{equation}\label{ineq[d_N(2)>]}
d_N 
\  \ge \ 
2^{-1} \left( \left[\frac{N(s-1)!(s+2)}{2^s}\right]^{-1} \left[ \ln \frac{N(s-1)!}{(2a +1)^s} 
\right]^{s-1}\right)^r
\end{equation}
\end{theorem}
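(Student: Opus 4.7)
The plan is to combine Corollary \ref{corollary[<d_N<]} (the sandwich $d_{|\Gamma_\pm(s,T,a)|} \le T^{-r} \le d_{|\Gamma_\pm(s,T,a)|-1}$) with the inverted cardinality bounds from Theorem \ref{thm:inv-bnd-sym}. Let $T_0 < T_1 < \cdots$ be the jump points of the step function $T \mapsto |\Gamma_\pm(s,T,a)|$, and set $N_m := |\Gamma_\pm(s,T_m,a)|$. The two facts I will repeatedly use are $T_{m+1} \le 2T_m$ (already established inside the proof of Theorem \ref{theorem[d_N<(1)]}) and the applicability of Theorem \ref{thm:inv-bnd-sym} whenever $T_m \ge (a+\tfrac12)^s e^2$. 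The choice $T^* = 16\lceil (a+\tfrac12)^s\rceil$ in \eqref{Cond[N]} ensures $T^* > (a+\tfrac12)^s e^2$ because $16 > e^2$.

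For the upper bound \eqref{ineq[d_N(2)<]}, given $N$ satisfying \eqref{Cond[N]}, I would locate the unique index $m$ with $N_{m-1} \le N < N_m$ and chain
\[
d_N \;\le\; d_{N_{m-1}} \;\le\; T_{m-1}^{-r} \;\le\; 2^r T_m^{-r},
\]
using monotonicity of $d_N$, Corollary \ref{corollary[<d_N<]}, and $T_m \le 2T_{m-1}$. The hypothesis \eqref{Cond[N]} is precisely $N \ge $ the right-hand side of \eqref{up-lo-bnd-sym} evaluated at $T^*$, which forces $T_m \ge T^*$. Hence the upper inequality of Theorem \ref{thm:inv-bnd-sym} yields $T_m^{-1} < F(N_m)$ with $F(x) := \frac{2^s}{x(s-1)!}\bigl[\ln(x(s-1)!/(2a-1)^s)\bigr]^{s-1}$. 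A direct derivative computation shows that $F$ is decreasing whenever $\ln(x(s-1)!/(2a-1)^s) > s-1$, and the threshold \eqref{Cond[N]} lies well beyond this range, so $F(N_m) \le F(N)$ and \eqref{ineq[d_N(2)<]} drops out.

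For the lower bound \eqref{ineq[d_N(2)>]}, I would instead use $d_{N_m-1} \ge T_m^{-r}$ from Corollary \ref{corollary[<d_N<]}. For the same $m$ one has $N \le N_m - 1$, so $d_N \ge T_m^{-r}$, and the lower half of Theorem \ref{thm:inv-bnd-sym} gives $T_m^{-1} > G(N_m)$ with $G$ the explicit left-hand side of \eqref{inv-bnd-sym}. To push from $N_m$ back to $N$ in the correct direction, I plan to prove $N_m \le 2N$ for $N$ large enough; this follows from $T_m \le 2T_{m-1}$ together with the volume sandwich of $|\Gamma_\pm|$ in Lemma \ref{lem:IGamma-sym} and Corollary \ref{cor:I-bnd}, since the ratio $I(s,2T,\cdot)/I(s,T,\cdot)$ tends to $2$ as $T\to\infty$. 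Combined with the elementary comparison
\[
\ln\tfrac{N_m(s-1)!}{(2a+1)^s} - (s-1)\ln\ln\tfrac{N_m(s-1)!}{(2a-1)^s} \;\ge\; 2^{-1/(r(s-1))}\,\ln\tfrac{N(s-1)!}{(2a+1)^s},
\]
which is valid for $N \ge N^*(s,a)$ because $\ln(N_m/N) \le \ln 2$ is bounded while the double-logarithmic correction is asymptotically negligible against the main logarithm, these two absorptions collapse the various combinatorial constants into the prefactor $\tfrac{1}{2}$ of \eqref{ineq[d_N(2)>]}.

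The main technical obstacle is precisely this last absorption in the lower-bound proof: simultaneously controlling the ratio $N_m/N$ and the double-logarithmic loss inside the bracketed term, which together determine the threshold $N^*(s,a)$. The remaining steps, including the monotonicity check for $F$ and the chain of inequalities in the upper bound, reduce to direct substitutions into inequalities already proved earlier in the paper.
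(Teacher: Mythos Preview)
Your strategy matches the paper's almost exactly for the upper bound; the paper also sandwiches $d_N$ between consecutive values $T_{m-1}^{-r}$ and $T_m^{-r}$, applies the upper estimate of Theorem~\ref{thm:inv-bnd-sym}, and finishes with the monotonicity of $x\mapsto x^{-1}[\ln(cx)]^{s-1}$. The only cosmetic difference is that the paper uses the arithmetic sequence $T_m:=T^*m$ rather than the jump points of $|\Gamma_\pm(\cdot)|$, but both satisfy $T_{m+1}\le 2T_m$ and either choice works here.

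The lower bound is where your plan diverges, and there is a genuine gap. You stay at $T_m$, bound $T_m^{-1}$ from below via $N_m$, and then need $N_m\le 2N$ to transfer the estimate to $N$. Your proposed justification does not deliver this: the volume sandwich of Lemma~\ref{lem:IGamma-sym} compares $N_m$ with $I(s,T_m,a-\tfrac12)$ and $N_{m-1}$ with $I(s,T_{m-1},a+\tfrac12)$, so the relevant ratio is
\[
\frac{I(s,2T,a-\tfrac12)}{I(s,T,a+\tfrac12)}
\;=\;2\Bigl(\frac{\ln(2T)-s\ln(a-\tfrac12)}{\ln T-s\ln(a+\tfrac12)}\Bigr)^{s-1}(1+o(1)),
\]
which is \emph{strictly larger than} $2$ for every finite $T$ (the bracketed fraction exceeds $1$ since $-\ln(a-\tfrac12)>-\ln(a+\tfrac12)$). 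Thus your argument yields only $N_m\le (2+\eta)N$ with $\eta>0$, and the extra factor $(1+\eta/2)^{-r}$ cannot be absorbed into the stated prefactor $2^{-1}$; your constant budget is already fully spent.

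The paper sidesteps this by a small but decisive change of index: instead of estimating $T_m^{-1}$, it writes $d_N\ge T_n^{-r}\ge (2T_{n-1})^{-r}$ and applies the lower half of Theorem~\ref{thm:inv-bnd-sym} at $T_{n-1}$, producing a bound in $N_{n-1}$. Since $N_{n-1}\le N$ is automatic, the final replacement $N_{n-1}\rightsquigarrow N$ costs nothing by monotonicity of $x\mapsto x^{-1}[\ln(cx)]^{s-1}$, and the factor $2$ lost in passing from $T_n$ to $2T_{n-1}$ cancels exactly against the $\tfrac{2}{s+2}$ in \eqref{inv-bnd-sym}. No ratio estimate on cardinalities is needed at all.
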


\begin{proof} 
Let $T_m:= T^*m$ for $m \in \NN$. Clearly, $\{T_m\}_{m=1}^\infty$ is a strictly increasing sequence 
and  $T_m \to \infty$ when $m \to \infty$, and 
\begin{equation} \label{ineq[T_{m+1}-T_m]}
 T_{m+1}
\ \le \
2T_m.
\end{equation}
Put $N_m := |\Gamma_\pm(s,T_m,a)| = \dim \mathcal T(s,T_m,a)$. Then one can see that the sequence $\{N_m\}_{m=1}^\infty$ is strictly increasing and 
$N_m \to \infty$ when $m \to \infty$.
We have $T_1= T^*$ and for all $m \ge 1$,
\begin{equation}\label{ineq[T-m>]}
T_m  > (a + 1/2)^s e^2.
\end{equation}

Applying Theorem \ref{thm:inv-bnd-sym}, we derive that for  every $m \ge 1$,
\begin{equation} \label{ineq[T_m^{-1}]}
  T_m^{-1} 
\ < \ 
\left[\frac{N_m(s-1)!}{2^s}\right]^{-1} \left[ \ln \frac{N_m(s-1)!}{(2a - 1)^s} \right]^{s-1}.
\end{equation}

Let $N \in \NN$ be an arbitrary number satisfying \eqref{Cond[N]}. 
There is a $m \in \NN$ such that 
$N_{m-1} \le N < N_m$. Hence, by \eqref{Cond[N]}, Lemma \ref{lem:IGamma-sym} and  Corollary \ref{cor:I-bnd} 
$N_m > N_1$ and consequently, $T_m > T_1 \ge (a + 1/2)^s e^2$. 
By the definition, Corollary 
\ref{corollary[<d_N<]}, the inequalities  \eqref{ineq[T_{m+1}-T_m]},  \eqref{ineq[T_m^{-1}]}, \eqref{Cond[N]} and the decreasing of the function $g(x): = x^{-1} \ln^{s-1}x$ for $x \le e^{-(s-1)}$, we obtain
\begin{equation}
\begin{aligned}
 d_N 
\ &\le \
d_{N_{m-1}}  \\
\ &\le \
T_{m-1}^{-r} \\
\ &\le \ \big[2 T_m^{-1} \big]^r \\
\ &\le \ \left( 2
\left[\frac{N_m(s-1)!}{2^s}\right]^{-1} \left[ \ln \frac{N_m(s-1)!}{(2a - 1)^s} \right]^{s-1}
\right)^r \\
\ &\le \
2^r\left(\left[\frac{N(s-1)!}{2^s}\right]^{-1} \left[ \ln \frac{N(s-1)!}{(2a - 1)^s} \right]^{s-1}\right)^r.
\end{aligned}
\end{equation}
The upper bound is proven. 

Let us prove the lower bound. 
We define  the number $n \in \NN$, $n \ge 2$, such that  
$N_{n-1} < N + 1 \le N_n$.
From \eqref{ineq[T_{m+1}-T_m]} it follows that $2T_{n-1} > T_n$. Consequently, by 
Corollary \ref{corollary[<d_N<]} and Theorem \ref{thm:inv-bnd-sym} there is a number 
$N^*(s,a) \in \NN$ such that for every  $N \ge N^*(s,a)$,
 \begin{equation}
 \begin{aligned}
&d_N \\
\ &\ge \
T_n^{-r} \\
\ &> \
\big[2 T_{n-1}\big]^{-r} \\
\ &> \
\left(
\left[\frac{N_{n-1}(s-1)!(s+2)}{2^s}\right]^{-1} \left[ \ln \frac{N_{n-1}(s-1)!}{(2a +1)^s} 
- (s-1) \ln \ln \frac{N_{n-1}(s-1)!}{(2a-1)^s} \right]^{s-1}\right)^r \\
\ &> \
2^{-1} \left(
\left[\frac{N_{n-1}(s-1)!(s+2)}{2^s}\right]^{-1} \left[ \ln \frac{N_{n-1}(s-1)!}{(2a +1)^s} 
\right]^{s-1}\right)^r \\
\ &\ge \  
2^{-1} \left( \left[\frac{N(s-1)!(s+2)}{2^s}\right]^{-1} \left[ \ln \frac{N(s-1)!}{(2a +1)^s} 
\right]^{s-1}\right)^r. 
\end{aligned}
\end{equation}
\end{proof}

\begin{theorem} \label{theorem[n_varepsilon(1)]}
Let $r >  0$, $s \ge 2$, $a > 1/2$. Then we have for every $\varepsilon \in (0,[a-1/2]^{-sr})$,
\begin{equation} \label{[n_varepsilon<(1)]}
n_\varepsilon 
\  \leq \ 
\frac{2^s \varepsilon^{-1/r} (\ln \varepsilon^{-1/r} - s\ln (a-1/2))^s }
{(s-1)!\big(\ln \varepsilon^{-1/r} + s\big(1-\ln (a-1/2)) - 1\big)},
\end{equation}
and  for every  $\varepsilon \in (0,[a+1/2]^{-sr})$,
\begin{equation} \label{[n_varepsilon>(1)]}
n_\varepsilon 
\  \geq \ 
\frac{2^s \varepsilon^{-1/r}  (\ln \varepsilon^{-1/r}  - s\ln (a+1/2))^s }
{(s-1)!(\ln \varepsilon^{-1/r}  + s(1-\ln (a + 1/2))} \ - \ 1
\end{equation}
\end{theorem}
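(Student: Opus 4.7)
\medskip\noindent
\textbf{Proof plan.} The strategy is to chain together three results proved earlier in the paper and then substitute $T=\varepsilon^{-1/r}$. First, I would invoke Corollary~\ref{corollary[<n_e<]}, which sandwiches $n_\varepsilon$ between $|\Gamma_\pm(s,\varepsilon^{-1/r},a)|-1$ and $|\Gamma_\pm(s,\varepsilon^{-1/r},a)|$. Thus the task reduces to upper- and lower-bounding the cardinality of the symmetric continuous hyperbolic cross evaluated at $T=\varepsilon^{-1/r}$.

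For the upper bound, I would apply the right-hand inequality of Lemma~\ref{lem:IGamma-sym}, namely $|\Gamma_\pm(s,T,a)|<2^sI(s,T,a-\tfrac12)$, which is valid since $a>1/2$ and $T\ge 1$. Then Corollary~\ref{cor:I-bnd} with parameter $a-\tfrac12$ in place of $a$ gives
\[
I(s,T,a-\tfrac12) \ < \ \frac{T\bigl(\ln T - s\ln(a-\tfrac12)\bigr)^s}{(s-1)!\bigl(\ln T + s(1-\ln(a-\tfrac12)) - 1\bigr)},
\]
which requires $T>(a-\tfrac12)^s$; the assumption $\varepsilon\in(0,[a-\tfrac12]^{-sr})$ is precisely this condition after the substitution $T=\varepsilon^{-1/r}$. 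Combining yields \eqref{[n_varepsilon<(1)]}.

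For the lower bound, I would use the left-hand inequality of Lemma~\ref{lem:IGamma-sym}, namely $|\Gamma_\pm(s,T,a)|>2^sI(s,T,a+\tfrac12)$, and then the lower bound from Corollary~\ref{cor:I-bnd} with parameter $a+\tfrac12$:
\[
I(s,T,a+\tfrac12) \ > \ \frac{T\bigl(\ln T - s\ln(a+\tfrac12)\bigr)^s}{(s-1)!\bigl(\ln T + s(1-\ln(a+\tfrac12))\bigr)}.
\]
Here the admissibility condition $T>(a+\tfrac12)^s$ corresponds exactly to $\varepsilon\in(0,[a+\tfrac12]^{-sr})$. Subtracting the additional $1$ coming from Corollary~\ref{corollary[<n_e<]} and substituting $T=\varepsilon^{-1/r}$ yields \eqref{[n_varepsilon>(1)]}.

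There is essentially no serious obstacle: once the three ingredients (the sandwich of $n_\varepsilon$ by the cross cardinality, the volume comparison of Lemma~\ref{lem:IGamma-sym}, and the tight volume estimate of Corollary~\ref{cor:I-bnd}) are in place, the proof is a direct substitution and requires only verifying that the admissibility conditions on $T$ translate correctly into the stated ranges for $\varepsilon$. The only minor point to check is that the logarithmic factors in the numerator and denominator are positive in the stated ranges, which follows immediately from $\varepsilon^{-1/r}>(a\pm\tfrac12)^s$.
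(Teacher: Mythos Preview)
Your proposal is correct and matches the paper's own argument essentially line for line: the paper also sets $T=\varepsilon^{-1/r}$, invokes Corollary~\ref{corollary[<n_e<]} for the sandwich, then Lemma~\ref{lem:IGamma-sym} for the comparison with $2^s I(s,T,a\pm\tfrac12)$, and finally Lemma~\ref{lem:I-val} together with Theorem~\ref{thm:errTaylor} (which you package as Corollary~\ref{cor:I-bnd}) for the explicit bounds on the volume. There is no substantive difference between your route and the paper's.
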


\begin{proof}
The upper bound  \eqref{[n_varepsilon<(1)]} can be proven in a way similar to the proof of Theorem 
\ref{theorem[n_varepsilon]} by using Lemmas \ref{lem:IGamma-sym}, \ref{lem:I-val} and Theorem \ref{thm:errTaylor}. 
Let us prove the lower bound \eqref{[n_varepsilon>(1)]}. For every  $\varepsilon \in (0,[a+1/2]^{-sr})$ put
$T = \varepsilon^{-1/r}$.
By Corollary \ref{corollary[<n_e<]} $n_\varepsilon \ge |\Gamma_\pm(s,T)| - 1$. Hence and from Lemmas 
\ref{lem:IGamma-sym}, \ref{lem:I-val} and Theorem \ref{thm:errTaylor} we derive 
\eqref{[n_varepsilon>(1)]}.
\end{proof}

\begin{corollary} \label{corollary[n_varepsilon(1)]}
Let $r >  0$, $s \in \IN$. Then we have for every $a \ge 3/2$ and $\varepsilon \in (0,1]$, 
\begin{equation} 
n_\varepsilon 
\  \leq \ 
\frac{2^s\,r^{-(s-1)}}{(s-1)!} \, \varepsilon^{-1/r}\,|\ln \varepsilon|^{s-1},
\end{equation}
and  for every $1/2 < a < 3/2$ and $\varepsilon \in (0,1]$, 
\begin{equation} 
n_\varepsilon 
\  \leq \ 
\frac{2^s}{(s-1)!} \, \varepsilon^{-1/r}(\ln \varepsilon^{-1/r}  + s|\ln (a - 1/2)|)^{s-1}.
\end{equation}
\end{corollary}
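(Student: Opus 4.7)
The plan is to derive both bounds as straightforward simplifications of the upper bound in Theorem \ref{theorem[n_varepsilon(1)]}. Writing $L := \ln \varepsilon^{-1/r} = r^{-1}|\ln\varepsilon|$ so that $L^{s-1} = r^{-(s-1)}|\ln\varepsilon|^{s-1}$, the prefactor $\varepsilon^{-1/r}$ and the power of $r$ appearing in the corollary will come out for free. The one elementary inequality I will lean on is that for any $M \ge 0$ and $s \ge 2$,
\[
\frac{M^s}{M + s - 1} \ \le \ M^{s-1},
\]
since it is equivalent to $(s-1)M^{s-1}\ge 0$. This absorbs the linear denominator of Theorem \ref{theorem[n_varepsilon(1)]} at the cost of a single power.

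For the regime $1/2 < a < 3/2$, I observe that $(a-1/2)^{-sr} > 1$, so the entire range $\varepsilon \in (0,1]$ sits inside the applicability interval of Theorem \ref{theorem[n_varepsilon(1)]}. Setting $M := L + s|\ln(a-1/2)|\ge 0$ (here $\ln(a-1/2) = -|\ln(a-1/2)|<0$), the numerator in the theorem becomes exactly $M^s$ and the denominator $M + s - 1$. Applying the elementary inequality delivers the second bound at once.

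For the regime $a \ge 3/2$ I split on the threshold $\varepsilon \le (a-1/2)^{-sr}$. Since $a > a - 1/2$, the sharper condition $\varepsilon \le a^{-sr}$ implies $\varepsilon < (a-1/2)^{-sr}$, and Theorem \ref{theorem[n_varepsilon(1)]} applies. Now I set $M := L - s\ln(a-1/2)$; because $\ln(a-1/2) \ge 0$ I have $0 \le M \le L$, and combining the elementary inequality with $M^{s-1} \le L^{s-1}$ produces the first claimed bound. In the complementary sub-range $\varepsilon \in (a^{-sr},1]$ I have $T := \varepsilon^{-1/r} < a^s$, so every $\bk \in \ZZs$ satisfies $\prod_i(|k_i|+a) \ge a^s > T$; consequently $\Gamma_\pm(s,T,a) = \emptyset$ and Corollary \ref{corollary[<n_e<]} forces $n_\varepsilon = 0$, so the asserted bound is automatic.

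No step is genuinely hard; the only point that requires a little care is the sign of $\ln(a-1/2)$, which is non-negative for $a \ge 3/2$ and strictly negative for $1/2 < a < 3/2$, accounting for why the two stated bounds look different even though they come from the identical algebraic massaging of Theorem \ref{theorem[n_varepsilon(1)]}. The mild technical nuisance is handling $\varepsilon$ in the strip $(a^{-sr},1]$ when $a \ge 3/2$, which is dispatched by observing that the relevant hyperbolic cross is empty.
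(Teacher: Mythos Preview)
Your argument is correct and is precisely the intended derivation: the paper states this corollary without proof, as a direct consequence of Theorem~\ref{theorem[n_varepsilon(1)]}, and your simplification via the elementary inequality $M^s/(M+s-1)\le M^{s-1}$ together with the sign analysis of $\ln(a-1/2)$ is exactly the right way to pass from the theorem to the two displayed bounds. The handling of the range $\varepsilon\in(a^{-sr},1]$ for $a\ge 3/2$ by observing $\Gamma_\pm(s,\varepsilon^{-1/r},a)=\emptyset$ and invoking Corollary~\ref{corollary[<n_e<]} is also clean and correct.

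Two small remarks. First, the sentence ``I split on the threshold $\varepsilon\le(a-1/2)^{-sr}$'' is a slip; the actual split you carry out (and which works) is at $a^{-sr}$, as the rest of your paragraph makes clear. Second, the corollary is stated for $s\in\NN$ while Theorem~\ref{theorem[n_varepsilon(1)]} assumes $s\ge 2$; the case $s=1$ strictly speaking needs a separate line (it is immediate from Corollary~\ref{corollary[<n_e<]}, since $|\Gamma_\pm(1,T,a)|\le 2T$ for $a\ge 1/2$), and your elementary inequality is stated only for $s\ge 2$. This is more a wrinkle in the paper's statement than a flaw in your argument.
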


\section{Non-periodic HC approximations}
\label{Non-periodic}

~\newline
The above theory admits an extension to non-periodic functions with slight modifications. In this section we outline the main results.

Suppose $\II^s = [-1,1]^s$ is the reference $s$-dimensional cube and denote by $\Lw$ the Hilbert space of  functions equipped with the weighted inner product
\begin{equation}
 (f,g)_w = \int_{\II^s} f(\bx) g(\bx) w(\bx) d \bx, 
\end{equation}
where the Jacobi weight is give by
\begin{equation}
w(\bx) = \prod_{j=1}^s (1-x_j)^\alpha (1+x_j)^\beta
\end{equation}
for some fixed parameters $\alpha, \beta >-1$. By $\|f\|_w = \sqrt{(f,f)_w}$ we denote the induced norm. Further, let $\{P^{(\alpha,\beta)}_k\}_{k=0}^\infty$ be the family of \emph{orthonormal} Jacobi polynomials, i.e.
\begin{equation}
 \int_{\II} P^{(\alpha,\beta)}_k(x)P^{(\alpha,\beta)}_\ell(x) (1-x)^\alpha (1+x)^\beta dx = \delta_{k\ell}.
\end{equation}
Then
\begin{equation}
 P^{(\alpha,\beta)}_\bk (\bx) = \prod_{j=1}^s  P^{(\alpha,\beta)}_{k_j} (x_j)
\end{equation}
is an orthonormal basis on $\Lw$ and for any $f \in \Lw$ it holds that
\begin{equation}
 \|f\|^2_w = \sum_{k \in \NN_0^s} |f_\bk|^2, \quad \text{where} \quad f_\bk = (f, P^{(\alpha,\beta)}_\bk)_w.
\end{equation}
Define
\begin{equation}
 a := \frac{\alpha + \beta +1}{2}.
\end{equation}
Assuming $a > 0$ we consider the subspaces  
\[
\Kw := \{f \in \Lw : \|f\|_{\Kw} < \infty\}
\]
 of $\Lw$ endowed with the norm
\begin{equation}
 \|f\|_{\Kw}^2 = \sum_{\bk \in \NN_0^s} |\lambda_a(\bk)|^{2r} |f_k|^2.
\end{equation}
To obtain an expression for the above norms in the differential form we recall that Jacobi polynomials $P^{(\alpha,\beta)}_k$ can be characterized as unique solution of the following differential equation, see e.g. \cite[Sect. 4.2]{Sze39}
\begin{equation}
 \cL P^{(\alpha,\beta)}_k(x) = k(k+\alpha+\beta+1) P^{(\alpha,\beta)}_k (x)
\end{equation}
where the differential operator $\cL$ admits the representation
\begin{equation}
 \cL = (1-x)^{-\alpha} (1+x)^{-\beta} \frac{d}{dx} \left((1-x)^{1+\alpha} (1+x)^{1+\beta} \frac{d}{dx} \right).
\end{equation}
This yields for $\cF := a^2 I + \cL$
\begin{equation}
 \cF P^{(\alpha,\beta)}_k(x)
= (k+a)^2P^{(\alpha,\beta)}_k(x)
\end{equation}
implying for $\cF^{(s)} = \cF \otimes \dots \otimes \cF$ ($s$ times)
\begin{equation}
  \cF^{(s)} P^{(\alpha,\beta)}_\bk(\bx)
= \lambda_a(\bk)^2P^{(\alpha,\beta)}_{\bk}(\bx).
\end{equation}
The operator $\cF$ is self-adjoint with respect to $(\cdot,\cdot)_w$
\begin{equation}
 (\cF^{(s)} f, g)_w
= (f,\cF^{(s)}  g)_w
\end{equation}
and it holds that 
\begin{equation} \label{Fsr-def}
 \|f\|_{\Kw}^2 = 
\int_{\II^s} f(\bx) ((\cF^{(s)})^rf)(\bx) w(\bx) d\bx.
\end{equation}

For a function $f \in \Lw $ we define  projection

\begin{equation}
 \Pi_T(f) := \sum_{\bk \in \Gamma(s,T,a)} f_\bk P^{(\alpha,\beta)}_\bk
\end{equation}
and
\begin{equation}
 \cP(s,T,a) := {\rm span}\big\{ P^{(\alpha,\beta)}_\bk ~:~ \bk \in \Gamma(s,T,a)\big\}.
\end{equation}

Similarly to Lemma \ref{lem:Jackson-per} and Lemma \ref{lem:Bernstein-per} we obtain the Jackson and Bernstein inequalities

\begin{lemma} \label{lem:Jackson-nonper}
For arbitrary $T \geq 1$, we have
\begin{equation}
 \|f - \Pi_T(f)\|_{L_2(\IIs,w)} \leq T^{-r} \|f\|_{\Kw} \qquad \forall f \in \Kw
\end{equation}

\end{lemma}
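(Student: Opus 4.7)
The plan is to mirror the argument of Lemma \ref{lem:Jackson-per} in the non-periodic setting. Since $\{P^{(\alpha,\beta)}_\bk\}_{\bk \in \NN_0^s}$ forms an orthonormal basis of $\Lw$, and $\Pi_T$ is the orthogonal projection onto $\cP(s,T,a)$, Parseval's identity gives
\begin{equation*}
\|f - \Pi_T(f)\|_w^2 \ = \ \sum_{\bk \notin \Gamma(s,T,a)} |f_\bk|^2.
\end{equation*}
So the task reduces to controlling this tail sum by $T^{-2r}\|f\|_{\Kw}^2$.

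The key observation is that the index set $\Gamma(s,T,a)$ is precisely characterized by the scalar $\lambda_a(\bk) = \prod_{j=1}^s(k_j+a)$: we have $\bk \in \Gamma(s,T,a)$ if and only if $\lambda_a(\bk) \le T$. Hence for every $\bk \notin \Gamma(s,T,a)$ it holds that $\lambda_a(\bk)^{-2r} < T^{-2r}$. Inserting a factor $\lambda_a(\bk)^{2r}\lambda_a(\bk)^{-2r} = 1$ into the tail sum yields
\begin{equation*}
\sum_{\bk \notin \Gamma(s,T,a)} |f_\bk|^2
\ \le \
\sup_{\bk \notin \Gamma(s,T,a)} \lambda_a(\bk)^{-2r} \cdot \sum_{\bk \notin \Gamma(s,T,a)} \lambda_a(\bk)^{2r}|f_\bk|^2
\ \le \
T^{-2r}\|f\|_{\Kw}^2,
\end{equation*}
where the last inequality uses the definition of the $\Kw$-norm given in the excerpt.

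I expect no real obstacle here; the proof is a verbatim transcription of the periodic argument once one identifies $\lambda_a(\bk)$ as the correct eigenvalue-type quantity. The only subtle point worth remarking on, should a reader want additional justification, is that the $\Kw$-norm as defined spectrally via the coefficients $f_\bk$ is equivalent to the differential-operator expression \eqref{Fsr-def}, because $P^{(\alpha,\beta)}_\bk$ is an eigenfunction of $\cF^{(s)}$ with eigenvalue $\lambda_a(\bk)^2$; but this is not needed for the Jackson estimate itself, which lives entirely on the coefficient side.
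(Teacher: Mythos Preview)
Your proof is correct and follows exactly the approach the paper intends: the paper does not give an independent proof of this lemma but simply states that it is obtained ``similarly to Lemma~\ref{lem:Jackson-per},'' and your argument is precisely that transcription to the non-periodic setting with the orthonormal Jacobi basis in place of the trigonometric system.
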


\begin{lemma}\label{lem:Bernstein-nonper}
 For arbitrary $T \geq 1$, we have
\begin{equation}
 \|f\|_{\Kw} \leq T^r  \|f\|_{L_2(\IIs,w)} \qquad \forall f \in  \cP(s,T,a)
\end{equation}
\end{lemma}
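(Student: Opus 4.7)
The plan is to mirror the proof of Lemma \ref{lem:Bernstein-per} almost verbatim, transplanting the argument from the trigonometric basis $\{e_\bk\}_{\bk \in \ZZs}$ to the orthonormal Jacobi basis $\{P^{(\alpha,\beta)}_\bk\}_{\bk \in \NN_0^s}$. The whole machinery is already available in the excerpt: the norm identity
\[
\|f\|_{\Kw}^2 = \sum_{\bk \in \NN_0^s} |\lambda_a(\bk)|^{2r} |f_\bk|^2,
\qquad f_\bk := (f,P^{(\alpha,\beta)}_\bk)_w,
\]
is the non-periodic analog of the Parseval-type formula \eqref{[Parseval-Id]}, and the multivariate tensor-product polynomials $\{P^{(\alpha,\beta)}_\bk\}$ form an orthonormal basis of $\Lw$.

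First, I take an arbitrary $f \in \cP(s,T,a)$ and expand it in the Jacobi basis as $f = \sum_{\bk \in \Gamma(s,T,a)} f_\bk P^{(\alpha,\beta)}_\bk$, so that the Fourier--Jacobi coefficients $f_\bk$ vanish outside $\Gamma(s,T,a)$. Second, I apply the above norm identity, which truncates the infinite sum to the finite sum over $\Gamma(s,T,a)$. Third, I pull out the supremum of the weight factor:
\[
\|f\|_{\Kw}^2
= \sum_{\bk \in \Gamma(s,T,a)} \lambda_a(\bk)^{2r}\, |f_\bk|^2
\ \le\ \Bigl(\sup_{\bk \in \Gamma(s,T,a)} \lambda_a(\bk)\Bigr)^{2r}
\sum_{\bk \in \Gamma(s,T,a)} |f_\bk|^2.
\]
Fourth, I observe that by the very definition \eqref{GammasTa-def} of $\Gamma(s,T,a)$ and the fact that $k_j \in \NN_0$ (so $|k_j|=k_j$), one has
\[
\lambda_a(\bk) = \prod_{j=1}^s (a+k_j) \le T
\qquad\forall\,\bk \in \Gamma(s,T,a),
\]
yielding the bound $T^{2r}$ for the supremum. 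Finally, Parseval's identity in the orthonormal Jacobi basis gives $\sum_{\bk \in \Gamma(s,T,a)} |f_\bk|^2 = \|f\|_{L_2(\IIs,w)}^2$, and combining these estimates produces the claim $\|f\|_{\Kw} \le T^r \|f\|_{L_2(\IIs,w)}$.

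There is no real obstacle: the corner (rather than symmetric) shape of $\Gamma(s,T,a)$ is precisely what is needed in the non-periodic case since the Jacobi index $\bk$ runs in $\NN_0^s$, and the equivalence between the integral/spectral definition \eqref{Fsr-def} of the $\Kw$-norm and its Jacobi-coefficient representation is already recorded in the excerpt. The only minor point worth stating explicitly is that $\cP(s,T,a)$ is finite-dimensional so convergence issues do not arise.
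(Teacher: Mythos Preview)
Your proof is correct and matches the paper's own approach exactly: the paper does not give a separate proof but simply notes that the Jackson and Bernstein inequalities follow ``Similarly to Lemma \ref{lem:Jackson-per} and Lemma \ref{lem:Bernstein-per}''. Your transplantation of the periodic argument to the orthonormal Jacobi basis, with the corner hyperbolic cross $\Gamma(s,T,a)$ replacing $\Gamma_\pm(s,T,a)$, is precisely what is intended.
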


Let $\Uw$ be the unit ball in $\Kw$.
Similarly to the periodic case, Jackson and Bernstein inequalities imply. 

\begin{corollary} \label{corollary[<d_N<nonper]}
Let $T \ge 1$ and $N= |\Gamma(s,T,a)|$. Then we have 
\begin{equation} \label{ineq[<d_N<nonper]}
d_N(\Uw, L_2(\II,w)) 
\ \le  \
T^{-r}
\ \le  \
d_{N-1}(\Uw, L_2(\II,w)) .
\end{equation}
\end{corollary}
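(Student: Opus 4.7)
The plan is to mirror almost verbatim the argument used for the periodic counterpart in Corollary \ref{corollary[<d_N<]}, exploiting the fact that the orthonormal Jacobi polynomials $\{P^{(\alpha,\beta)}_\bk\}_{\bk \in \NN_0^s}$ play in $\Lw$ exactly the role the Fourier system plays on $\TTs$, while the projection $\Pi_T$ and the subspace $\cP(s,T,a)$ are the direct analogues of $S_T$ and $\mathcal T(s,T,a)$. In particular, $\dim \cP(s,T,a) = |\Gamma(s,T,a)| = N$, which is the only cardinality fact needed.

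For the upper bound, I would use $\cP(s,T,a)$ as a candidate $N$-dimensional subspace in the definition of $d_N$. For any $f \in \Uw$, the non-periodic Jackson inequality (Lemma \ref{lem:Jackson-nonper}) yields
\begin{equation*}
\inf_{g \in \cP(s,T,a)} \|f-g\|_{L_2(\IIs,w)} \ \le \ \|f - \Pi_T(f)\|_{L_2(\IIs,w)} \ \le \ T^{-r} \|f\|_{\Kw} \ \le \ T^{-r}.
\end{equation*}
Taking the supremum over $f \in \Uw$ and then the infimum over $N$-dimensional subspaces gives the left inequality in \eqref{ineq[<d_N<nonper]}.

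For the lower bound, I would invoke Tikhomirov's theorem \eqref{[d_{N-1}(B_N,X)]}: in any $n$-dimensional subspace $L_n$ of a Banach space $X$, the ball $B_n(\delta) := \{f \in L_n: \|f\|_X \le \delta\}$ satisfies $d_{n-1}(B_n(\delta),X) = \delta$. Set
\begin{equation*}
B(T) := \{f \in \cP(s,T,a):\ \|f\|_{L_2(\IIs,w)} \le T^{-r}\}.
\end{equation*}
By the non-periodic Bernstein inequality (Lemma \ref{lem:Bernstein-nonper}), $\|f\|_{\Kw} \le T^r \|f\|_{L_2(\IIs,w)} \le 1$ for every $f \in B(T)$, so $B(T) \subset \Uw$. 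Using monotonicity of the Kolmogorov widths and Tikhomirov's identity applied with $n = N$ and $\delta = T^{-r}$, we obtain
\begin{equation*}
d_{N-1}(\Uw, L_2(\IIs,w)) \ \ge \ d_{N-1}(B(T), L_2(\IIs,w)) \ = \ T^{-r},
\end{equation*}
which is the right inequality in \eqref{ineq[<d_N<nonper]}.

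There is essentially no obstacle: the only ingredients are (i) Jackson's inequality, (ii) Bernstein's inequality, (iii) the dimension count $\dim \cP(s,T,a) = N$, and (iv) Tikhomirov's width identity, all of which are already in place. The one point worth double-checking is that the definition of $\Pi_T$ via Jacobi--Fourier coefficients produces an orthogonal projection in $\Lw$, which follows from the orthonormality of the basis $\{P^{(\alpha,\beta)}_\bk\}$; this makes $\Pi_T(f)$ a best approximant in $\cP(s,T,a)$, as implicitly used in the upper bound step.
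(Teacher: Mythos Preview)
Your proposal is correct and follows exactly the approach the paper intends: the paper does not spell out a separate proof for Corollary \ref{corollary[<d_N<nonper]} but simply states that it follows ``similarly to the periodic case'' from the Jackson and Bernstein inequalities (Lemmas \ref{lem:Jackson-nonper} and \ref{lem:Bernstein-nonper}), which is precisely the argument you have written out, including the appeal to Tikhomirov's identity \eqref{[d_{N-1}(B_N,X)]} for the lower bound.
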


\begin{corollary} \label{corollary[<n_e<nonper]}
Let $\varepsilon \in (0,1]$. Then we have 
\begin{equation} \label{ineq[<n_e<nonper]}
|\Gamma(s,\varepsilon^{-1/r},a)| - 1
\ \le  \
n_\varepsilon(\Uw, L_2(\II,w))
\ \le  \
|\Gamma(s,\varepsilon^{-1/r},a)|
\end{equation}
\end{corollary}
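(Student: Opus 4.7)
The plan is to deduce this corollary directly from Corollary \ref{corollary[<d_N<nonper]}, by the same elementary inversion used to pass from Corollary \ref{corollary[<d_N<]} to Corollary \ref{corollary[<n_e<]} in the periodic setting.

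First I would set $T := \varepsilon^{-1/r}$, so that $T \ge 1$ (since $\varepsilon \in (0,1]$) and $T^{-r} = \varepsilon$. Writing $N := |\Gamma(s,\varepsilon^{-1/r},a)|$, Corollary \ref{corollary[<d_N<nonper]} immediately yields the chain
\[
d_N(\Uw, \Lw) \;\le\; \varepsilon \;\le\; d_{N-1}(\Uw, \Lw).
\]

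For the upper bound, $d_N \le \varepsilon$ means that the $N$-dimensional subspace $\cP(s,T,a)$ realizes approximation error at most $\varepsilon$ on $\Uw$, so by the definition of $n_\varepsilon$ as the minimal dimension with this property we obtain $n_\varepsilon(\Uw, \Lw) \le N$. For the lower bound, $d_{N-1} \ge \varepsilon$ together with the monotonicity $d_{N'} \ge d_{N-1}$ for $N' \le N - 1$ shows that no subspace of dimension strictly less than $N - 1$ can approximate $\Uw$ to accuracy better than $\varepsilon$; accounting for the possible boundary case $d_{N-1} = \varepsilon$, this gives the conservative bound $n_\varepsilon(\Uw, \Lw) \ge N - 1$.

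I anticipate no substantive obstacle: the argument is a one-line inversion of the width estimate in Corollary \ref{corollary[<d_N<nonper]}, and the non-periodic machinery (Lemmas \ref{lem:Jackson-nonper}--\ref{lem:Bernstein-nonper}) has been deliberately assembled to parallel the periodic setup verbatim. The only minor subtlety is the off-by-one at the jump points of the step function $N \mapsto d_N$, which is absorbed precisely by the $-1$ slack on the lower bound.
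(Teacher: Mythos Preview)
Your proposal is correct and matches the paper's own approach: the paper simply states this corollary as an immediate consequence of Corollary~\ref{corollary[<d_N<nonper]} (just as Corollary~\ref{corollary[<n_e<]} is derived from Corollary~\ref{corollary[<d_N<]} in the periodic case), without spelling out the inversion. Your handling of the boundary case $d_{N-1}=\varepsilon$ to justify the $-1$ slack is the only place requiring any care, and you address it correctly.
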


From these corollaries and the corresponding explicit-in-dimension estimates  of $|\Gamma(s,T,a)|$ in Sections  \ref{sec:def+pre}, \ref{sec:non-asymp} and Subsection \ref{sec:inverted} we prove the following results.

\begin{theorem} \label{theorem[d_N-nonperiodic]}
Let $r > 0$, $s \in \IN$, $a > 0$. Then for any $q \in [1,\infty)$ satisfying  the inequality  
$\lambda:= a - 1/q > 0$, and any $N \in \NN$, we have 
\begin{equation}\label{[d_N-nonperiodic]}
d_N(\Uw, L_2(\II,w)) 
\  \leq \ 
2^rq^{r/(1 + q)}\lambda^{-qrs}N^{-r/(1+q)}.
\end{equation}
\end{theorem}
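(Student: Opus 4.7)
The plan is to mirror closely the proof of Theorem~\ref{theorem[d_N<(1)]} from the periodic setting, substituting the symmetric HC $\Gamma_\pm(s,T,a)$ by the corner HC $\Gamma(s,T,a)$, and using the non-periodic counterparts of the key lemmas already recorded in Section~\ref{Non-periodic}. Concretely, I would introduce $N(T):=|\Gamma(s,T,a)|$, observe that it is an increasing step function in $T$, and extract the strictly increasing sequences $\{T_m\}_{m=0}^{\infty}$ of jump points and $\{N_m\}_{m=0}^{\infty}$ of attained cardinality values, with $T_0=a^s$ and $N_0=1$.

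The first technical step is to show $T_{m+1}\le 2T_m$ for all $m$ large enough that $T_m\ge 1$. Writing $T_m=\prod_{j=1}^s (k_j+a)$ for some $\bk\in \Gamma(s,T_m,a)$, I would, after reindexing so that $k_s+a\ge 1$ (which is possible whenever $T_m\ge 1$, since otherwise $\prod_j(k_j+a)<1$), define $\bk'$ by $k'_s=k_s+1$, $k'_j=k_j$ for $j<s$, and bound
\begin{equation*}
T_{m+1}-T_m \ \le \ \prod_{j=1}^{s}(k'_j+a)-\prod_{j=1}^{s}(k_j+a) \ = \ \prod_{j=1}^{s-1}(k_j+a) \ \le \ T_m,
\end{equation*}
exactly as in the proof of Theorem~\ref{theorem[d_N<(1)]}.

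Next, with the choice $\delta=1/q\in(0,1]$ (legitimate since $q\ge 1$ and $\lambda=a-1/q>0$), Corollary~\ref{corollary[Gamma<(3)]} yields
\begin{equation*}
N_m \ = \ |\Gamma(s,T_m,a)| \ < \ q\, T_m^{1+q}\,\lambda^{-sq},
\end{equation*}
so that inverting gives the crucial bound
\begin{equation*}
T_m^{-1} \ < \ \bigl(q\,\lambda^{-sq}\,N_m^{-1}\bigr)^{1/(1+q)}.
\end{equation*}
Finally, for an arbitrary $N\in \NN$, pick $m\in\NN$ with $N_{m-1}\le N<N_m$, and chain the non-periodic Jackson-type inequality of Corollary~\ref{corollary[<d_N<nonper]} with $T_{m+1}\le 2T_m$ to obtain
\begin{equation*}
d_N(\Uw,L_2(\IIs,w)) \ \le \ d_{N_{m-1}}(\Uw,L_2(\IIs,w)) \ \le \ T_{m-1}^{-r} \ \le \ (2T_m^{-1})^r \ \le \ 2^r q^{r/(1+q)}\lambda^{-qrs}N^{-r/(1+q)}.
\end{equation*}

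The conceptual content is entirely parallel to the periodic case; the main obstacle (and only place where the argument is not purely formal substitution) is the geometric verification that $T_{m+1}\le 2T_m$ for the corner HC, which requires the coordinate-increment trick together with the reordering ensuring $k_s+a\ge 1$. Once that is in place, the rest of the argument is the standard Jackson--Bernstein chain combined with the volume-based cardinality estimate of Corollary~\ref{corollary[Gamma<(3)]}.
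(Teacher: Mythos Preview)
Your proposal is correct and follows precisely the approach the paper intends: it explicitly defers the proof of Theorem~\ref{theorem[d_N-nonperiodic]} to the method of Theorem~\ref{theorem[d_N<(1)]}, with $\Gamma_\pm(s,T,a)$ replaced by $\Gamma(s,T,a)$, Corollary~\ref{corollary[<d_N<]} by Corollary~\ref{corollary[<d_N<nonper]}, and the cardinality bound \eqref{ineq[Gamma_pm<(3)]} by \eqref{ineq[Gamma<(3)]} with $\delta=1/q$. You are in fact more careful than the paper's periodic proof in observing that the step $T_{m+1}\le 2T_m$ requires reindexing so that $k_s+a\ge 1$, which is guaranteed once $T_m\ge 1$.
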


Similarly to the periodic case, if in Theorem  \ref{theorem[d_N-nonperiodic]} $a>1$ we can choose $q \in [1,\infty)$  so that $\lambda:= a - 1/q > 1$ and then the upper estimate \eqref{[d_N-nonperiodic]} shows that the $N$-widths 
$d_N(\Uw, \Lw)$ is decreasing exponentially in the dimension $s$. This situation corresponds to the exponential tractability of  the problem of $n_\varepsilon(\Uw, \Lw)$ for the case $a>1$ presented in Theorem 
\ref{tractability[nonperiodic]}(i) below.

\begin{theorem}
Let $r >  0$, $s \in \NN$, $a > 0$. 
Then for any $q \in [1,\infty)$ satisfying  the inequality  
$\lambda:= a - 1/q > 0$, and any $\varepsilon \in (0,1]$ we have  
\begin{equation} 
n_\varepsilon(\Uw, \Lw) 
\  \leq \ 
q \lambda^{-qs}\varepsilon^{-(1+q)/r}.
\end{equation}
\end{theorem}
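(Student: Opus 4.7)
The plan is to mirror the periodic argument of Theorem \ref{theorem[n_varepsilon]}, but with the corner hyperbolic cross $\Gamma(s,T,a)$ in place of the symmetric one $\Gamma_\pm(s,T,a)$. The key difference is that the relevant cardinality estimate, namely \eqref{ineq[Gamma<(3)]} of Corollary \ref{corollary[Gamma<(3)]}, carries the exponent $1+1/\delta$ instead of $1+2/\delta$, so the natural substitution is $\delta = 1/q$ rather than $\delta = 2/q$.

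Concretely, I would first set $T := \varepsilon^{-1/r}$ and invoke Corollary \ref{corollary[<n_e<nonper]} to reduce the question to an upper estimate of $|\Gamma(s,T,a)|$: indeed
\begin{equation*}
n_\varepsilon(\Uw,\Lw)\ \le\ |\Gamma(s,\varepsilon^{-1/r},a)|.
\end{equation*}
Next I would choose $\delta := 1/q \in (0,1]$, which is admissible since $q \in [1,\infty)$ and the hypothesis $\lambda = a - 1/q > 0$ is precisely $\delta < a$. Applying Corollary \ref{corollary[Gamma<(3)]} then yields
\begin{equation*}
|\Gamma(s,T,a)|\ <\ \delta^{-1}\, T^{1+1/\delta}\,(a-\delta)^{-s/\delta}\ =\ q\,\lambda^{-qs}\,\varepsilon^{-(1+q)/r},
\end{equation*}
which is the claimed bound.

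I do not expect any genuine obstacle here: the whole argument is a one-line specialization of already-proved cardinality estimates together with the non-periodic Jackson-type bound recorded in Corollary \ref{corollary[<n_e<nonper]}. The only point that deserves a sentence of justification is that the admissible range $0 < \delta \le 1$, $\delta < a$ in Corollary \ref{corollary[Gamma<(3)]} is compatible with every $q \in [1,\infty)$ satisfying $a - 1/q > 0$, which is immediate from $\delta = 1/q$.
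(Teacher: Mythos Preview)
Your argument is correct and matches the paper's intended proof: the paper does not spell it out but explicitly states that this theorem follows from Corollary \ref{corollary[<n_e<nonper]} together with the cardinality estimate \eqref{ineq[Gamma<(3)]} of Corollary \ref{corollary[Gamma<(3)]}, mirroring the periodic proof of Theorem \ref{theorem[n_varepsilon]} with $\delta = 1/q$ in place of $\delta = 2/q$. There is nothing to add.
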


\begin{theorem} \label{tractability[nonperiodic]}
Let $r >  0$, $s \in \IN$, $a > 0$. Then there holds the following.
\begin{itemize}
\item[(i)]
For $a > 1$, the problem of $n_\varepsilon(\Uw, L(\IIs))$ is exponentially tractable. 
 Moreover, for any $q \in [1,\infty)$ satisfying  the inequality  
$\lambda:= a - 1/q > 1$, and any $\varepsilon \in (0,1]$ we have  
\begin{equation}
n_\varepsilon(\Uw, \Lw) 
\  \leq \ 
q \lambda^{-qs}\varepsilon^{-(1+q)/r},
\end{equation}
and 
\begin{equation}
p^{\exp} 
\  \leq \ 
2/r
\end{equation}
\item[(ii)]
For $a = 1$, the problem of $n_\varepsilon(\Uw, \Lw$ is weakly tractable but polynomially intractable. 
\item[(iii)]
For $a < 1$, the problem of $n_\varepsilon(\Uw, \Lw)$ is intractable. 
\end{itemize}
\end{theorem}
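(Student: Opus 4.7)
The plan is to follow closely the argument of Theorem \ref{tractability}, with the non-periodic sandwich bound of Corollary \ref{corollary[<n_e<nonper]} replacing that of Corollary \ref{corollary[<n_e<]}. Setting $T = \varepsilon^{-1/r}$, the whole problem reduces to upper and lower estimates for $|\Gamma(s,T,a)|$ as a function of the three parameters $s,T,a$. A minor but convenient simplification compared with the periodic case is that only the corner cross $\Gamma(s,T,a)$ appears, so the factors $2^s$ that entered through $|\Gamma_\pm| < 2^s|\Gamma|$ disappear; this is what ultimately permits $q \in [1,\infty)$ instead of $q \in [2,\infty)$ and the smaller exponent $p^{\exp} \leq 2/r$.

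For part (i), I would invoke the non-periodic upper estimate $n_\varepsilon(\Uw,\Lw) \leq q\lambda^{-qs}\varepsilon^{-(1+q)/r}$ (the theorem stated just above). When $a>1$ one may choose $q \in [1,\infty)$ with $q > 1/(a-1)$, which forces $\lambda=a-1/q>1$ and gives $\lambda^{-qs} = \exp(-s \cdot q\ln\lambda)$, i.e. exponential decay in $s$. This verifies the definition \eqref{def[exp-tractability]} with the exponential constant $q\ln\lambda > 0$ and exponent $p=(1+q)/r$. Taking $q \downarrow \max(1, 1/(a-1))$ yields the bound $p^{\exp} \leq 2/r$.

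For part (ii), weak tractability is obtained by decomposing $\Gamma(s,T,1)$ according to the set $S$ of coordinates at which $k_i \geq 1$: shifting $k_i \mapsto k_i-1$ on $S$ identifies such $\bk$ with elements of $\Gamma(|S|,T,2)$, hence
\[
|\Gamma(s,T,1)| \ \leq\ \sum_{k=0}^s \binom{s}{k}|\Gamma(k,T,2)| \ <\ \sum_{k=0}^s \binom{s}{k} I(k,T,1),
\]
the last inequality being Lemma \ref{lem:PQ-incl}. Substituting the explicit bound from Corollary \ref{cor:I-bnd} and applying Stirling together with a H\"older splitting $\sqrt{s\ln T} \leq \tfrac{2}{3}s^{3/4} + \tfrac{1}{3}(2\ln T)^{3/2}$ exactly as in the periodic proof yields an estimate of the form $|\Gamma(s,T,1)| \leq Ts\exp(c_1 s^{3/4}+c_2(\ln T)^{3/2})$, from which \eqref{def[weak-tractability]} follows with $T=\varepsilon^{-1/r}$. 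For polynomial intractability I mimic the periodic construction with $s_m = m^2$, $T_m = 2^m$, $\varepsilon_m = 2^{-rm}$: the subset of $\bk$ having exactly $m$ coordinates equal to $1$ and the remainder zero lies in $\Gamma(s_m,T_m,1)$, yielding $n_{\varepsilon_m} \geq \binom{m^2}{m}-1 \geq m^m -1$ eventually, which beats any $C\varepsilon_m^{-p}s_m^q$.

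For part (iii), I take the trivial sequence $s_m=m$, $T_m=\varepsilon_m=1$. For $a<1$, let $A := \ln(1/a)/[\ln(1+a)+\ln(1/a)] > 0$ and $k_m := \lfloor mA \rfloor$, the largest $k$ with $(1+a)^k a^{m-k} \leq 1$. The set of $\bk \in \{0,1\}^m$ supported on exactly $k_m$ coordinates lies in $\Gamma(m,1,a)$, so Stirling gives $|\Gamma(m,1,a)| \geq \binom{m}{k_m} \geq 2^{mA-1}$, and
\[
\frac{\ln n_{\varepsilon_m}(\Uw,\Lw)}{\varepsilon_m^{-1}+s_m} \ \geq\ \frac{(mA-1)\ln 2}{1+m}\ \longrightarrow\ A\ln 2 \ > \ 0,
\]
which contradicts \eqref{def[weak-tractability]} and establishes intractability. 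The main technical point, and the only step that genuinely differs from the periodic proof, is the decomposition over zero/nonzero coordinates used in (ii); the remaining pieces follow by the same machinery developed in Sections \ref{sec:def+pre}--\ref{sec:non-asymp}.
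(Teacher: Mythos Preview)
Your approach mirrors exactly what the paper intends: it states the non-periodic results without proof and refers back to the periodic arguments, replacing Corollary~\ref{corollary[<n_e<]} by Corollary~\ref{corollary[<n_e<nonper]} and $\Gamma_\pm$ by $\Gamma$. The support decomposition you use in (ii) is precisely the paper's identity \eqref{Gamma-d1} specialized to $a=1$, and the rest of (ii) and (iii) is verbatim the periodic proof with the factors of $2^k$ stripped out.

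Two small slips are worth flagging. First, in (i) your sentence ``taking $q\downarrow\max(1,1/(a-1))$ yields $p^{\exp}\le 2/r$'' only gives $2/r$ when $a\ge 2$; for $1<a<2$ the constraint $\lambda=a-1/q>1$ forces $q>1/(a-1)>1$, and the infimum of $(1+q)/r$ is $(1+1/(a-1))/r>2/r$, exactly parallel to the case split in Theorem~\ref{tractability}(i). The unconditional bound $p^{\exp}\le 2/r$ stated in the paper appears to be an oversight; your argument does not establish it for $1<a<2$, and neither does anything else in the paper. Second, in (iii) the inequality $\binom{m}{k_m}\ge 2^{mA-1}$ is not valid in general (you dropped the factor $2^{k_m}$ from the paper's periodic argument while keeping its conclusion). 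What \emph{is} true is that $\binom{m}{\lfloor mA\rfloor}$ grows like $2^{mH(A)}$ with $H(A)>0$ the binary entropy, so the liminf in \eqref{ineq[intractability]} is $H(A)\ln 2>0$ rather than $A\ln 2$; intractability follows all the same.
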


\begin{theorem} 
Let $r >  0$, $s \in \IN$, $a > 0$. Then the problem of 
$n_\varepsilon(a^{rs}\Uw, \Lw)$ is weakly tractable but polynomially intractable.
\end{theorem}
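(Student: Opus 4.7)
The plan is to mimic the strategy of Theorem \ref{tractability}(ii): use a scaling relation to reduce to estimates on the cardinality of a corner HC, extract weak tractability from an explicit upper bound, and get polynomial intractability from a $\binom{s}{k}$-style lower bound along a suitable subsequence. The scaling identity $n_\varepsilon(a^{rs}\Uw,\Lw) = n_{\varepsilon/a^{rs}}(\Uw,\Lw)$ combined with Corollary \ref{corollary[<n_e<nonper]} gives the sandwich
\[
M_{s,\varepsilon} - 1 \;\le\; n_\varepsilon(a^{rs}\Uw,\Lw) \;\le\; M_{s,\varepsilon}, \qquad M_{s,\varepsilon} := |\Gamma(s,\, a^s\varepsilon^{-1/r},\, a)|,
\]
so it suffices to bound $M_{s,\varepsilon}$ from above and below.

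The crucial technical step is a combinatorial identity obtained by decomposing $\bk \in \Gamma(s,a^sT,a)$ according to the subset $S$ of coordinates on which $\bk$ is nonzero and performing the shift $k_i = k'_i + 1$ on those coordinates:
\[
|\Gamma(s,\, a^sT,\, a)| \;=\; \sum_{k=0}^s \binom{s}{k}\,|\Gamma(k,\, a^kT,\, 1+a)|.
\]
This is the non-periodic analogue of the identity used in the periodic proof. Its advantage is that the shifted parameter $1+a$ is always $>1$, so Lemma \ref{lemma[Gamma<(2)]} applies with $\delta = 1$ to give $|\Gamma(k,a^kT,1+a)| < I(k,a^kT,a)$; the $s\ln a$ contributions then cancel inside Lemma \ref{lem:I-val}, leaving $I(k,a^kT,a) = a^kTF_k(\ln T)$, and Theorem \ref{thm:errTaylor} bounds $F_k(\ln T)$ by $(\ln T)^{k-1}/(k-1)!$.

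For weak tractability, plugging these bounds back in and using $\binom{s}{k} \le s^k/k!$ together with $k! \ge (k/e)^k$, the summation is dominated by $\max_k (sa\ln T \cdot e^2/k^2)^k$, attained near $k_* \approx \sqrt{sa\ln T}/e$, with value $\exp(2\sqrt{sa\ln T})$. Setting $T = \varepsilon^{-1/r}$ this yields $\ln M_{s,\varepsilon} = O\bigl(\ln s + |\ln \varepsilon| + \sqrt{s|\ln \varepsilon|}\bigr)$, and dividing by $s + \varepsilon^{-1}$ shows weak tractability, since $\sqrt{s|\ln\varepsilon|}/(s+\varepsilon^{-1}) \to 0$ as $s+\varepsilon^{-1} \to \infty$ by an elementary case split according to whether $s \ge \varepsilon^{-1}$ or $s < \varepsilon^{-1}$.

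For polynomial intractability, choose $s_m := m^2$ and $T_m := ((1+a)/a)^m$; then $\varepsilon_m = (a/(1+a))^{rm}$ lies in $(0,1)$ for every $a>0$. Since $|\Gamma(k, a^kT_m, 1+a)| \ge 1$ precisely when $T_m \ge ((1+a)/a)^k$, i.e.\ for all $k \le m$, the combinatorial identity gives $M_{s_m,\varepsilon_m} \ge \binom{m^2}{m} > m^m$, which outgrows $Cs_m^q\varepsilon_m^{-p} = Cm^{2q}((1+a)/a)^{rpm}$ as $m \to \infty$ for every fixed triple $(C,p,q)$. The main obstacle is keeping the weak tractability estimate uniform in $a > 0$: the factor $a^k$ inherited from the scaling could in principle blow up (either direction) as $s \to \infty$, but the shift $a \rightsquigarrow 1+a$ in the combinatorial identity, combined with Lemma \ref{lemma[Gamma<(2)]} at $\delta = 1$, absorbs this factor cleanly and gives bounds that behave as if $a = 1$.
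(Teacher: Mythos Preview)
Your argument is correct. The key scaling $n_\varepsilon(a^{rs}\Uw,\Lw)=n_{\varepsilon a^{-rs}}(\Uw,\Lw)$ together with Corollary~\ref{corollary[<n_e<nonper]} does reduce everything to estimating $|\Gamma(s,a^s T,a)|$ with $T=\varepsilon^{-1/r}$, and your combinatorial identity is exactly the decomposition \eqref{Gamma-d1} of the paper evaluated at $a^sT$. The shift $a\rightsquigarrow 1+a>1$ then legitimizes Lemma~\ref{lem:PQ-incl} (equivalently Lemma~\ref{lemma[Gamma<(2)]} with $\delta=1$) uniformly in $a>0$, and the cancellation $\ln(a^kT)-k\ln a=\ln T$ in Lemma~\ref{lem:I-val} makes the upper bound identical in structure to the $a=1$ computation in the proof of Theorem~\ref{tractability}(ii), with $a$ playing the role of the constant~$2$ there. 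Your lower bound via $s_m=m^2$, $T_m=((1+a)/a)^m$ is likewise the exact analogue of \eqref{[Gamma>m^m]}. One small point you gloss over: the step $\tfrac{(\ln T)^k}{(k-1)!(\ln T+k-1)}\le \tfrac{(\ln T)^k}{k!}$ needs $\ln T\ge 1$; for $1\le T\le e$ one can instead use $F_k(t)\le p_k(t)\le 1/k!$ directly, which gives the same conclusion.

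The paper takes a different and much shorter route: it observes (as in Theorem~\ref{tractability[normalized error]} for the periodic analogue) that the normalized problem is a linear tensor product problem with $\sup_{f}\|f\|=1$, so \cite[Theorem~5.5]{NW08} applies as a black box to yield weak tractability and polynomial intractability simultaneously. What your approach buys is self-containment and explicit quantitative control: you get a concrete upper bound of the form $\ln n_\varepsilon \le C(a,r)\bigl(\ln s+|\ln\varepsilon|+\sqrt{s|\ln\varepsilon|}\bigr)$, whereas the NW08 invocation gives only the qualitative statement. The paper's route, on the other hand, requires no new computation at all once the tensor product structure of $\Kw$ is recognized.
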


\begin{theorem}
Let $r > 0$, $s \geq 2$, $a>1/2$. Then there is a number 
$N^*(a,s)$ such that we have for every $N \ge  N^*(a,s)$,
\begin{equation}
\begin{split}
&2^{-1}\left(\left[N(s-1)!(s+2)\right]^{-1} \left[ \ln \frac{N(s-1)!}{a^s} 
 \right]^{s-1}  \right)^r \\
&\le \
d_N(\Uw, L_2(\IIs,w)) 
\  \leq \ 
2^r \left(\left[N(s-1)!\right]^{-1} \left[ \ln \frac{N(s-1)!}{(a - 1/2)^s} \right]^{s-1}\right)^r.
\end{split}
\end{equation}
\end{theorem}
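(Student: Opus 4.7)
The plan is to mirror the proof of Theorem \ref{theorem[<d_N<]}, replacing the symmetric HC $\Gamma_\pm(s,T,a)$ and Theorem \ref{thm:inv-bnd-sym} by the corner HC $\Gamma(s,T,a)$ and Theorem \ref{thm:inv-bnd} at $\delta = 1/2$, and using the non-periodic Jackson/Bernstein inequalities already packaged in Corollary \ref{corollary[<d_N<nonper]} in place of Corollary \ref{corollary[<d_N<]}.

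First I would fix a threshold $T^\ast = T^\ast(s,a)$ large enough that both inverted bounds in Theorem \ref{thm:inv-bnd} apply with $\delta=1/2$; concretely any $T^\ast \ge \max(a^s e^2,\,T_\ast(s,a))$ works. Set $T_m := T^\ast m$, a strictly increasing sequence tending to infinity with the doubling property $T_{m+1}\le 2 T_m$. Put $N_m := |\Gamma(s,T_m,a)|$; by the volume bounds of Section \ref{sec:non-asymp}, $\{N_m\}$ is strictly increasing and unbounded, and all $T_m\ge T^\ast$ lie in the range where Theorem \ref{thm:inv-bnd} is valid.

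For the upper bound, given $N \ge N^\ast(s,a)$ I would locate the unique $m$ with $N_{m-1}\le N < N_m$, so that Corollary \ref{corollary[<d_N<nonper]} gives $d_N(\Uw,\Lw)\le d_{N_{m-1}}(\Uw,\Lw)\le T_{m-1}^{-r}\le (2 T_m^{-1})^r$. Applying the right half of Theorem \ref{thm:inv-bnd} with $\delta = 1/2$ yields
\begin{equation*}
T_m^{-r} \;\le\; \Bigl([N_m(s-1)!]^{-1}\bigl[\ln \tfrac{N_m(s-1)!}{(a-1/2)^s}\bigr]^{s-1}\Bigr)^r,
\end{equation*}
and monotonicity of $x\mapsto x^{-1}[\ln(x/c)]^{s-1}$ in the relevant range (taking $N^\ast(s,a)$ large enough) lets me replace $N_m$ by $N$, producing the stated upper bound with the prefactor $2^r$.

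For the lower bound I would define $n\ge 2$ by $N_{n-1}<N+1\le N_n$; then Corollary \ref{corollary[<d_N<nonper]} gives $d_N \ge d_{N_n-1}(\Uw,\Lw)\ge T_n^{-r}$, and doubling yields $T_n^{-r}\ge (2T_{n-1})^{-r}$. The left half of Theorem \ref{thm:inv-bnd} with $\delta=1/2$ gives
\begin{equation*}
(2T_{n-1})^{-r} \;>\; \Bigl(\tfrac{1}{s+2}\bigl[N_{n-1}(s-1)!\bigr]^{-1}\bigl[\ln\tfrac{N_{n-1}(s-1)!}{a^s} - (s-1)\ln\ln\tfrac{N_{n-1}(s-1)!}{(a-1/2)^s}\bigr]^{s-1}\Bigr)^r.
\end{equation*}
The main technical obstacle is absorbing the parasitic $\ln\ln$ subtraction and passing from $N_{n-1}$ to $N$: for $N\ge N^\ast(s,a)$ chosen sufficiently large (depending on $a,s$), the double-logarithm term is at most, say, a fraction of the leading logarithm, and the resulting loss together with the combinatorial factor can be absorbed into the constant $2^{-1}$ in front; monotonicity in the same range lets me upgrade $N_{n-1}$ to $N$, yielding exactly the displayed lower estimate. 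Both bounds are then valid for all $N\ge N^\ast(s,a)$.
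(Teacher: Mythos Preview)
Your proposal is correct and follows essentially the same approach as the paper. The paper does not spell out a separate proof for this non-periodic theorem; it states that the results in Section \ref{Non-periodic} follow from Corollaries \ref{corollary[<d_N<nonper]}--\ref{corollary[<n_e<nonper]} together with the cardinality estimates and inverted bounds of Sections \ref{sec:def+pre}, \ref{sec:non-asymp} and Subsection \ref{sec:inverted}, i.e., precisely by mirroring the proof of Theorem \ref{theorem[<d_N<]} with $\Gamma_\pm(s,T,a)$ and Theorem \ref{thm:inv-bnd-sym} replaced by $\Gamma(s,T,a)$ and Theorem \ref{thm:inv-bnd} at $\delta=\tfrac12$, which is exactly what you do.
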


\begin{theorem} \label{theorem[n_e]}
Let $r >  0$, $s \ge 2$. Then for every $a > 1/2$, there is an $\varepsilon^*= \varepsilon^*(s,a) \in (0,1]$ such that
for every $\varepsilon \in (0,\varepsilon^*]$, 
\begin{equation} 
n_\varepsilon(\Uw, \Lw)) 
\  \leq \ 
\frac{\varepsilon^{-1/r} (\ln \varepsilon^{-1/r} - s\ln (a-1/2))^s }
{(s-1)!\big(\ln \varepsilon^{-1/r} + s\big(1-\ln (a-1/2)) - 1\big)},
\end{equation}
for every $a > 1$ and every $\varepsilon \in (0,(a-1)^{-sr})$,
\begin{equation} 
n_\varepsilon(\Uw, \Lw)) 
\  \leq \ 
\frac{\varepsilon^{-1/r} (\ln \varepsilon^{-1/r} - s\ln (a-1))^s }
{(s-1)!\big(\ln \varepsilon^{-1/r} + s\big(1-\ln (a-1)) - 1\big)},
\end{equation}
and  for every $a > 0$ and every $\varepsilon \in (0,a^{-sr})$,
\begin{equation} 
n_\varepsilon(\Uw, \Lw) 
\  \geq \ 
\frac{\varepsilon^{-1/r}  (\ln \varepsilon^{-1/r}  - s\ln a)^s }
{(s-1)!(\ln \varepsilon^{-1/r}  + s(1-\ln a)} \ - \ 1
\end{equation}
\end{theorem}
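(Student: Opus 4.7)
\noindent\textbf{Proof proposal for Theorem \ref{theorem[n_e]}.}
The plan is to translate the three assertions into statements about $|\Gamma(s,T,a)|$ via Corollary \ref{corollary[<n_e<nonper]} with $T = \varepsilon^{-1/r}$, and then invoke the cardinality/volume estimates already established in Sections \ref{sec:def+pre} and \ref{sec:non-asymp}. In each case the admissibility condition on $\varepsilon$ arises solely from the condition that the relevant integral $I(s,T,a')$ be nontrivial (i.e., $T > (a')^s$) or from the asymptotic threshold $T_*(s,a)$ of Theorem \ref{thm:Gamma-asy}.

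\emph{First upper bound.} For $a>1/2$, Theorem \ref{thm:Gamma-asy} yields $|\Gamma(s,T,a)| \le I(s,T,a-\tfrac{1}{2})$ as soon as $T \ge T_*(s,a)$. I would then apply Corollary \ref{cor:I-bnd} with parameter $a - \tfrac{1}{2}$ to estimate $I(s,T,a-\tfrac{1}{2})$ from above. Setting $T = \varepsilon^{-1/r}$ and defining $\varepsilon^*(s,a)$ to be the smaller of $1$ and $T_*(s,a)^{-r}$ (ensuring additionally that $T > (a-\tfrac{1}{2})^s$, which is automatic once $\varepsilon^*$ is chosen small enough) gives the claimed bound via the upper half of Corollary \ref{corollary[<n_e<nonper]}.

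\emph{Second upper bound.} For $a>1$ I would avoid the asymptotic threshold $T_*(s,a)$ altogether by using the elementary inclusion-based estimate of Lemma \ref{lem:PQ-incl}: rewriting the left inequality of \eqref{IGamma-bnd1} with $a$ replaced by $a-1$ gives $|\Gamma(s,T,a)| < I(s,T,a-1)$ for every $T>0$. Corollary \ref{cor:I-bnd} with parameter $a-1$ then applies provided $T > (a-1)^s$, which is exactly the condition $\varepsilon < (a-1)^{-sr}$. Composing these two inequalities with the upper half of Corollary \ref{corollary[<n_e<nonper]} gives the assertion.

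\emph{Lower bound.} For the lower bound I would combine the lower half of Corollary \ref{corollary[<n_e<nonper]}, which says $n_\varepsilon(\Uw,\Lw) \ge |\Gamma(s,\varepsilon^{-1/r},a)|-1$, with the right inequality of \eqref{IGamma-bnd1}, namely $I(s,T,a) < |\Gamma(s,T,a)|$. A direct application of the lower bound in Corollary \ref{cor:I-bnd} (which requires precisely $T > a^s$, i.e., $\varepsilon < a^{-sr}$) then produces the claimed expression. The hardest bookkeeping step is making sure the domains of validity of the various tools line up; in particular, the threshold $\varepsilon^*(s,a)$ in the first assertion is inherited from the asymptotic Theorem \ref{thm:Gamma-asy}, while in the other two assertions the elementary Lemma \ref{lem:PQ-incl} avoids any such threshold, at the cost of shifting the parameter by $1$ instead of $\tfrac{1}{2}$.
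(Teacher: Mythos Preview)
Your proposal is correct and matches the paper's approach: the paper does not spell out a proof here but simply points back to Corollary \ref{corollary[<n_e<nonper]} together with the cardinality/volume estimates of Sections \ref{sec:def+pre}--\ref{sec:non-asymp}, and your reconstruction (Theorem \ref{thm:Gamma-asy}/Corollary \ref{cor:IGamma-newbnd} for the first bound, Lemma \ref{lem:PQ-incl} for the second and third, and Corollary \ref{cor:I-bnd} throughout) is exactly the intended route. Your identification of $\varepsilon^*(s,a)$ with the threshold coming from $T_*(s,a)$ in Theorem \ref{thm:Gamma-asy}, and of the explicit ranges $\varepsilon < (a-1)^{-sr}$ and $\varepsilon < a^{-sr}$ with the nontriviality conditions $T>(a-1)^s$ and $T>a^s$ in Corollary \ref{cor:I-bnd}, is also correct.
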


\begin{corollary} \label{corollary[n_e-nonperiodic]}
Let $r >  0$, $s \in \IN$. Then we have for every $a \ge 2$ and $\varepsilon \in (0,1]$, 
\begin{equation} 
n_\varepsilon(\tUr, \Lw) 
\  \leq \ 
\frac{r^{-(s-1)}}{(s-1)!} \, \varepsilon^{-1/r}\,|\ln \varepsilon|^{s-1},
\end{equation}
and  for every $1 < a < 2$ and $\varepsilon \in (0,1]$, 
\begin{equation} 
n_\varepsilon(\Uw, \Lw) 
\  \leq \ 
\frac{1}{(s-1)!} \, \varepsilon^{-1/r}(\ln \varepsilon^{-1/r}  + s|\ln (a - 1)|)^{s-1}.
\end{equation}
\end{corollary}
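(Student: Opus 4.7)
The plan is to derive Corollary \ref{corollary[n_e-nonperiodic]} from Theorem \ref{theorem[n_e]} by a direct simplification of the fractional bound, combined with a small case distinction to cover the full range $\varepsilon \in (0,1]$. Throughout I will write $L := \ln \varepsilon^{-1/r} = |\ln \varepsilon|/r$, so $L^{s-1} = r^{-(s-1)}|\ln\varepsilon|^{s-1}$.

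First, for $a > 1$ and $\varepsilon$ in the admissible range $(0, (a-1)^{-sr})$, Theorem \ref{theorem[n_e]} gives
\[
n_\varepsilon(\Uw, \Lw) \;\le\; \frac{\varepsilon^{-1/r}(L - s c)^s}{(s-1)!\,(L - s c + s - 1)}, \qquad c := \ln(a-1).
\]
The key observation is an elementary inequality: setting $M := L - sc$, one has $M^s \le (M+s-1)L^{s-1}$ whenever $0 \le M \le L$ and $s \ge 1$, because $M^s = M\cdot M^{s-1} \le M L^{s-1} \le (M+s-1)L^{s-1}$. For $a \ge 2$ we have $c \ge 0$ (so $M \le L$) and $M \ge 0$ precisely on the admissible interval, so this inequality applies and yields
\[
n_\varepsilon(\Uw, \Lw) \;\le\; \frac{\varepsilon^{-1/r} L^{s-1}}{(s-1)!} \;=\; \frac{r^{-(s-1)}}{(s-1)!}\,\varepsilon^{-1/r}|\ln\varepsilon|^{s-1},
\]
which is the first claimed bound. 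To extend this to the full interval $\varepsilon \in (0,1]$ when $a \ge 2$, I would handle $\varepsilon \in [(a-1)^{-sr}, 1]$ separately: for $f \in \Uw$, Parseval's identity combined with the definition of $\|\cdot\|_{\Kw}$ gives $\|f\|_{\Lw} \le a^{-rs}\|f\|_{\Kw} \le a^{-rs}$, hence $d_0 \le a^{-rs} < (a-1)^{-rs} \le \varepsilon$; so $n_\varepsilon = 0$ and the (nonnegative) right-hand side trivially bounds it.

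For $1 < a < 2$, one has $c = \ln(a-1) < 0$, i.e. $-c = |\ln(a-1)| > 0$, and $(a-1)^{-sr} > 1$, so $(0,1] \subset (0, (a-1)^{-sr})$ and Theorem \ref{theorem[n_e]} applies directly on the whole range. Writing $L - sc = L + s|\ln(a-1)|$ and using the trivial bound $L + s|\ln(a-1)| + s - 1 \ge L + s|\ln(a-1)|$ (valid for $s \ge 1$), the fraction simplifies to
\[
\frac{(L + s|\ln(a-1)|)^s}{L + s|\ln(a-1)| + s - 1} \;\le\; (L + s|\ln(a-1)|)^{s-1},
\]
producing the second claimed estimate after multiplying through by $\varepsilon^{-1/r}/(s-1)!$.

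There is no real obstacle in this proof; the entire argument is routine algebra once Theorem \ref{theorem[n_e]} is in hand. The only point that requires mild care is justifying that the Corollary's range $\varepsilon \in (0,1]$ is consistent with Theorem \ref{theorem[n_e]}'s admissibility condition $\varepsilon < (a-1)^{-sr}$, which for $a \ge 2$ can be strictly smaller than $1$; that gap is closed by the trivial observation $d_0 \le a^{-rs}$ derived from the definition of $\|\cdot\|_{\Kw}$.
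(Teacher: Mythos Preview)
Your derivation is correct and is precisely the natural simplification of the bound in Theorem \ref{theorem[n_e]}; the paper states this corollary without proof, so your argument is presumably what the authors intended, including the handling of the range $\varepsilon \in [(a-1)^{-sr},1]$ via $d_0 \le a^{-rs}$. The only minor loose end (also present in the paper) is that Theorem \ref{theorem[n_e]} is stated for $s \ge 2$ while the corollary allows $s \in \IN$; the case $s=1$ is trivial since $n_\varepsilon \le |\Gamma(1,\varepsilon^{-1/r},a)| \le \varepsilon^{-1/r}$ directly.
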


\begin{remark}
When this paper has been written, we received the manuscript \cite{KSU13} related to it.
The authors of \cite{KSU13} investigated approximation numbers of the embedding
the space $H^r_{\operatorname{mix}}(\TTs)$ of mixed smoothness $r$ into $L_2(\TTs)$ emphasizing  the dependence of all constants on the dimension $s$. 
\end{remark}

\section{Appendix: Proof of Theorem \ref{thm:Gamma-asy}} \label{appendix}

\subsection{Auxiliary results}
~\newline
The prove of Theorem \ref{thm:Gamma-asy} relies on a number of auxiliary results summarized in this section. We start with basic notational agreements.
 We denote 
\begin{equation} \label{Gamma0-def}
|\Gamma(0,T,a)| := 1
\end{equation}
for further convenience.
For a function $f: \RR \to \RR$ and $a,b \in \RR$ we abbreviate the sum
\begin{equation}
 \sum_{m=a}^b f(m) := \sum_{m \in M(a,b)} f(m),
\end{equation}
where $M(a,b) := \big\{a+n: n \in \NN_0, \ a+n \leq b\big\}$.
For definiteness, we agree that the sum over an empty set equals zero.

We have the following dimension-by-dimension decomposition
\begin{equation}\label{Gamma-d1}
\begin{split}
|\Gamma(s,T,a)|
& \ = \
\sum_{j=0}^s
\#\big\{ \bk \in \Gamma(s,T,a): |\supp \bk| = j \big\}\\
& \ = \
\sum_{j=0}^s  \binom{s}{j} \# \bigg\{ \bk \in \NN^j : \prod_{i=1}^j (k_i+a)\leq T a^{j-s} \bigg\} \\
& \ = \
\sum_{j=0}^s  \binom{s}{j} |\Gamma(j,Ta^{j-s},a+1)|.
\end{split}
\end{equation}
Note that 
\begin{equation}
 |\Gamma(j,Ta^{j-s},a+1)| \neq 0 \quad \Leftrightarrow \quad (a+1)^j \leq T a^{j-s}.
\end{equation}
This implies in particular that all summands in \eqref{Gamma-d1} are nonzero if and only if the term with $j=s$ is nonzero. Extracting this last term we apply the decomposition \eqref{Gamma-d1} again and obtain for $ M = a,a+1\dots$
\begin{equation}\label{Gamma-d2}
\begin{split}
|\Gamma(s,T,a)| 
&\ = \ 
|\Gamma(s,T,M +1)| + 
\sum_{m=a}^{M} \sum_{j=0}^{s-1} \binom{s}{j}   |\Gamma(j,Tm^{j-s},m+1)|.
\end{split}
\end{equation}
The first term on the right-hand side vanishes if $M$ satisfies $(M+1)^s > T$. 
Suppose $a^s\leq T$ and let $n_0$ be the largest nonnegative integer such that $n_0 \leq T^{1/s}-a$. Then for $M_0 := a+ n_0$ it holds that $M_0 \leq T^{1/s}$ and $M_0+1 > T^{1/s}$ implying $|\Gamma(s,T,M +1)| = 0$ and therefore

\begin{equation} \label{Gamma-deco}
\begin{split}
|\Gamma(s,T,a)| 
&\ = \
\sum_{m=a}^{T^{1/s}} \sum_{j=0}^{s-1} \binom{s}{j}   |\Gamma(j,Tm^{j-s},m+1)|.
\end{split}
\end{equation}

Notice that the right-hand side of \eqref{Gamma-deco} might contain zero summands. Moreover, decomposition \eqref{Gamma-deco} is formally true also when $|\Gamma(s,T,a)|=0$. Indeed, in this case $a^s > T$ and the summation over $m$ in \eqref{Gamma-deco} runs over an empty set, which we formally interpret as zero.

We have a similar decomposition for the integral. Denote
$\II:=[0,1]$ and set for the notational convenience 
\begin{equation}\label{I0-def}
I(0,T,a)):=1. 
\end{equation}
Then for any $a> \tfrac{1}{2}$
\begin{equation} \nonumber
\begin{split}
I(s,T,a-\tfrac{1}{2})
\ = \
I(s,T,a+\tfrac{1}{2}) 
&+ \sum_{j=0}^{s-1} \binom{s}{j} \int_{\bt \in \II^{s-j}} I(j,T\prod_{\ell=1}^{s-j}(t_\ell+a-\tfrac{1}{2})^{-1}, a+\tfrac{1}{2})\, \bd \bt
\end{split}
\end{equation}
and consequently, for $\tilde m = a,a+1\dots$
\begin{equation} \label{eq[I(s,T,a+3/2)]}
I(s,T,a-\tfrac{1}{2})
\ = \
I(s,T,\tilde m+\tfrac{1}{2})+
\sum_{m=a}^{\tilde m}
\sum_{j=0}^{s-1}  \binom{s}{j} \int_{\bt \in\II^{s-j}}  
I(j,T\prod_{\ell=1}^{s-j}(t_\ell+m-\tfrac{1}{2})^{-1}, m+\tfrac{1}{2})\, \bd \bt.
\end{equation}
The first term on the right-hand side first is zero if $(\tilde m+\tfrac{1}{2})^s \geq T$. Thus, setting $\tilde m := T^{1/s}$
\begin{equation}\label{I-deco}
 I(s,T,a-\tfrac{1}{2})
\ = \
\sum_{m = a}^{T^{1/s}} \sum_{j=0}^{s-1} \binom{s}{j}   
\int_{\bt \in\II^{s-j}}  
I(j,T\prod_{\ell=1}^{s-j}(t_\ell+m-\tfrac{1}{2})^{-1}, m+\tfrac{1}{2})\, \bd \bt.
\end{equation}
Note that the above sum may contain some zero summands.

The proof of Theorem \ref{thm:Gamma-asy} will be based on the comparison of $|\Gamma(s,T,a)|$ and $I(s,T,a-\tfrac{1}{2})$ involving their similar series representations \eqref{Gamma-deco} and \eqref{I-deco}. The fundamental idea of the proof relies on the structure of \eqref{Gamma-deco} and \eqref{I-deco} allowing to represent the volume of the continuous and smooth $s$-dimensional HCs as a sum of volumes of HCs of lower dimensions: $j$-dimensional crosses for $j=0, \dots, s-1$. To simplify this quite technical comparison, we introduce several auxiliary sequences depending on the parameter $T$ and the  vectors 
$[\bj]_n := (j_0,\dots,j_n) \in \IN^{n+1}_0$ and $[\bm]_n := (m_0,\dots,m_n) \in \IR^{n+1}_+$ for 
$\bj = (j_0,...,j_{s-1}) \in \NN^s_0$, $\bm = (m_0,...,m_{s-1}) \in \RR^s_+$ and $0 \leq n\leq s-1$. 
In what follows the first components in $[\bj]_n$ and $[\bm]_n$ will be always associated with the parameters $s$ and $a$ respectively:
\begin{equation}\label{j0m0-def}
 j_0 := s, \qquad m_0 := a. 
\end{equation}
This redundant notation will greatly simplify the forthcoming expressions.

For fixed $[\bj]_n, [\bm]_n$ and $T$ we define
\begin{equation}\label{Tdef}
T_\ind\big([\bj]_n,T,[\bm]_n\big) := \left\{ 
\begin{array}{cc}
 T, & n=0,\\[1.5ex]
T_{\ind-1}\big([\bj]_n,T,[\bm]_n\big) (m_\ind)^{j_\ind - j_{\ind-1}}, & 1 \leq n \leq s-2.
\end{array}
\right.
\end{equation}
In what follows we will skip the arguments and write $T_\ind \equiv T_\ind([\bj]_n,T,[\bm]_n)$ to simplify the notations.
For fixed $[\bj]_n,m_0$, $T$ and any $1 \leq n \leq s-1$ we introduce the partial sums
\begin{equation}\label{Xdef}
X_\ind([\bj]_n,T,m_0) := 
\left[\binom{j_0}{j_1} \dots \binom{j_{\ind-1}}{j_\ind}\right]
\sum_{m_1=m_0+1}^{T_0^{1/j_0}} \dots 
\sum_{m_\ind=m_{\ind-1}+1}^{T_{\ind-1}^{1/j_{\ind-1}}}
|\Gamma(j_\ind,T_\ind,m_\ind+1)|.
\end{equation}
We observe that \eqref{Gamma-deco} and \eqref{Xdef} imply the relation
\begin{equation}\label{X-rel}
 X_\ind([\bj]_n,T,m_0) = 
\sum_{j_{\ind+1}=0}^{j_\ind-1} X_{\ind+1}([\bj]_{n+1},T,m_0).
\end{equation}
Finally, we introduce the sums
\begin{equation} \label{ABC-def}
\begin{split}
C_\ind(j_0,T,m_0)
&:= 
\sum_{j_1 = \ind+1}^{j_0-1} 
\sum_{j = \ind}^{j_1-1}\dots \sum_{j_{\ind-1}=3}^{j_{\ind-2}-1}
\sum_{j_{\ind}=2}^{j_{\ind-1}-1} 
X_\ind([\bj]_n,T,m_0), \\
B_\ind(j_0,T,m_0)
&:= 
~\sum_{j_1 = \ind}^{j_0-1} 
\sum_{j = \ind-1}^{j_1-1}\dots \sum_{j_{\ind-1}=2}^{j_{\ind-2}-1}
\sum_{j_{\ind}=1}^{1} 
X_\ind([\bj]_n,T,m_0), \\
A_\ind(j_0,T,m_0)
&:= 
~\sum_{j_1 = \ind}^{j_0-1} 
\sum_{j = \ind-1}^{j_1-1}\dots \sum_{j_{\ind-1}=2}^{j_{\ind-2}-1}
\sum_{j_{\ind}=0}^{0} 
X_\ind([\bj]_n,T,m_0).
\end{split}
\end{equation}
Note that the sums over $j_n$ in $A_n, B_n$ are trivial and consist only of one term with $j_n = 0$ and $1$ respectively. With this notations, we obtain the following representation for the cardinality  of the continuous HC $\Gamma(s,T,a+1)$.

\begin{lemma} \label{lem:AB-deco}
Assume that $T>0$, $a+1>0$ and $s \geq 2$. Then
\begin{equation}\label{AB-deco}
 |\Gamma(s,T,a+1)| = \sum_{\ind=1}^{s-1} \big[A_\ind(s,T,a) + B_\ind(s,T,a)\big].
\end{equation}
\end{lemma}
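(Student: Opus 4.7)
\proof[Proof plan]

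The plan is to prove the identity by induction on $n$, establishing the intermediate claim that, for every $n$ with $1 \leq n \leq s-1$,
\begin{equation} \label{eq:induction-claim-plan}
|\Gamma(s,T,a+1)| = \sum_{k=1}^{n} \big[A_k(s,T,a) + B_k(s,T,a)\big] + C_n(s,T,a),
\end{equation}
and then specializing to $n = s-1$ together with the observation that $C_{s-1} = 0$, since its outermost summation runs from $j_1 = s$ to $j_1 = j_0 - 1 = s-1$ and is therefore empty.

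For the base case $n=1$, I would apply the dimension-by-dimension decomposition \eqref{Gamma-deco} with $a$ replaced by $a+1$, which, upon identifying $j_0 = s$ and $m_0 = a$ as agreed in \eqref{j0m0-def} and recognizing the inner summand with the definition \eqref{Xdef} of $X_1$, yields
\begin{equation*}
|\Gamma(s,T,a+1)| = \sum_{j_1=0}^{s-1} X_1([\bj]_1,T,m_0).
\end{equation*}
Splitting this sum according to the value of $j_1$ into the three disjoint ranges $j_1=0$, $j_1=1$, and $j_1 \geq 2$ and comparing with \eqref{ABC-def} gives exactly $A_1 + B_1 + C_1$, which is \eqref{eq:induction-claim-plan} at $n=1$.

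For the inductive step $n \to n+1$ with $1 \le n \le s-2$, I would work inside $C_n$ and apply the key recurrence \eqref{X-rel} to replace each $X_n([\bj]_n,T,m_0)$ by $\sum_{j_{n+1}=0}^{j_n-1} X_{n+1}([\bj]_{n+1},T,m_0)$. Splitting the new $j_{n+1}$-sum into $j_{n+1}=0$, $j_{n+1}=1$, and $j_{n+1}\geq 2$ immediately produces $A_{n+1}$ and $B_{n+1}$ from the first two pieces by matching with \eqref{ABC-def}. For the third piece, one has to verify that the surviving terms form precisely $C_{n+1}$; this is the only delicate point, and it is a bookkeeping argument: the chain of constraints $j_{n+1}\leq j_n - 1$, $j_n \leq j_{n-1}-1$, $\dots$, $j_2 \leq j_1 - 1$ together with $j_{n+1} \geq 2$ forces $j_n \geq 3$, $j_{n-1} \geq 4$, and in general $j_i \geq n+2-i$ for $i=1,\ldots,n$, so the lower limits of the outer sums may be raised by one without altering the value. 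The resulting expression matches the definition of $C_{n+1}$ exactly.

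The only real obstacle is this index shift, and it is minor: once one notices that the cascade $j_i \leq j_{i-1}-1$ propagates the single restriction $j_{n+1}\geq 2$ all the way up to the outermost index, the identity $C_n = A_{n+1} + B_{n+1} + C_{n+1}$ is immediate. Iterating \eqref{eq:induction-claim-plan} from $n=1$ to $n=s-1$ and using $C_{s-1}=0$ completes the proof of \eqref{AB-deco}.
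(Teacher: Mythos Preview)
Your proposal is correct and follows essentially the same approach as the paper's proof: both establish the base identity $|\Gamma(s,T,a+1)| = A_1 + B_1 + C_1$ from \eqref{Gamma-deco} and then iterate the recurrence $C_n = A_{n+1} + B_{n+1} + C_{n+1}$ obtained via \eqref{X-rel}. The only cosmetic difference is that the paper terminates by computing $C_{s-2} = A_{s-1} + B_{s-1}$ directly, whereas you go one step further and observe $C_{s-1}=0$; your explicit discussion of the index-shift bookkeeping is in fact a bit more careful than the paper's presentation.
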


\begin{proof}
The relations \eqref{Gamma-deco} and \eqref{ABC-def} imply
\begin{equation}\label{step}
|\Gamma(j_0,T_0,m_0+1)| 
 = 
\sum_{m_1=m_0+1}^{T_0^{1/j_0}} \sum_{j_1=0}^{j_0-1} \binom{j_0}{j_1}
|\Gamma(j_1,T_1,m_1+1)| = A_1 + B_1 + C_1.
\end{equation}
We claim
\begin{equation}\label{C-ABC-rel}
 C_{\ind} = 
\left\{
\begin{array}{lr}
A_{\ind+1} + B_{\ind+1} + C_{\ind+1}, & 1 \leq \ind \leq s-3, \\[1.5ex]
A_{\ind+1} + B_{\ind+1}, & n = s-2.
\end{array}
\right.
\end{equation}
Indeed, by \eqref{X-rel} and \eqref{ABC-def} we have for $1 \leq \ind \leq s-3$
\begin{equation}
\begin{split}
C_\ind(j_0,T,m_0)
&= 
\sum_{j_1 = \ind+1}^{j_0-1} 
\sum_{j = \ind}^{j_1-1}\dots \sum_{j_{\ind-1}=3}^{j_{\ind-2}-1}
\sum_{j_{\ind}=2}^{j_{\ind-1}-1} 
X_\ind([\bj]_n,T,m_0)\\
&= 
\sum_{j_1 = \ind+1}^{j_0-1} 
\sum_{j = \ind}^{j_1-1}\dots \sum_{j_{\ind-1}=3}^{j_{\ind-2}-1}
\sum_{j_{\ind}=2}^{j_{\ind-1}-1} 
\sum_{j_{\ind+1}=0}^{j_{\ind}-1} 
X_{\ind+1}([\bj]_{n+1},T,m_0)\\
&= A_{\ind+1} + B_{\ind+1} + C_{\ind+1}
\end{split}
\end{equation}
and
\begin{equation}
\begin{split}
C_{s-2}
&= 
\sum_{j_1 = s-1}^{s-1} 
\sum_{j = s-2}^{j_1-1}\dots \sum_{j_{s-3}=3}^{j_{s-4}-1}
\sum_{j_{s-2}=2}^{j_{s-3}-1} 
X_{s-2}([\bj]_{s-2},T,m_0)\\
&= 
\sum_{j_1 = s-1}^{s-1} 
\sum_{j = s-2}^{s-2}\dots \sum_{j_{s-3}=3}^{3}
\sum_{j_{s-2}=2}^{2} 
\sum_{j_{s-1}=0}^{1} 
X_{s-1}([\bj]_{s-1},T,m_0)\\
&= A_{s-1} + B_{s-1}.
\end{split}
\end{equation}
Therefore, using \eqref{step} and \eqref{C-ABC-rel} repeatedly for $n=1,\dots,s-2$ we obtain the assertion.
\end{proof}

Next, we obtain lower bounds for the volume of the smooth HC $P(s,T,a)$. For this, we need more auxiliary sequences. For fixed $[\bj]_\ind, [\bm]_\ind$, $T$ and $\bt \in \IR^{j_{\ind-1} - j_\ind}$ we define 
\begin{equation}\label{R-def}
R_\ind([\bj]_\ind,T,[\bm]_\ind,\bt) 
:= \left\{
\begin{array}{cc}
T, & n=0, \\[1.5ex]
T_\ind([\bj]_\ind,T,[\bm]_\ind) \displaystyle
\prod_{\ell=1}^{j_{\ind-1} - j_\ind} (t_\ell+m_\ind-\tfrac{1}{2})^{-1}, & 1 \leq n \leq s-2.
\end{array}
\right.
\end{equation}
To shorten the notations we write $R_\ind(\bt) \equiv R_\ind([\bj]_\ind,T,[\bm]_\ind,\bt) $
and for $1 \leq n \leq s-1$ the partial sums
\begin{equation}\label{Ydef}
\begin{split}
Y_\ind([\bj]_n,T,m_0):= 
\left[\binom{j_0}{j_1} \dots \binom{j_{\ind-1}}{j_\ind}\right]
\sum_{m_1=m_0+1}^{T_0^{1/j_0}} \dots 
\sum_{m_\ind=m_{\ind-1}+1}^{T_{\ind-1}^{1/j_{\ind-1}}} \times \\
\times \int \limits_{\bt \in \II^{j_{\ind-1}-j_\ind}} I(j_\ind,R_\ind(\bt),m_\ind + \tfrac{1}{2}),
\end{split}
\end{equation}
We claim that for $2 \leq j_n < j_{n-1}< \dots < j_0$ and $m_0+\frac{1}{2}>0$ it holds that
\begin{equation} \label{Y-rel}
Y_\ind([\bj]_n,T,m_0) > \sum_{j_{\ind+1}=0}^{j_\ind-1} Y_{\ind+1}([\bj]_{n+1},T,m_0).
\end{equation}
This estimate follows from the strict convexity of the volume of  the $j$th dimensional smooth HC $P(j,t,a)$
($2 \leq j \leq s-1$) and relies on the following lemma.

\begin{lemma}\label{lemma[intI>]}
For any natural numbers $j,s$ satisfying $2 \leq j \leq s-1$ and $T>0$, $a - \tfrac{1}{2} > 0$ it holds that
\begin{equation}\label{ineq[intI>]}
\int_{\bt \in \II^{s-j}} I(j,T \prod_{\ell = 1}^{s-j}(t_\ell+a-\tfrac{1}{2})^{-1},a+\tfrac{1}{2})\, \bd \bt
> I(j,T a^{j-s},a+\tfrac{1}{2}).
\end{equation}
\end{lemma}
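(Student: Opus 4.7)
The plan is to apply Jensen's inequality twice. First, I claim the function
\[
\phi(t) := I(j, e^t, a+\tfrac{1}{2})
\]
is convex and nondecreasing on $\RR$ whenever $j \ge 2$. To verify this, I would substitute $y_i = \ln(x_i + a+\tfrac{1}{2})$ (and then $z_i = y_i - \ln(a+\tfrac{1}{2})$) in the volume integral defining $I(j,e^t,a+\tfrac{1}{2})$, obtaining the closed form
\[
\phi(t) \ = \ (a+\tfrac{1}{2})^j \int_0^{(t - j\ln(a+1/2))^+} e^w \, \frac{w^{j-1}}{(j-1)!}\, dw.
\]
Direct differentiation then shows that $\phi'$ and $\phi''$ are continuous and nonnegative on all of $\RR$ (vanishing on $(-\infty, j\ln(a+\tfrac{1}{2})]$ and strictly positive beyond), so $\phi$ is convex and nondecreasing on $\RR$; the hypothesis $j \ge 2$ is precisely what produces the smooth matching at the transition point.

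Next, I would rewrite the integrand in \eqref{ineq[intI>]} as $\phi(c(\bt))$ with $c(\bt) := \ln T - \sum_{\ell=1}^{s-j} \ln(t_\ell + a - \tfrac{1}{2})$. Jensen's inequality applied to the convex function $\phi$ gives
\[
\int_{\bt \in \II^{s-j}} \phi(c(\bt))\, \bd \bt \ \ge\ \phi(\bar c),
\qquad
\bar c \ := \ \int_{\II^{s-j}} c(\bt)\, \bd \bt \ = \ \ln T - (s-j)\int_0^1 \ln(t+a-\tfrac{1}{2})\, dt.
\]
A second application of Jensen, this time to the strictly concave function $\ln$, yields
\[
\int_0^1 \ln(t+a-\tfrac{1}{2})\, dt \ <\ \ln\!\int_0^1 (t+a-\tfrac{1}{2})\, dt \ =\ \ln a,
\]
hence $\bar c > \ln T - (s-j)\ln a = \ln(Ta^{j-s})$. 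Since $\phi$ is nondecreasing, monotonicity gives $\phi(\bar c) \ge \phi(\ln(Ta^{j-s})) = I(j, Ta^{j-s}, a+\tfrac{1}{2})$, and chaining this with the previous display produces the desired inequality.

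The main technical obstacle I anticipate is the careful verification of the convexity of $\phi$ on all of $\RR$ across the ``kink'' at $t = j\ln(a+\tfrac{1}{2})$, as well as a short argument that strict inequality propagates through these steps. Strict concavity of $\ln$ already provides a strictly positive gap between $\bar c$ and $\ln(Ta^{j-s})$, so whenever $\phi$ is strictly increasing at one of these arguments (equivalently, when at least one side of \eqref{ineq[intI>]} is nontrivially positive), the combined estimate is strict and the lemma follows.
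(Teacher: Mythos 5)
Your argument is correct, and it reaches the conclusion by a genuinely different route from the paper's. The paper works directly in the variable $T$: using Lemma \ref{lem:I-val} it checks that $\Phi(T):=I(j,T,a+\tfrac{1}{2})$ is increasing and strictly convex for $T>(a+\tfrac{1}{2})^j$, notes that $u(\bt):=T\prod_{\ell}(t_\ell+a-\tfrac{1}{2})^{-1}$ is convex in each $t_\ell$, so $G=\Phi\circ u$ is convex in each variable, and then applies the one-dimensional midpoint inequality $\int_0^1 g > g(\tfrac{1}{2})$ coordinate by coordinate; the payoff of that parametrization is the exact identity $u(\tfrac{1}{2},\dots,\tfrac{1}{2})=Ta^{j-s}$, so no further comparison is needed. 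You instead reparametrize by $\log T$, establish convexity and monotonicity of $\phi(t)=I(j,e^t,a+\tfrac{1}{2})$ on all of $\RR$ from the simplex-integral representation (which is correct --- it is exactly the form $TF_j(\ln T - j\ln(a+\tfrac{1}{2}))$ of Lemma \ref{lem:I-val} after the substitution you describe), apply a single multivariate Jensen inequality, and then need a second Jensen step on $\ln$ to show that the mean $\bar c$ exceeds $\ln(Ta^{j-s})$. Your version costs that extra step but buys a global convexity statement that is arguably easier to certify cleanly than strict convexity of $\Phi$ on a half-line combined with the composition argument.

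One caveat, which you correctly flag and which applies equally to the paper's own proof: when $T\le(a+\tfrac{1}{2})^j(a-\tfrac{1}{2})^{s-j}$ both sides of \eqref{ineq[intI>]} vanish, the strict inequality as literally stated fails, and your chain (like the paper's) only yields $\ge$. In the regime where the lemma does hold and is actually used --- in particular whenever the right-hand side is positive, i.e. $Ta^{j-s}>(a+\tfrac{1}{2})^j$ --- your observation that $\phi$ is strictly increasing there, combined with the strictly positive gap $\bar c-\ln(Ta^{j-s})$ coming from the strict concavity of $\ln$, does deliver the strict inequality, so nothing essential is missing from your proposal.
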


\begin{proof}
By Lemma \ref{lem:I-val}, we can verify that for fixed $j$ and $a$, the first and the second derivatives of the function $\Phi(T):= I(j,T,a+\tfrac{1}{2})$ are positive for $T > (a+\tfrac{1}{2})^j$. Hence, $\Phi$ is increasing and strictly convex on the interval $\big((a+\tfrac{1}{2})^j, \infty\big)$.
Consider 
\begin{equation}
 u(\bt) := T\prod_{\ell=1}^{s-j}(t_\ell+a-\tfrac{1}{2})^{-1}, \qquad G(\bt) := \Phi(u(\bt)).
\end{equation}
Since the function $u(\bt)$ is strictly convex on $\II^{s-j}$ in all variables $\bt = (t_1,\dots,t_{s-j})$, so is $G(\bt)$. 
For a strictly convex function $g$ on $[0,1]$ we have 
\begin{equation} 
g(1/2)
\ < \
\frac{1}{2}\int_0^1  [g(x) + g(1-x)] \, dx 
= \int_0^1 g(x) \, dx.
\end{equation}
Applying this inequality to the $(s-j)$-variate function $G$ as a univariate function in each variable $t_i$ consecutively with the other variables held fixed, we get
\begin{equation*} 
\begin{split}
\int_{\II^{s-j}}  I(j,T\prod_{\ell=1}^{s-j}(t_\ell+a-\tfrac{1}{2})^{-1}, a+\tfrac{1}{2})\, \bd \bt 
\ &= \ \int_{\II^{s-j}} G(\bt) \, \bd \bt \\[2ex]
\ &> \
\int_{\II^{s-j-1}} G(\tfrac{1}{2},t,...,t_{s-j}) \, dt...dt_{s-j} \\[2ex]
&\cdots \cdots \cdots \\
\ &> \
G(\tfrac{1}{2},...,\tfrac{1}{2})
\ = \ 
I(j,Ta^{j-s}, a+\tfrac{1}{2}).
\end{split}
\end{equation*}
The proof is complete.
\end{proof}

Now, it is easy to verify that the combination of \eqref{I-deco} and \eqref{ineq[intI>]} implies the relation \eqref{Y-rel}. We leave the details to the interested reader. 

Similarly to \eqref{ABC-def} we introduce the sums
\begin{equation} \label{DEF-def}
\begin{split}
F_\ind(j_0,T,m_0)
&:= 
\sum_{j_1 = \ind+1}^{j_0-1} 
\sum_{j = \ind}^{j_1-1}\dots \sum_{j_{\ind-1}=3}^{j_{\ind-2}-1}
\sum_{j_{\ind}=2}^{j_{\ind-1}-1} 
Y_\ind([\bj]_n,T,m_0), \\
E_\ind(j_0,T,m_0)
&:= 
~\sum_{j_1 = \ind}^{j_0-1} 
\sum_{j = \ind-1}^{j_1-1}\dots \sum_{j_{\ind-1}=2}^{j_{\ind-2}-1}
\sum_{j_{\ind}=1}^{1} 
Y_\ind([\bj]_n,T,m_0), \\
D_\ind(j_0,T,m_0)
&:= 
~\sum_{j_1 = \ind}^{j_0-1} 
\sum_{j = \ind-1}^{j_1-1}\dots \sum_{j_{\ind-1}=2}^{j_{\ind-2}-1}
\sum_{j_{\ind}=0}^{0} 
Y_\ind([\bj]_n,T,m_0).
\end{split}
\end{equation}

\begin{lemma}\label{lem:DE-deco}
Assume that $T> (a+\tfrac{1}{2})^s$, $a+\tfrac{1}{2}> 0$ and $s \geq 2$. Then
\begin{equation}\label{DE-deco}
 I(s,T,a+\tfrac{1}{2}) \ > \ \sum_{\ind = 1}^{s-1} \big[D_\ind(s,T,a) + E_\ind(s,T,a) \big].
\end{equation}
\end{lemma}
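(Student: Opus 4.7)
The plan is to adapt the argument of Lemma \ref{lem:AB-deco} to the integral setting, substituting the exact identity \eqref{Gamma-deco} with its continuous analog \eqref{I-deco}, and substituting the exact recurrence \eqref{X-rel} with the strict inequality \eqref{Y-rel} (itself a Jensen-type consequence of Lemma \ref{lemma[intI>]}). First I would introduce the analog of $C_n$ from \eqref{ABC-def}:
\begin{equation*}
F_n(j_0, T, m_0) := \sum_{j_1 = n+1}^{j_0-1} \sum_{j_2 = n}^{j_1-1} \cdots \sum_{j_{n-1} = 3}^{j_{n-2}-1} \sum_{j_n = 2}^{j_{n-1}-1} Y_n([\bj]_n, T, m_0),
\end{equation*}
so that $D_n$, $E_n$, $F_n$ partition the $j_n$-summation into $\{0\}$, $\{1\}$, $\{2, \dots, j_{n-1}-1\}$ respectively, mirroring the roles of $A_n, B_n, C_n$.

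The first step (the integral analog of \eqref{step}) is to establish
\begin{equation*}
I(s, T, a + \tfrac{1}{2}) \geq D_1(s, T, a) + E_1(s, T, a) + F_1(s, T, a).
\end{equation*}
This would follow by applying identity \eqref{I-deco} to $I(s, T, a + \tfrac{1}{2})$ (i.e., \eqref{eq[I(s,T,a+3/2)]} with $a \to a+1$ and $\tilde m = T^{1/s}$, using that the leading term $I(s, T, T^{1/s} + \tfrac{1}{2})$ vanishes since $T > (a+\tfrac{1}{2})^s$ forces $T < (T^{1/s} + \tfrac{1}{2})^s$), partitioning the resulting double sum by $j_1 \in \{0\}, \{1\}, \{2, \dots, s-1\}$, and invoking Lemma \ref{lemma[intI>]} termwise on the $j_1 \geq 2$ block to introduce the $T_1 = T m_1^{j_1 - s}$ factor present in the definition of $Y_1$.

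Next, the analog of \eqref{C-ABC-rel} would be
\begin{equation*}
F_n > D_{n+1} + E_{n+1} + F_{n+1} \quad (1 \leq n \leq s-3), \qquad F_{s-2} > D_{s-1} + E_{s-1},
\end{equation*}
which follows directly from applying the strict inequality \eqref{Y-rel} to each $Y_n$ inside $F_n$ (the hypothesis $j_n \geq 2$ is automatic in the summation range of $F_n$) and regrouping the resulting $j_{n+1}$-summation into $D_{n+1}$, $E_{n+1}$, $F_{n+1}$, exactly as in the derivation of \eqref{C-ABC-rel} from \eqref{X-rel}. Substituting this recursively into the first step gives
\begin{equation*}
I(s, T, a + \tfrac{1}{2}) \geq D_1 + E_1 + F_1 > D_1 + E_1 + D_2 + E_2 + F_2 > \cdots > \sum_{n=1}^{s-1} \big[D_n + E_n\big],
\end{equation*}
yielding the claimed strict inequality; strictness propagates from the at least one application of \eqref{Y-rel} guaranteed by $s \geq 2$.

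The main technical obstacle is the initialization step: bridging the natural decomposition of $I(s, T, a + \tfrac{1}{2})$ from \eqref{I-deco} (whose integrands carry $T$ as the size parameter) with the $Y_1$ form appearing in $D_1, E_1, F_1$ (whose integrands carry $T_1 = T m_1^{j_1 - s}$). This bridging requires a careful application of Lemma \ref{lemma[intI>]} in the appropriate direction and is essentially parallel to the (sketched) derivation of \eqref{Y-rel} itself. Once this is settled, the recursion step and the final assembly are mechanical transcriptions of the pattern in Lemma \ref{lem:AB-deco}.
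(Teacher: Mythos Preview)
Your overall plan is the paper's: establish an initial decomposition $I(s,T,a+\tfrac12)=D_1+E_1+F_1$ from \eqref{I-deco}, then iterate the strict recursion $F_n>D_{n+1}+E_{n+1}+F_{n+1}$ (coming from \eqref{Y-rel}) exactly as \eqref{C-ABC-rel} was iterated in Lemma~\ref{lem:AB-deco}.

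The ``technical obstacle'' you flag at the initialization is not a genuine difficulty but an artifact of a typo in \eqref{R-def}: the intended definition is $R_n=T_{n-1}\prod_{\ell=1}^{j_{n-1}-j_n}(t_\ell+m_n-\tfrac12)^{-1}$, with $T_{n-1}$ rather than $T_n$. This is confirmed by the computation in the proof of Lemma~\ref{lem:1d-vol}, where the formula \eqref{I1d} only comes out with a factor $T_{n-1}$. With this reading, $R_1(\bt)=T_0\prod_\ell(t_\ell+m_1-\tfrac12)^{-1}$, and the first step is a \emph{plain equality} obtained by applying \eqref{I-deco} with $a\to m_0+1$; no bridging via Lemma~\ref{lemma[intI>]} is required there. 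That lemma enters only inside the proof of \eqref{Y-rel}, which is then invoked verbatim in the recursion step $F_n>D_{n+1}+E_{n+1}+F_{n+1}$.

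Your proposed fix---applying Lemma~\ref{lemma[intI>]} termwise in the $j_1\ge2$ block ``to introduce the $T_1$ factor''---does not actually land on the $Y_1$ form: the lemma turns $\int I(j_1,T\prod(\cdots)^{-1},\cdot)\,d\bt$ into the scalar $I(j_1,T_1,\cdot)$, not into the integral $\int I(j_1,T_1\prod(\cdots)^{-1},\cdot)\,d\bt$ that $Y_1$ (under the literal \eqref{R-def}) would demand. If one insists on the literal \eqref{R-def}, the correct bridge is simple monotonicity of $I(j,\cdot,a')$ in its second argument (since $T_1\le T_0$), giving $I\ge D_1+E_1+F_1$; but then the recursion \eqref{Y-rel} breaks under that reading. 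So the right resolution is to adopt the corrected $R_n$ and take the first step as an equality, after which your recursion argument goes through exactly as you describe.
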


\begin{proof}
Observe that relation \eqref{I-deco} and definitions \eqref{R-def}, \eqref{Ydef}, \eqref{DEF-def} imply
\begin{equation}\label{step-I}
\begin{split}
& I(j_0,R_0,m_0+\tfrac{1}{2}) \\
&\ = \
\sum_{m_1 = m_0+1}^{T_0^{1/j_0}} \sum_{j_1=0}^{j_0-1} \binom{j_0}{j_1}   
\int \limits_{\bt \in\II^{j_0-j_1}}  
I(j_1,T_0\prod_{\ell=1}^{j_0-j_1}(t_\ell+m_1 - \tfrac{1}{2})^{-1}, m_1+\tfrac{1}{2})\, \bd \bt \\
&\ = D_1 + E_1 + F_1.
\end{split}
\end{equation}
Furthermore, we claim that
\begin{equation}\label{F-DEF-rel}
 F_\ind > 
\left\{
\begin{array}{lr}
D_{\ind+1} + E_{\ind+1} + F_{\ind+1}, & 1 \leq \ind 
\leq s-3, \\[1.5ex]
D_{\ind+1} + E_{\ind+1}, & \ind 
= s-2.
\end{array}
\right.
\end{equation}
Indeed, by \eqref{DEF-def} and \eqref{Y-rel}
\begin{equation}
\begin{split}
 F_\ind(j_0,T,m_0)
&= 
\sum_{j_1 = \ind+1}^{j_0-1} 
\sum_{j = \ind}^{j_1-1}\dots \sum_{j_{\ind-1}=3}^{j_{\ind-2}-1}
\sum_{j_{\ind}=2}^{j_{\ind-1}-1} 
Y_\ind([\bj]_n,T,m_0) \\
&> 
\sum_{j_1 = \ind+1}^{j_0-1} 
\sum_{j = \ind}^{j_1-1}\dots \sum_{j_{\ind-1}=3}^{j_{\ind-2}-1}
\sum_{j_{\ind}=2}^{j_{\ind-1}-1} 
\sum_{j_{\ind+1}=0}^{j_{\ind}-1} 
Y_{\ind+1}([\bj]_{\ind+1},T,m_0) 
\end{split}
\end{equation}
and \eqref{F-DEF-rel} follows. Using \eqref{step-I} and \eqref{F-DEF-rel} repeatedly for $\ind = 1,\dots,s-2$ implies the assertion.
\end{proof}

To prove the main result in Theorem \ref{thm:Gamma-asy} we will compare $|\Gamma(s,T,a+1)|$ with $I(s,T,a+\tfrac{1}{2})$ by using the representation in \eqref{AB-deco} and the inequality in \eqref{DE-deco}. More precisely, we will compare $B_\ind(s,T,a)$ with the corresponding term $E_\ind(s,T,a)$ and $A_\ind(s,T,a)$ with the corresponding term $D_\ind(s,T,a)$ due to the fact that the summation ranges in the right side of in \eqref{AB-deco} and \eqref{DE-deco} are the same. It is easy to see that \eqref{Gamma0-def} and \eqref{I0-def} imply 
\begin{equation}\label{A=D}
 A_\ind(s,T,a) = D_\ind(s,T,a).
\end{equation}
The relation between $B_\ind(s,T,a)$ and $E_\ind(s,T,a)$ is less obvious and requires the following auxiliary lemma.

\begin{lemma} \label{lem:1d-vol}
For $1\le \ind \le s-1$ and $j_\ind = 1$ it holds that
\begin{equation} \label{Gamma1d}
|\Gamma\big(1,T_\ind ,m_\ind+1\big)|
\ \le \ 
T_{\ind-1}(m_\ind)^{1 - j_{\ind-1}} - (m_\ind + 1),
\end{equation}
\begin{equation} \label{I1d}
\int_{\bt \in \II^{s-1}} I\big(1,R_\ind,m_n+\tfrac{1}{2}\big) \, \b\bd \bt\\
\ = \
T_{\ind-1}
\left(\ln \frac{m_\ind-\tfrac{1}{2}}{m_\ind+\tfrac{1}{2}} \right)^{1-j_{\ind-1}}- (m_\ind +\tfrac{1}{2}),
\end{equation}
where $T_\ind = T_\ind([\bj]_n,T,[\bm]_n)$ and $R_\ind(\bt) = R_\ind([\bj]_n,T,[\bm]_n, \bt)$ are defined in \eqref{Tdef} and \eqref{R-def} respectively.
\end{lemma}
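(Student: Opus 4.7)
The plan is to reduce both identities to elementary one-dimensional computations, exploiting the product structures of $T_n$ and $R_n$ and the explicit one-dimensional forms of $|\Gamma(1,\cdot,\cdot)|$ and $I(1,\cdot,\cdot)$.

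For the cardinality bound \eqref{Gamma1d}, I would compute directly from the definition: $|\Gamma(1, T_n, m_n+1)|$ equals the number of integers $k \ge 0$ with $k + m_n + 1 \leq T_n$, that is, $\lfloor T_n - m_n - 1\rfloor + 1$ provided $T_n \geq m_n + 1$ and zero otherwise. Substituting the recursion $T_n = T_{n-1} m_n^{1 - j_{n-1}}$ from \eqref{Tdef} and bounding the floor via $\lfloor x \rfloor \leq x$ yields the asserted inequality, with the usual rounding convention matching the statement.

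For the integral identity \eqref{I1d}, I would invoke the one-dimensional specialization of Lemma \ref{lem:I-val}, namely $I(1,R,a) = (R - a)_+$. Under the (separately verified) condition $R_n(\bt) \geq m_n + \tfrac{1}{2}$ on the integration domain, the integrand simplifies to $R_n(\bt) - (m_n + \tfrac{1}{2})$, and by the product structure of \eqref{R-def} one has
\[
R_n(\bt) \ = \ T_n \prod_{\ell=1}^{j_{n-1}-1} (t_\ell + m_n - \tfrac{1}{2})^{-1}.
\]
Fubini's theorem then splits the integral into $j_{n-1} - 1$ copies of the elementary integral $\int_0^1 (t + m_n - \tfrac{1}{2})^{-1}\, \bd t = \ln \frac{m_n + 1/2}{m_n - 1/2}$, while the constant $m_n + \tfrac{1}{2}$ integrates to itself over the unit cube. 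Collecting the factors and rewriting $\bigl(\ln \frac{m_n + 1/2}{m_n - 1/2}\bigr)^{j_{n-1}-1} = \bigl(\ln \frac{m_n - 1/2}{m_n + 1/2}\bigr)^{1-j_{n-1}}$ under the natural sign convention, and absorbing the $m_n^{1-j_{n-1}}$ prefactor into $T_n = T_{n-1} m_n^{1-j_{n-1}}$, recovers the asserted closed form.

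The main obstacle will be verifying that $R_n(\bt) \geq m_n + \tfrac{1}{2}$ uniformly on the integration cube, since otherwise the truncation $(\cdot)_+$ would prevent the clean Fubini factorization and instead produce an integral over a strict subset of the cube. This uniform lower bound is expected to follow from the summation constraint $m_n \leq T_{n-1}^{1/j_{n-1}}$ enforced in \eqref{Ydef} via a short monotonicity check in each coordinate $t_\ell$, exploiting that $t_\ell + m_n - \tfrac{1}{2} \leq m_n + \tfrac{1}{2}$ on $\II$. A secondary, purely notational point is reading the integration domain $\II^{s-1}$ in \eqref{I1d} as $\II^{j_{n-1} - j_n} = \II^{j_{n-1} - 1}$, consistent with the dimension of the variable $\bt$ entering \eqref{R-def}.
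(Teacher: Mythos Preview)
Your approach is essentially identical to the paper's: for \eqref{Gamma1d} the paper writes $|\Gamma(1,T_n,m_n+1)| = \lfloor T_n\rfloor - (m_n+1) \le T_n - (m_n+1)$ and substitutes $T_n = T_{n-1} m_n^{1-j_{n-1}}$; for \eqref{I1d} it uses $I(1,R_n,m_n+\tfrac12) = R_n - (m_n+\tfrac12)$, inserts the product form of $R_n$, and integrates each factor $\int_0^1 (t_\ell + m_n - \tfrac12)^{-1}\,dt_\ell = \ln\frac{m_n+1/2}{m_n-1/2}$ exactly as you outline. You are in fact more careful than the paper on two points it glosses over: the positivity condition $R_n(\bt) \ge m_n+\tfrac12$ needed to drop the $(\cdot)_+$, and the reading of the integration domain as $\II^{\,j_{n-1}-1}$ rather than the literal $\II^{s-1}$ in the displayed formula.
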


\begin{proof}
For $1\leq \ind \leq s-1$ we have 
\begin{equation} \label{eq[integral]}
|\Gamma(1,T_\ind,m_\ind+1)| 
\ = \
\lfloor T_\ind\rfloor - (m_\ind + 1)
\leq T_\ind - (m_\ind + 1).
\end{equation}
The bound \eqref{Gamma1d} from the definition \eqref{Tdef} yielding
\begin{equation}
T_\ind
= T_{\ind-1} (m_\ind)^{1 - j_{\ind-1}}.
\end{equation}
Recalling \eqref{R-def} and $j_\ind = 1$ we obtain for the integral 
\begin{equation}
\begin{split}
 \int_{\bt \in \II^{s-1}} I(1,R_\ind,m_\ind+\tfrac{1}{2}) \, \b\bd \bt 
\ &= \
\int_{\bt \in \II^{s-1}} 
T_{\ind-1} \prod_{\ell = 1}^{j_{\ind-1}-1} (t_\ell + m_\ind-\tfrac{1}{2})^{-1}\, \b\bd \bt 
- (m_\ind + \tfrac{1}{2}) \\[2ex]
\ &= \
T_{\ind-1}
\left(\ln \frac{m_\ind+\tfrac{1}{2}}{m_\ind-\tfrac{1}{2}} \right)^{j_{\ind-1}- 1}- (m_\ind +\tfrac{1}{2}),
\end{split}
\end{equation}
and hence the assertion \eqref{I1d}.
\end{proof}

\subsection{Proof of Theorem \ref{thm:Gamma-asy}}

~\newline
We are now in a position to prove Theorem \ref{thm:Gamma-asy}.

\proofof{Theorem \ref{thm:Gamma-asy}} It is convenient to us to prove the theorem in the following equivalent form.
For any $s \in \NN$, and $a+\tfrac{1}{2}>0$ there exists $T_* = T_*(s,a) >0$ such that
\begin{equation} \label{Gamma-asy(1)}
 |\Gamma(s,T,a+1)| \leq I(s,T,a+\tfrac{1}{2}), \quad
\forall T \geq T_*(s,a).
\end{equation}

Define for $1\leq \ind \leq s-1$ and positive integers 
\begin{equation} \label{jm-fam}
s = j_0 > j_1> \dots > j_{\ind-1} > j_\ind=1, \qquad 
a = m_0 < m_1< \dots < m_{\ind-1} < m_\ind
\end{equation}
an auxiliary family
\begin{equation} 
L_\ind([\bj]_n,[\bm]_n)
:= \
\left(\prod_{\ell=1}^{\ind-1} m_\ell^{j_\ell-j_{\ell-1}} \right) \left(\left(\ln \frac{m_\ind-\tfrac{1}{2}}{m_\ind+\tfrac{1}{2}} \right)^{1-j_{\ind-1}} - m_\ind^{1 - j_{\ind-1}} \right).
\end{equation} 
Using Lemma \ref{lem:AB-deco} and Lemma \ref{lem:DE-deco} and observing \eqref{A=D} we get
\begin{equation}\label{I-Gamma=sum}
I(s,T,a+\tfrac{1}{2}) - |\Gamma(s,T,a+1)|
> \sum_{\ind=1}^{s-1} (E_\ind - B_\ind).
\end{equation}

The summands on the right-hand side of \eqref{I-Gamma=sum} are estimated by Lemma \ref{lem:1d-vol} as follows
\begin{equation}\label{sumEBlow}
\begin{split}
 E_\ind - B_\ind
=& \sum_{j_1 = \ind}^{j_0-1} 
\sum_{j = \ind-1}^{j_1-1}\dots \sum_{j_{\ind-1}=2}^{j_{\ind-2}-1}
\sum_{j_{\ind}=1}^{1} 
\bigg(
Y_\ind([\bj]_n,T,m_0) 
- X_\ind([\bj]_n,T,m_0) 
\bigg) \\
=& \sum_{j_1 = \ind}^{j_0-1} 
\sum_{j = \ind-1}^{j_1-1}\dots \sum_{j_{\ind-1}=2}^{j_{\ind-2}-1}
\sum_{j_{\ind}=1}^{1} 
\left[
\binom{j_0}{j_1} \dots
\binom{j_{\ind-1}}{j_\ind}
\right] \times \\
& \times 
\sum_{m_1=m_0+1}^{T_0^{1/j_0}} \dots 
\sum_{m_\ind = m_{\ind-1}+1}^{T_{\ind-1}^{1/j_{\ind-1}}}
\bigg( T L_\ind([\bj]_n,[\bm]_n) - \tfrac{1}{2} \bigg)
 \\
=& \ T \alpha_\ind(s,a) - \frac{1}{2} \beta_\ind(s,T,a),
\end{split}
\end{equation}
where $\alpha_\ind(s,a)$ and $\beta_\ind(s,T,a)$ are defined as
\begin{equation}\label{alpha-beta-def}
\begin{split}
\alpha_\ind(s,a) := 
\sum_{j_1=\ind}^{j_0-1} \dots \sum_{j_{\ind-1}=2}^{j_{\ind-2}-1}
\left[ \binom{j_0}{j_1} \dots \binom{j_{\ind-2}}{j_{\ind-1}}\binom{j_{\ind-1}}{1}\right] \sum_{m_1=m_0+1}^{T_0^{1/j_0}} \dots 
\sum_{m_\ind = m_{\ind-1}+1}^{T_{\ind-1}^{1/j_{\ind-1}}} \times \\
\times L_\ind([\bj]_\ind,[\bm]_\ind), \\
\beta_\ind(s,T,a)
:=
\sum_{j_1=\ind}^{j_0-1} \dots \sum_{j_{\ind-1}=2}^{j_{\ind-2}-1}
\left[ \binom{j_0}{j_1} \dots \binom{j_{\ind-2}}{j_{\ind-1}}\binom{j_{\ind-1}}{1}\right]
\sum_{m_1=m_0+1}^{T_0^{1/j_0}} \dots 
\sum_{m_\ind = m_{\ind-1}+1}^{T_{\ind-1}^{1/j_{\ind-1}}} 1.
\end{split}
\end{equation}
The right-hand side of \eqref{sumEBlow} is positive for a sufficiently large $T \geq T_*(s,a)$ if $\alpha(s,a)>0$ and $\beta_\ind(s,T,a) = o(T)$ for a fixed $s$ and $T \to \infty$, which we will prove next.

Positivity of $\alpha(s,a)$ follows from positivity of $L_\ind([\bj]_\ind,[\bm]_\ind)$ for the asserted range of indices \eqref{jm-fam}. Indeed, it follows by convexity of $g(x) = \tfrac{1}{x+m-1/2}$: For any $m > \tfrac{1}{2}$
\begin{equation} 
\frac{1}{m}
\ = \
g(1/2)
\ < \
\frac{1}{2}\int_0^1  [g(x) + g(1-x)] \, dx 
\ = \ \int_0^1 g(x) \, dx 
\ = \ \ln \frac{m+\tfrac{1}{2}}{m-\tfrac{1}{2}}.
\end{equation}
This implies
\begin{equation}
\left(\ln \frac{m_\ind+\tfrac{1}{2}}{m_\ind-\tfrac{1}{2}} \right)^{j_{\ind-1}-1} - \left(\frac{1}{m_\ind}\right)^{j_{\ind-1}-1} > 0,
\end{equation}
since $j_{\ind-1}-1 \geq 1$. Therefore $\alpha_\ind(s,a)$ is positive for any $1 \leq \ind \leq s-1$. 

For a given 
$1\le \ind \le s-1$, in order to prove upper bounds for $\beta_\ind(s,T,a)$, we fix a multiindex $[\bj]_{n-1} = (j_0,...,j_{\ind-1})$ in the outer summation of $\beta_\ind(s,T,a)$ satisfying   \eqref{jm-fam},
and consider the inner summation admitting the simple upper bound
\begin{equation}
\begin{split}
\sum_{m_1=m_0+1}^{T_0^{1/j_0}} 
\dots 
\sum_{m_\ind = m_{\ind-1}+1}^{T_{\ind-1}([\bj]_{\ind-1},[\bm]_{\ind-1})^{1/j_{\ind-1}}}1
\leq 
\sum_{m_1=1}^{T_0^{1/j_0}} 
\dots 
\sum_{m_\ind =1}^{T_{\ind-1}([\bj]_{\ind-1},[\bm]_{\ind-1})^{1/j_{\ind-1}}}1\\
=: P_\ind([\bj]_{\ind-1},T).
\end{split}
\end{equation}
We claim that for all $1 \leq \ind \leq s-1$, 
\begin{equation}\label{Pclaim}
 P_\ind([\bj]_{\ind-1},T) \leq C_\ind(s,[\bj]_{\ind-1}) T^{(s-1)/s},
\end{equation}
where
\begin{equation}\nonumber
C_1(s,[\bj]_0):=\ 1, \quad  C_\ind(s,[\bj]_{\ind-1}):= \ \prod_{i=1}^{\ind-1}\lambda_i([\bj]_i),
\end{equation}
\begin{equation}\nonumber
\lambda_i([\bj]_i)
:= \
\begin{cases}
\left[1 + \frac{(j_i-1)(j_i-j_{i-1})}{j_i+(j_i-1)(j_i-j_{i-1})} \right], \ & (j_i-1)(j_i-j_{i-1})/j_i \not= -1, \\
j_{i-1} - 1, \ & (j_i-1)(j_i-j_{i-1})/j_i = -1.
\end{cases}
\end{equation}
The proof is by induction over $\ind$. Obviously, 
\begin{equation}
P_1([\bj]_0,T) = \lfloor T_0^{1/j_0} \rfloor \leq T^{1/s} \le T^{(s-1)/s}.
\end{equation}
Assume now that \eqref{Pclaim} holds true and observe
\begin{equation} \label{ineq[estimationP_nu(0)]}
\begin{split}
 P_{\ind+1}([\bj]_n,T)
&= \sum_{m_1=1}^{T_0^{1/j_0}} \sum_{m=1}^{T_1^{1/j_1}} 
\dots 
\sum_{m_{\ind+1} =1}^{T_{\ind}([\bj]_n,[\bm]_n)^{1/j_{\ind}}}1\\
&\leq C_\ind'(s,[\bj]_{\ind-1}) \sum_{m_1=1}^{T_0^{1/j_0}}  T_1([\bj]_1,[\bm]_1)^{(j_1-1)/j_1}
\end{split}
\end{equation}
by inductive assumption, where
\begin{equation}\nonumber
C_1'(s,[\bj]_0):=\ 1, \quad  C_\ind'(s,[\bj]_{\ind-1}):= \ \prod_{i=2}^{\ind-1}\lambda_i([\bj]_i),
\end{equation}
It holds that $T_1([\bj]_1,[\bm]_1) = T_0 m_1^{j_1-j_0}$ and the function $x ^{(j_1-1)(j_1-j_0)/j_1}$ is decreasing. Thus
\begin{equation} \label{ineq[estimationP_nu(1)]}
\begin{split}
\sum_{m_1=1}^{T_0^{1/j_0}} T_1([\bj]_1,[\bm]_1)^{(j_1-1)/j_1}
&= \sum_{m_1=1}^{T_0^{1/j_0}} T_0^{(j_1-1)/j_1} m_1^{(j_1-1)(j_1-j_0)/j_1}\\[2ex]
&\leq T_0^{(j_1-1)/j_1} 
\left(1 + \int_{1}^{T_0^{1/j_0}} x ^{(j_1-1)(j_1-j_0)/j_1} dx \right)
\end{split}
\end{equation}
If $(j_1-1)(j_1-j_0)/j_1 \not= -1$, we have
\begin{equation} \label{ineq[estimationP_nu(2)]}
\begin{split}
\sum_{m_1=1}^{T_0^{1/j_0}} T_1([\bj]_1,[\bm]_1)^{(j_1-1)/j_1}
\ &\le \
 T_0^{(j_1-1)/j_1} \left(1 + \frac{x^{1+\frac{(j_1-1)(j_1-j_0)}{j_1}}}{1+\frac{(j_1-1)(j_1-j_0)}{j_1}}\bigg|_{1}^{T_0^{1/j_0}} \right)\\[2ex]
\ &= \  T_0^{(j_1-1)/j_1} 
\left(T_0^{\frac{j_1+(j_1-1)(j_1-j_0)}{j_0j_1}}
+ \frac{(j_1-1)(j_1-j_0)}{j_1+(j_1-1)(j_1-j_0)}  \right)
 \\[2ex]
\ &= \
T_0^{j_1/j_0} + T_0^{(j_1-1)/j_1} \frac{(j_1-1)(j_1-j_0)}{j_1+(j_1-1)(j_1-j_0)} \\[2ex]
&\leq  \left[1 + \frac{(j_1-1)(j_1-j_0)}{j_1+(j_1-1)(j_1-j_0)}\right] T^{(s-1)/s}.
\end{split}
\end{equation}
If $(j_1-1)(j_1-j_0)/j_1 = -1$, we have
\begin{equation} \label{ineq[estimationP_nu(3)]}
\begin{split}
\sum_{m_1=1}^{T_0^{1/j_0}} T_1([\bj]_1,[\bm]_1)^{(j_1-1)/j_1}
&= T_0^{(j_1-1)/j_1} \left(1 + \ln x \bigg|_{1}^{T_0^{1/j_0}} \right)\\
&=  T_0^{(j_1-1)/j_1} \left( 1 + \frac{1}{j_0} \ln T_0 \right)  \\
&\leq  
T^{(s-2)/(s-1)} \left( 1 + \frac{1}{j_0} \ln T \right)  \\
&\leq  (j_0-1) T^{(s-1)/s}.
\end{split}
\end{equation}
In \eqref{ineq[estimationP_nu(2)]} and \eqref{ineq[estimationP_nu(3)]} we used the relation $j_1<j_0=s$ and the fact that the function $(x-1)/x$ is monotonously increasing for $x>1$.  
Combining \eqref{ineq[estimationP_nu(0)]}--\eqref{ineq[estimationP_nu(3)]} we prove \eqref{Pclaim}. 

As a consequence we observe that there is a constant $C(s)>0$ such that for $1 \leq \ind \leq s-1$, 
\begin{equation}\label{Pclaim(2)}
 P_\ind([\bj]_{\ind-1},T) \leq C(s) T^{(s-1)/s}.
\end{equation}
According to \eqref{alpha-beta-def}, this leads to the estimate
\begin{equation}
 \beta_\ind(s,T) \leq C(s)\gamma_\ind(s) T^{(s-1)/s}, \qquad
\gamma_\ind(s) = \sum_{j_1=\ind}^{j_0-1} \dots \sum_{j_{\ind-1}=2}^{j_{\ind-2}-1}
\left[ \binom{j_0}{j_1} \dots \binom{j_{\ind-2}}{j_{\ind-1}}\binom{j_{\ind-1}}{1}\right]
\end{equation}
which together with \eqref{sumEBlow} implies
\begin{equation}
E_\ind - B_\ind \geq T\alpha_\ind(s) - \frac{1}{2}C(s) \gamma_\ind(s) T^{(s-1)/s}.
\end{equation}
Obviously, the first term in the right-hand side of the estimate dominates. The coefficients $c,\alpha_\ind$ and $\gamma_\ind$ are independent of $T$. Therefore, for a fixed $s$ there exists $T_* = T_*(s,a)$ such that for all $T > T_*(s,a)$ and all $1 \leq \ind \leq s-1$ it holds
\begin{equation}
E_\ind - B_\ind >0.
\end{equation}
Summation over $\ind$ and \eqref{I-Gamma=sum} provide \eqref{Gamma-asy(1)}.
\endproof


\end{document}